\theoremstyle{plain}
\newtheorem{thm}{Theorem}[section]
\newtheorem{lem}[thm]{Lemma}
\newtheorem{pro}[thm]{Proposition}
\newtheorem{cor}[thm]{Corollary}
\theoremstyle{definition}
\newtheorem{definition}[thm]{Definition} 
\newtheorem{rem}[thm]{Remark}
\newtheorem{exa}[thm]{Example}
\newcommand{\lo}{\prec_l}
\newcommand{\pq}{\prec_q}
\newcommand{\chain}[2]{ \{ {#1}_0\subset  {#1}_1\subset \dots \subset {#1}_{#2} \} }
\newcommand{\citep}{\cite}
\newcommand{\F}{\mathbb{F}}
\newcommand{\Fqn}{\Fq^n}
\newcommand{\Fq}{\F_q}
\newcommand{\<}{\left<}
\renewcommand{\>}{\right>}
\newcommand{\bbZ}{\mathbb{Z}}
\newcommand{\NN}{\mathbb{N}}
\newcommand{\R}{\mathcal{R}}
\newcommand{\B}{\mathcal{B}}
\newcommand{\M}{\mathbf{M}}
\newcommand{\T}{\tau}
\newcommand{\TT}{\mathcal{T}}
\newcommand{\I}{\mathcal{I}}
\newcommand{\0}{\mathbf{0}}
\newcommand{\ZZ}{\mathbb{Z}}
\newcommand{\SE}{\Sigma(E)}
\newcommand{\GrE}{\mathbb{G}_r(E)}
\newcommand{\UU}{\mathfrak{U}}
\newcommand{\VV}{\mathfrak{V}}
\newcommand{\WW}{\mathfrak{W}}
\newcommand{\tovo}[1]{{\color{red}Tovo:  #1}}
\newcommand{\Pos}{\mathcal{P}\kern-1pt os} 
\newcommand{\Sset}{s\mathrm{Set}} 
\newcommand{\Top}{\mathcal{T}\kern-1pt op}
\newcommand{\Ex}{\mathrm{Ex}} 
\newcommand{\Cat}{\mathcal{C}at}
\newcommand{\Deltao}{\mathring{\Delta}}
\newcommand{\redH}{\tilde{\mathrm{H}}}
\newcommand{\calA}{\mathcal{A}}
\newcommand{\calH}{\mathcal{H}}
\newcommand{\bbF}{\mathbb{F}}
\DeclareMathOperator{\id}{id}
\title{Homotopy type of shellable $q$-complexes and their homology groups}
\author[Ghorpade]{Sudhir R. Ghorpade$^1$}
\address[1]{Department of Mathematics, Indian Institute of Technology Bombay, Powai, Mumbai 400076, India}
\email{srg@math.iitb.ac.in}
\thanks{Sudhir Ghorpade is partially supported by the grant DST/INT/RUS/RSF/P-41/2021 from the Department of Science \& Technology, Govt. of India}
\author[Pratihar]{Rakhi Pratihar$^2$}
\email{rakhi.pratihar@inria.fr}
\address[2]{Inria Centre de Saclay, Campus Polytechnique, 91120 Palaiseau, France}
\thanks{During the course of this work, Rakhi Pratihar was supported by Grant 280731 from the Research Council of Norway}
\author[Randrianarisoa]{Tovohery H. Randrianarisoa$^3$}
\email{tovo@aims.ac.za}
\address[3]{Department of Mathematics and Statistics and Mathematical Statistics, Umea University, Umea, Sweden}
\author[Verdure]{Hugues Verdure$^4$}
\email{hugues.verdure@uit.no}
\author[Wilson]{Glen Wilson$^4$}
\email{glen.wilson@uit.no}
\address[4]{Department of Mathematics and Statistics, UiT - The Arctic
University of Norway, Tromsø, Norway }
\begin{document}
\maketitle

\begin{abstract}

    The theory of shellable simplicial complexes brings together combinatorics, algebra, and topology in a remarkable way. Initially introduced by Alder for $q$-simplicial complexes, recent work of Ghorpade, Pratihar, and Randrianarisoa extends the study of shellability to $q$-matroid complexes and determines singular homology groups for a subclass of these $q$-simplicial complexes. In this paper, we determine the homotopy type of shellable $q$-simplicial complexes. Moreover, we establish the shellability of order complexes from \emph{lexicographically} shellable $q$-simplicial complexes, that include the $q$-matroid complexes. This results in a comprehensive determination of the homology groups for any lexicographically shellable $q$-complexes.
    \end{abstract} 

\section{Introduction}
%The notion of shellability derives much of the attention from its implications in different branches of mathematics - combinatorics, commutative algebra, and algebraic topology. Historically, the notion of shellability was first implicitly assumed in the `proof' of the following problem for higher dimensional convex polytopes by Schl{\"a}fli in 1905: the alternate sum of the face numbers is equal to 1, that extends the Euler's formula $v -e +f = 2$ for planar graphs. Though the boundary of a shellability of convex polytope was proved much later by Bruggesser and Mani  in 1971. 

The notion of shellability garnered significant attention owing to its implications in combinatorics, commutative algebra, and algebraic topology.  Historically, shellability was first implicitly assumed by Schl{\"a}fli \cite{Sch} for proving the Euler-Poincar{\'e} formula for higher-dimensional convex polytopes. However, the shellability of the boundary of convex polytopes was formally established after many decades by Bruggesser and Mani \cite{BM} in 1971.

Informally, for an abstract simplicial complex of dimension $d$, shellability gives a method to construct the complex by gluing one facet at a time to the previously constructed subcomplex in a nice way such that their intersection is topologically a $(d-1)-$sphere. From the topological point of view, a shellable simplicial complex is weak homotopy equivalent to a wedge sum of spheres, thus the homology groups are well understood \cite[\S2.3]{Hatcher}. %\cite{BH}
%\glen{Why do we need the citation to Bruns and Herzog for a basic topological fact? The homology groups of a wedge of spheres is something guaranteed by the axiomatization of reduced homology: \cite[\S2.3]{Hatcher}.}
In commutative algebra, shellable simplicial complexes are important, partly because their ``face rings" or Stanley-Reisner rings (over any field) are Cohen-Macaulay. These rings were introduced independently by Hochster and Stanley and they have nice properties. Since Gr\"obner degenerations of the coordinate rings of many classes of algebraic varieties turn out to be Stanley-Reisner rings of simplicial complexes, shellability provides a tool to prove their Cohen-Macaulayness. Motivated by Stanley's work on R-labeling, Bj{\"o}rner introduced the notions of EL-labelling and CL-labelling that give efficient methods for establishing shellability.  Important known classes of shellable simplicial complexes include the boundary complex of a convex polytope \cite{BM}, the order complex of a bounded, locally upper semimodular poset \cite{folkman}, and
matroid complexes, i.e., complexes formed by the independent subsets of matroids \cite{Porvan}. For comprehensive background and the proofs supporting these claims, we refer to the literature, including the books by Stanley \cite{Stan} and Bruns and Herzog \cite{BH}, the compilation of lecture notes in \cite{GSSV}, and Björner's survey article \cite{Bjorner}.

We are interested in $q$-analogs of some of these results wherein finite sets are replaced by finite dimensional vector spaces over finite fields $\Fq$. Our main objects of study are $q$-matroids and $q$-simplicial complexes, which are $q$-analogs of matroids and simplicial complexes, respectively. The $q$-matroids were studied more generally as complementary modular lattices by Crapo \cite{Crapo} and later rediscovered by Jurrius and Pellikaan \cite{JP18} in connection with rank metric codes. On the other hand, the $q$-simplicial complexes found its mention at least in the work of Rota \cite{Rota71} and thereafter, Alder \cite{Alder} studied this objects with more general notion of $q$-simplicial posets. But the very first non-trivial class of shellable $q$-complexes are obtained only recently in \cite{GPR21}, where it is proved that the $q$-matroid complexes, i.e., the $q$-complexes formed by independent spaces of $q$-matroids, are shellable. Furthermore, in \cite{GPR21}, the use of shellability to determine the reduced singular homology  groups has been extended to some classes of $q$-matroid complexes including the $q$-spheres. The results (e.g., \cite[Theorems 5.11, 6.10]{GPR21}) show that there is only one non-trivial homology group at the top degree similar to the classical case of shellable simplicial complexes. This led us to consider the following questions:

\textbf{Question 1:} Does every $q$-matroid complex have only one non-trivial reduced singular homology group? Determine the reduced singular homology groups for any arbitrary $q$-matroid complex.

\textbf{Question 2:} What is the homotopy type of a $q$-matroid complex, more generally, of an arbitrary shellable $q$-complex?

 We also mention that as explained in \cite{GPR21}, studying the topology of shellable $q$-complexes appears to have potential applications to $q$-matroids and rank metric codes. For additional insights on this matter, we refer to \cite[Remark 6.12]{GPR21} and the open question in section \ref{sec:5} at the end of this paper.
%Folkman proved the Cohen-Macaulay property of geometric lattices \cite{folkman} in 1966 conjectured by Rota in 1964.  

%Hochster and Stanley independently introduced the powerful notion of the “face ring” of a simplicial complex that helped in proving the Upper Bound Conjecture by Stanley in 1975.

In this paper, we give complete answers to both the questions above. For determining the reduced singular homology of an arbitrary $q$-matroid complex, we still make use of shellability, but not that of the $q$-matroid complexes. Rather, we prove that the order complexes associated to $q$-matroid complexes are shellable as simplicial complexes. Thus following the classical method of determining reduced homology of shellable simplicial complexes, we give a complete combinatorial description of the reduced homology of the associated order complexes by counting certain maximal chains. The weak homotopy equivalence between the topological spaces associated to $q$-matroid complexes and their corresponding simplicial complexes implies that they have the same reduced singular homology groups. We make use of this result on weak homotopy equivalence to prove that the homotopy type of an arbitrary shellable $q$-simplicial complex is that of a wedge sum of spheres.

The significance of our main results are threefold: The shellability of order complexes associated to $q$-matroid complexes shows that the $q$-posets underlying $q$-matroid complexes are Cohen-Macaulay posets \cite{bjornerlexi} (See Remark \ref{CMposets} for more on this). Secondly, the homotopy type reveals the topological structure of \emph{any} shellable $q$-complexes, not necessarily the lexicographically shellable ones. Finally, we completely determine the singular homology of $q$-matroid complexes, of which partial results were obtained in \cite{GPR21}. 

 The paper is organized as follows. The subsequent section is divided into two subsections: in subsection \ref{subsec:2.1}, we provide some preliminaries about $q$-complexes and $q$-matroids and review the findings on the shellability of $q$-matroid complexes from \cite{GPR21}. Subsection \ref{subsec:2.2} focuses on proving the shellability of order complexes associated with $q$-matroid complexes. In section \ref{Sec:homotopy}, we consider various topological spaces that can be associated to a $q$-simplicial complex and prove that their homotopy type is that of wedge sum of spheres (cf. Theorem \ref{thm:main}). Section \ref{sec:4} revisits the results on the singular homology of certain classes of shellable $q$-complexes from \cite{GPR21} and explicitly determines the homology of arbitrary $q$-matroid complexes. The procedure we outline in section \ref{sec:4} is applicable to any shellable $q$-complexes whose order complex is a shellable simplicial complex. Finally, in section \ref{sec:5} we conclude our paper and mention an open question that appears to be the missing link to relate the singular homology with the coding theoretic parameters such as generalized rank weights of a rank metric code. 

%\glen{The use of "Sect. 4" vs "section 4" was inconsistent. Switched the instances of "Sect." to "section", since the abbreviation and capitalization is discouraged at many journals.\tovo{We follow this}}

%\tovo{
%\begin{itemize}
%     \item Review on the notion of q-complexes and q-posets by Alder
%     \item CM posets?
%     \item Informal definition of shellable q-complexes.
%     \item short review on the results of Singular homology from \cite{GPR21} and question on what is left.
%     \item Question about the homotopy type.
%     \item summary of our methods.

% \end{itemize}
% }

\section{Shellability of order complexes of \texorpdfstring{$q$}{q}-complexes}
%\rakhi{Different name for the section?}
In subsection \ref{subsec:2.1}, we recall the relevant combinatorial notions and some results from \cite{GPR21}. The next subsection deals with the shellability of order complexes of shellable $q$-complexes.

\subsection{Preliminaries}\label{subsec:2.1}

%\subsection{\texorpdfstring{$q$}{q}-complexes and \texorpdfstring{$q$}{q}-matroids}\label{subsec:2.1}
Throughout this paper, $q$ denotes a power of a prime number and $\Fq$ the finite field with $q$ elements. We fix a positive integer $n$ and denote by $E$ the $n$-dimensional vector space $\Fqn$ over $\Fq$. Let $\Sigma(E)$ denote the set of all $\Fq$-subspaces of $E$. For $U,V \in \SE$, we write $U \subseteq V $ if $U$ is contained in $V$ and $U \subset V$ if the containment is proper. Since any subspace $U \in \SE$ in this paper is an $\Fq$-subspace, we simply write $\dim \, U$ to mean the $\Fq$-vector space dimension of $U$. We write $\NN$ for the set of all nonnegative integers and $\NN^+$ for the set of all positive integers. For a nonnegative integer $k \leq n$, let ${n \brack k}_q$ denote the $q$-binomial coefficient given by $\frac{\prod_{i=0}^{k-1}(q^n - q^i)}{\prod_{i=0}^{k-1}(q^k-q^i)}$.

We will not review the basic definitions and results concerning simplicial complexes and matroids. Though the $q$-analogs we study in the paper are immensely inspired by these classical notions, we do not need them formally in this work. For a good exposition, one can refer to \cite{Stan} or \cite{GSSV} for simplicial complexes, shellability and related concepts, and to the book \cite{ Oxley,WHITE} for matroids.

%In the sequel, $q$ is a power of a prime number, and $\Fq$ denotes the finite field with $q$ elements. For an integer $n$, let $\Sigma(\Fq^n)$ be the set of all $\Fq$-vector subspaces of $\Fq^n$.
\begin{definition}\cite[Section 5]{Rota71}\label{def:q-complex}
 A $q$-complex on $E = \Fqn$ is a set $\Delta$ consisting of subspaces of $E$ satisfying the property that whenever $A \in \Delta$, all the $\Fq$-subspaces of $A$ are in $\Delta$.

Let $\Delta$ be a $q$-complex. The elements of $\Delta$ are called \emph{faces}. The maximal elements of $\Delta$ w.r.t. inclusion are called \emph{facets} of $\Delta$. We say $\Delta$ is \emph{pure} if all the facets have the same dimension. The \emph{dimension} of $\Delta$, denoted as $\dim \, \Delta$, is $\max\{ \dim A \colon A \in \Delta\}.$
\end{definition}

The notion of shellability for $q$-complexes was introduced by Alder \cite[Definition 1.5.1]{Alder}. We recall a different but obviously equivalent definition of shellable $q$-complexes as proved in \cite[Lemma 4.3]{GPR21}.

\begin{definition}\label{def:q-shellable}
    A pure $q$-complex $\Delta$ on $E=\Fq^n$, of dimension $r$ is called shellable if there is a total order $F_1,\dots,F_t$ on the facets of $\Delta$ such that for every $i, j \in \NN^+$
with $i < j \le t$, there exists $k \in\NN^+$ with $k < j$ such that $$F_i\cap F_j\subseteq F_k\cap F_j \text{ and } \dim F_k \cap F_j = r-1.$$ 
%\glen{This definition is at first glance different from both the definition in Alder and in your previous paper. Perhaps either use one of those definitions, or explain why this one is equivalent? \tovo{We modified it according to our previous paper. And the dimension is in the definition}
%Thanks. There was a $\dim$ in that inclusion of subsets, it was just a typo then. It might be kind to the reader to cite Lemma 4.3 of the previous paper when giving this equivalent definition.}
%$\forall i,j$ with $1\leq i<j\leq t$, there exists $k<j$ such that $\dim F_i\cap F_j\subseteq F_k\cap F_j$ and $\dim F_k\cap F_j=r-1$.

\end{definition}

%We will see in the sequel that shellable $q$-complexes have nice topological properties.

Matroids have been thoroughly studied, for example in~\cite{Oxley}. They have many equivalent definitions (via bases, independent sets, circuits, flats, rank function to name a few). They have natural $q$-analogues (see~\cite{JP18}). %\marginpar{is this the intended citation?} 
Here we give the definition of a $q$-matroid via its rank function.
\begin{definition}\label{def:q-matroid}
        A $q$-matroid $M$ over $E =\Fq^n$ is a pair $M=(E,\rho)$ where $\rho$ is a function $\rho:\Sigma(E)\rightarrow \NN$ satisfying
    \begin{enumerate}[(R1)]
        \item $0\leq \rho(U)\leq \dim U$ for all $U\in \Sigma(E)$,
        \item If $U,V\in \Sigma(E)$ with $U\subset V$, then $\rho(U)\leq \rho(V)$,
%\glen{I assume in R3 it is supposed to be uppercase $V$ not $v$? }
        \item $\rho(U+V)+\rho(U\cap V)\leq \rho(U)+\rho(V)$ for all $U,V\in \Sigma(E)$.
    \end{enumerate}
\end{definition}

\begin{definition}\label{def:q-indepbase}
        Let $M=(E,\rho)$ be a $q$-matroid. An element $U\in \Sigma(E)$ is said to be an \emph{independent} space of $M$ if $\rho(U)=\dim U$; otherwise, it is called \emph{dependent}. A basis of $M$ is a maximal independent space $U$ of $M$ w.r.t inclusion of subspaces. 
        %\glen{I'm having a bit of trouble parsing this sentence. A basis of a subspace $U$ is just some linearly independent set of vectors $\{v_1,...v_k\}$ spanning $U$, and there are of course many different bases for a given subspace $U$. Are these bases really what are called maximal independent elements? \tovo{I wrote the definition more clearly.} }
         We write $\I_M$ for the set of independent spaces of the $q$-matroid $M$ and $\B_M$ for the set of all bases of $M$.
      % \glen{The term "basis of $M$" seems undefined. \tovo{Adressed above.} Thanks, fixed now.}
\end{definition}

It is well known that the set $\I_M$ of independent spaces 
%\glen{Perhaps the terms are commonly used interchangeably, but I see that "independent subspace", and "independent element", and "independent set" all seem to be referring to the same thing. Pick one? \tovo{Let's use independent spaces everywhere}}
of a $q$-matroid $M=(E,\rho)$ forms a $q$-complex $\Delta_M$ on $E$, which we refer to as the \emph{$q$-complex associated with $M$}. By a $q$-matroid complex on $E$, we shall mean the $q$-complex associated with a $q$-matroid on $E$. It is easy to see that $\B_M$ is the set of facets of $\Delta_M$ and the $q$-matroid complex $\Delta_M$ is pure. 

In~\cite[Theorem 4.4]{GPR21}, it is proved that any $q$-matroid complex is shellable.
A variant of row reduced echelon forms of subspaces, called \emph{tower decompositions} \cite[Section 3]{GPR21} has been used to order the facets in a shelling order \cite[Lemma 3.6]{GPR21}. Next we give that definition of the ordering in \cite[Definition 3.5]{GPR21} in an alternative form that will be more suitable for this paper. We prove the equivalence of the two orderings in Proposition \ref{prop:equivalent_orderings}.

First, as in~\cite{GPR21}, fix a linear ordering  $\preceq$ on $\F_q$ satisfying $0 \prec 1$ and $(\forall x \in \Fq\backslash\{0,1\})(1  \prec x).$ 
This ordering extends to the lexicographic ordering on $\Fq^n$, which we still denote by $\preceq$, as follows: for $\alpha = (\alpha_1, \ldots, \alpha_n), \beta= (\beta_1, \ldots, \beta_n) \in \Fqn$, if $i$ is the smallest integer such that $\alpha_i\neq \beta_i$, then
\[
\alpha \prec \beta \Leftrightarrow \alpha_i\prec \beta_i.
\]

\begin{definition}\label{def:ordering}
    Let $U\neq V$ be two vector subspaces of $\Fq^n$ of the same dimension. We define a relation $\prec_q$ by \[U \prec_q V \Leftrightarrow \min U \backslash V \prec  \min V \backslash U.\]
\end{definition}
We show that the binary relation $\prec_q$ defines a total ordering on the subspaces of same dimension of $\Fqn$ by proving its equivalence with the ordering defined in \cite[Definition 3.5]{GPR21}. In the proof,  we will use the notation of~\cite{GPR21} without introducing it, except that we denote the ordering in \cite[Definition 3.5]{GPR21} by $\ll$.
\begin{pro}\label{prop:equivalent_orderings}
    The binary relation $\prec_q$ of definition~\ref{def:ordering} is the same as the ordering defined in~\cite[Definition 3.5]{GPR21}. As such, it defines a total ordering on the subspaces of same dimension of $\Fq^n$.
\end{pro}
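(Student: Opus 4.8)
The plan is to rewrite the ordering $\ll$ of \cite[Definition 3.5]{GPR21} as a comparison of the $\preceq$-smallest vectors extracted from $U$ and $V$, and then to recognize those vectors as $\min U\setminus V$ and $\min V\setminus U$. Throughout, fix the standard ordered basis $e_1,\dots,e_n$ of $\Fqn$ used in \cite{GPR21}; recall that the tower decomposition of a subspace $X$ records the pivot positions of $X$ --- the coordinates $j$ at which $\dim\bigl(X\cap\langle e_j,\dots,e_n\rangle\bigr)$ jumps --- together with, for each pivot $j$, the unique reduced vector of $X$ whose leading nonzero coordinate equals $1$ and occurs in position $j$. The first step is the elementary remark that, because $0\prec1$ and $1\prec x$ for every $x\in\Fq\setminus\{0,1\}$, among all nonzero vectors of $X$ the $\preceq$-smallest one is precisely the reduced tower vector attached to the \emph{largest} pivot of $X$; more generally, $\preceq$ compares two vectors of $X$ first by the position of their leading nonzero coordinate (further to the right being $\preceq$-smaller) and then by the tower decomposition of $X$ read from that position onward.

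Next I would prove the key lemma identifying $\min U\setminus V$. Since $U\setminus V=U\setminus W$ with $W:=U\cap V$, it suffices to describe $\min(U\setminus W)$ for an arbitrary proper subspace $W\subset U$, and this can be done by a greedy left-to-right scan of the coordinates: at the current coordinate $i$, if the subspace $\{u\in U:u_i=0\}$ still contains a vector outside $W$, then $\min(U\setminus W)$ has $i$-th coordinate $0$ and one recurses into that subspace; otherwise $\{u\in U:u_i=0\}$ must coincide with $W$, the $i$-th coordinate of the minimum equals $1$, and one finishes by lex-minimizing over the affine slice $\{u\in U:u_i=1\}$. Applying this scan to both $U\setminus W$ and $V\setminus W$ shows that $\min U\setminus V$ and $\min V\setminus U$ agree in every coordinate lying strictly before the first place at which the tower decompositions of $U$ and $V$ differ --- whether that is a pivot position occurring for one of the two subspaces but not the other, or a differing reduced entry at a shared pivot --- and that at that first place the two minima compare exactly as $\ll$ orders $U$ and $V$.

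Putting the two steps together yields $U\ll V\iff \min U\setminus V\prec\min V\setminus U$, so $\ll$ and $\prec_q$ are the same binary relation; since $\ll$ is a total order on the subspaces of a fixed dimension of $\Fqn$ by \cite{GPR21}, so is $\prec_q$. No separate check of well-definedness of $\prec_q$ is needed: for $U\ne V$ the sets $U\setminus V$ and $V\setminus U$ are nonempty, finite and disjoint, so their $\preceq$-minima exist and are distinct, hence comparable, and exactly one of $U\prec_q V$, $V\prec_q U$ holds.

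I expect the main obstacle to be the bookkeeping in the key lemma --- pinning down $\min(U\setminus W)$ precisely and verifying that the greedy scan is compatible with the reduced (leading-entry-$=1$) normalization built into the tower decomposition. This is exactly where the hypothesis that $1$ is the $\preceq$-smallest nonzero element of $\Fq$ is used, and also where one must check that descending into the affine slices $\{u\in U:u_i=1\}$ does not corrupt the comparison; carrying this out is probably cleanest by induction on $n$, stripping off one coordinate at a time.
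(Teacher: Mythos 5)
Your overall target is right, and your greedy-scan description of $\min(U\setminus W)$ is correct and could be made rigorous; but the entire content of the proposition is concentrated in your ``key lemma,'' which is asserted rather than proved (``applying this scan \dots shows that \dots''), and you yourself flag its bookkeeping as the main obstacle. Worse, the lemma is false as stated. The ordering $\ll$ of \cite[Definition 3.5]{GPR21} makes its decision at the first differing tower \emph{layer}, and the layers are indexed by decreasing leading index, i.e.\ the pivots are scanned from the \emph{right}; the vectors $\min\overline{U}_e$ and $\min\overline{V}_e$ compared there can first differ at a coordinate that has nothing to do with the first left-to-right coordinate at which the echelon/tower data of $U$ and $V$ differ, which is where your lemma locates the comparison.

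Concretely, in $\F_2^4$ take $U=\langle (1,0,1,0),(0,1,0,0)\rangle$ and $V=\langle (1,0,0,0),(0,1,0,1)\rangle$, both in reduced echelon form with pivots $\{1,2\}$. Under any left-to-right reading of ``the first place at which the tower decompositions differ'' (here the reduced rows with pivot $1$ already differ, the first differing entry being in column $3$), that place is column $1$ or $3$; yet $\min U\setminus V=(0,1,0,0)$ and $\min V\setminus U=(0,1,0,1)$ agree in columns $1$--$3$ and first differ only in column $4$. So the two minima neither ``compare at that first place'' nor is the comparison located where your lemma says; the decision is in fact made in the rightmost-pivot layer ($e=1$), where $\min\overline{U}_1=(0,1,0,0)\prec(0,1,0,1)=\min\overline{V}_1$, consistently with $U\prec_q V$. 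The genuine work, which your sketch does not supply, is to show that for the $\ll$-smaller space the vector $\min\overline{U}_e$ at the first differing layer lies \emph{outside} the other space and hence equals $\min U\setminus V$, while $\min V\setminus U\succeq\min\overline{V}_e$; this is exactly what the paper's proof does via the leading-index/profile lemmas of \cite{GPR21}. Note also the paper's logical shortcut, which would simplify your plan: since $\ll$ is a total order and $\prec_q$ is evidently asymmetric (because $U\setminus V$ and $V\setminus U$ are disjoint), it suffices to prove the single implication $U\ll V\Rightarrow U\prec_q V$, so the full ``iff'' you aim for is not needed.
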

\begin{proof}
     Since $\ll$ is a total ordering on subspaces of a given dimension, if $U \ll V \Rightarrow U \prec V$ for two subspaces of dimension $r$ of $\Fq^n$, this implies that the two binary relations are the same. So suppose that $U \ll V$. Let $\tau(U)=(U_1,\cdots,U_r)$ and $\tau(V)=(V_1,\cdots,V_r)$ be their tower decompositions. Let $1\leqslant e \leqslant r$ be such that $U_j=V_j$ for any $j <e$ and $U_e \neq V_e$. By hypothesis, we have \[u_e = \min \overline{U}_e \prec   \min \overline{V}_e=v_e.\]
Suppose if possible that $u_e\in V$. 
By~\cite[Lemma 3.3]{GPR21}, we have $p(v_e)\leqslant p(u_e)$. Further, if $p(v_e) = p(u_e)$, then by~\cite[Lemma 3.2]{GPR21}, $u_e \in \overline{V_e}$, which contradicts the minimality of $v_e \in \overline{V_e}$. Thus $p(v_e)<p(u_e)$, and by~\cite[Lemma 3.3]{GPR21} again, this implies that $u_e \in \overline{V_s} = \overline{U_s}$ for some $s<e$, which is absurd. Thus $u_e \notin V$.\\
An easy generalization of~\cite[Lemma 3.4]{GPR21} shows that $\min U\backslash U_{e-1} = \min \overline{U_e}$ and similarly for $V$, so that we get \[u_e = \min \overline{U_e} = \min U\backslash U_{e-1} = \min U \backslash V_{e-1} \prec \min U \backslash V\] but since $u_e \notin V$, there is actually equality. Further, \[\min U \backslash V = u_e \prec v_e = \min \overline{V_e} = \min V \backslash V_{e-1} = \min V \backslash U_{e-1} \prec \min V \backslash U,\] that is \[U \prec_q V.\]
\end{proof}

\begin{rem}
Notice that the ordering $\preceq_q$ can be used only to compare subspaces of the same dimension. For simplicity, we use the same notation for any dimension.
\end{rem}

\begin{thm}\cite[Theorem 4.4]{GPR21}\label{th:shellable}
Let $M=(E,\rho)$ be $q$-matroid of rank $r$. With the ordering on $r$-dimensional subspaces of $E$ from Definition~\ref{def:ordering}, the $q$-complex $\Delta_M$ is shellable.
\end{thm}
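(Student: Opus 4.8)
The plan is to deduce the theorem from the material already in hand rather than re-prove shellability from scratch. First I would invoke Proposition~\ref{prop:equivalent_orderings}: it tells us that, on the $r$-dimensional subspaces of $E$, the relation $\prec_q$ of Definition~\ref{def:ordering} is literally the same total order as the ordering $\ll$ of \cite[Definition 3.5]{GPR21}. Next I would recall that the facets of the $q$-matroid complex $\Delta_M$ are exactly the bases $\B_M$ of $M$, all of dimension $r$, so that $\Delta_M$ is pure of dimension $r$ and ``a shelling order on the facets of $\Delta_M$'' means precisely a total order on $\B_M$ satisfying the condition of Definition~\ref{def:q-shellable}. Finally, \cite[Theorem 4.4]{GPR21} (via the explicit shelling order produced in \cite[Lemma 3.6]{GPR21}) asserts that listing the bases of $M$ in increasing $\ll$-order is such a shelling; since $\ll$ and $\prec_q$ agree on dimension-$r$ subspaces, the same list in increasing $\prec_q$-order is a shelling, which is exactly the assertion of Theorem~\ref{th:shellable}.

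For completeness I would also indicate the self-contained route, for a reader who prefers not to go through the translation of notation. One fixes, for each basis $B$, its tower decomposition $\tau(B)=(B_1,\dots,B_r)$ — a $q$-analog of the reduced row echelon form, whose leading data encode the $\prec_q$-order, as in \cite[Section 3]{GPR21}. Given bases with $B_i \prec_q B_j$, one locates the first index $e$ at which $\tau(B_i)$ and $\tau(B_j)$ differ and uses the exchange property of $q$-matroids (a consequence of axioms (R1)--(R3) of Definition~\ref{def:q-matroid}) to manufacture an $(r-1)$-dimensional subspace $W$ with $B_i \cap B_j \subseteq W \subset B_j$, together with a basis $B_k$ satisfying $B_k \cap B_j = W$. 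One then verifies, by comparing the leading terms of $\tau(B_k)$ and $\tau(B_j)$, that $B_k \prec_q B_j$.

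I expect the genuine difficulty — in the self-contained version — to be precisely this last verification: arranging the exchanged basis $B_k$ so that it is strictly $\prec_q$-smaller than $B_j$ while its intersection with $B_j$ still has dimension $r-1$ and contains $B_i \cap B_j$. Controlling the leading term of a tower decomposition after a single exchange step is exactly what \cite[Lemmas 3.2--3.4]{GPR21} are designed to do. In the reduction approach, however, all of that work has already been carried out in \cite{GPR21}, and the only new ingredient required here is Proposition~\ref{prop:equivalent_orderings}, which we have already established; so in the present paper the proof of Theorem~\ref{th:shellable} is a one-line consequence of that proposition together with \cite[Theorem 4.4]{GPR21}.
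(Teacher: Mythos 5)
Your proposal is correct and follows essentially the same route as the paper: Theorem~\ref{th:shellable} is simply the result of \cite[Theorem 4.4]{GPR21} restated with the ordering $\prec_q$, and the only new ingredient needed is Proposition~\ref{prop:equivalent_orderings}, which identifies $\prec_q$ with the ordering of \cite[Definition 3.5]{GPR21}; the paper likewise treats the theorem as an imported result once that identification is made. Your sketch of the self-contained exchange argument is a reasonable outline of what happens inside \cite{GPR21}, but it is not needed here and is not part of the paper's argument.
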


Note that the ordering in Definition \ref{def:ordering}
 is essentially a lexicographic ordering, which motivates the following definition. 
 
\begin{definition}
    Let $\Delta$ be a pure $q$-complex on $E=\Fqn$ of dimension $k$. We say that $\Delta$ is a \emph{lexicographically shellable} $q$-complex if the ordering $\preceq_q$ gives a shelling of $\Delta$.
\end{definition}

It is well known that the topological space associated to the (punctured) simplicial complex is homotopy equivalent to a wedge sum of spheres \cite{BH}. Also, their homology groups are determined explicitly  by counting certain facets \cite{BJO92}. An attempt to extend this results to shellable $q$-complexes was made in \cite{GPR21}. There the homology groups of a class of shellable $q$-complexes, including $q$-spheres, uniform $q$-complexes, were described \cite[Theorems 5.7, 5.8, 5.11, 6.10]{GPR21}. In a later section, we will completely describe these homology groups for any lexicographically shellable $q$-matroid complex. In this regard, we prove in the following subsection that the order complexes associated with any lexicographically shellable $q$-complexes are shellable.

\subsection{The order complex of a lexicographically shellable \texorpdfstring{$q$}{q}-complex} \label{subsec:2.2}

The order complexes (formally defined in Definition \ref{def:order-complex}) of a large number of posets in combinatorics are shellable, e.g., bounded, locally upper semimodular posets. In literature, these posets are called \emph{shellable posets}. More finer notions of shellable posets were introduced by Bj{\"o}rner, which are called \emph{EL(edgewise lexicpgraphic)-shellable} and \emph{CL(chainwise lexicogrpahic)-shellable} posets \cite{bjornerlexi, Bjorner}. In fact, it is proved in \cite[Proposition 2.3]{bjornerlexi}, that EL-shellabile $\Rightarrow$ CL-shellable $\Rightarrow$ shellable. However, in this section we  prove that order complexes of lexicographically shellable $q$-complexes are shellable, without explicitly using the methods of $EL$-labeling or $CL$-labeling of \cite{bjornerlexi}.

\begin{definition}[Order complexes]
\label{def:order-complex}
Let $\Delta$ be a $q$-complex over $\Fq^n$. A chain in $\Delta$ is a subset $\UU\subset \Delta$ whose elements are linearly ordered under the subset relation. We abbreviate such a chain with the notation $\UU=\{U_{i_1}\subset\dots \subset U_{i_l}\}$. 
%\glen{I hope this rewording is OK. The notation $\{U_{i_1}\subset\dots \subset U_{i_l}\}$ is not really a standard way of describing a set, since sets are unordered. But a chain is naturally linearly ordered,  so your notation makes sense, but is non-standard. \tovo{Your modification is good.}}
In addition, the empty set is also considered as a chain in $\Delta$. The order complex $K(\Delta)$ associated to $\Delta$ is the set of all chains in $\Delta$. 
%\glen{The notation $K(\Delta)$ is inconsistent with the later usage $N(\Delta)$ for the order complex in section 4. }
\end{definition}

The order complex associated to a $q$-complex is a simplicial complex. From an ordering of the facets of the original $q$-complex $\Delta$, one can associate an ordering on the facets of the order complex $K(\Delta)$ as follows.
%\glen{The order complex defined here is a special case of the nerve of a poset (or more generally, the nerve of a category). I will add an appropriate citation.}

\begin{definition}[A lexicographic order on chains]
Let $K(\Delta)$ be the order complex associated to a $q$-complex $\Delta$. Suppose that the facets of $\Delta$ of the same ranks are ordered with $\preceq_q$. If $\UU=\chain{U}{r}$ and $\VV=\chain{V}{r}$ are two maximal chains in $K(\Delta)$, we say that $U\preceq_l V$ if one of the following is true.
\begin{enumerate}[(i)]
    \item $\UU=\VV$,
    \item $e$ is the largest index such that $U_e\neq V_e$ and $U_e\prec_q V_e$.
\end{enumerate}
\end{definition}

\begin{pro}
Let $\Delta$ be a $q$-complex over $\Fq^n$ and let $C_r$ be the set of all chains of the form $\chain{U}{r}$, where $\dim U_{i}=i$. Then $\preceq_l$ is a total order.
\end{pro}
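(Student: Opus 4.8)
The plan is to verify the three standard properties of a total order — reflexivity, antisymmetry, and transitivity — together with totality (any two chains are comparable), directly from the definition of $\preceq_l$. Fix the set $C_r$ of maximal chains $\UU = \chain{U}{r}$ with $\dim U_i = i$ for all $i$; note that such a chain is completely determined by its entries $U_1, \ldots, U_r$ (the entry $U_0 = \{0\}$ is forced, and so is $U_r$ up to the chain being maximal), and that for each index $i$ the spaces $U_i$ and $V_i$ have the same dimension, so $\prec_q$ applies to them by Proposition~\ref{prop:equivalent_orderings}. Reflexivity is immediate from clause (i). For totality, given $\UU \neq \VV$ there is a largest index $e$ with $U_e \neq V_e$; since $\prec_q$ is a total order on subspaces of dimension $e$, either $U_e \prec_q V_e$ or $V_e \prec_q U_e$, giving $\UU \prec_l \VV$ or $\VV \prec_l \UU$. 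Antisymmetry follows because if $\UU \neq \VV$ then the witnessing index $e$ is the same for both comparisons, and $U_e \prec_q V_e$ excludes $V_e \prec_q U_e$, so we cannot have both $\UU \prec_l \VV$ and $\VV \prec_l \UU$.

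The one genuinely non-trivial point is transitivity. Suppose $\UU \prec_l \VV$ and $\VV \prec_l \WW$, with all three distinct. Let $e$ be the largest index with $U_e \neq V_e$, and $f$ the largest index with $V_f \neq W_f$; then $U_e \prec_q V_e$ and $V_f \prec_q W_f$. Set $g = \max\{e, f\}$. I claim $g$ is the largest index where $\UU$ and $\WW$ differ and that $U_g \prec_q W_g$. For indices $i > g$ we have $U_i = V_i$ (since $i > e$) and $V_i = W_i$ (since $i > f$), hence $U_i = W_i$. At index $g$ there are two cases. If $e = f = g$, then $U_g = U_e \prec_q V_e = V_f \prec_q W_f = W_g$, and transitivity of $\prec_q$ on dimension-$g$ subspaces gives $U_g \prec_q W_g$, in particular $U_g \neq W_g$. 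If $e > f$, then $g = e$, and $V_e = W_e$ (as $e > f$), so $U_g = U_e \prec_q V_e = W_e = W_g$. If $f > e$, then $g = f$, and $U_f = V_f$ (as $f > e$), so $U_g = U_f = V_f \prec_q W_f = W_g$. In every case $U_g \prec_q W_g$ and $U_i = W_i$ for $i > g$, so $g$ witnesses $\UU \prec_l \WW$.

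The main obstacle, such as it is, is bookkeeping the index comparisons in the transitivity argument — in particular making sure that in the case $e \neq f$ one of the two chains agrees with the third above the smaller index, so that the strict comparison at the larger index really is the top disagreement. This uses only that $\prec_q$ is itself a total order on subspaces of a fixed dimension, which is exactly Proposition~\ref{prop:equivalent_orderings}, so no new ideas beyond careful indexing are needed. I would present the proof by listing reflexivity, antisymmetry, totality in a sentence each, and then giving the three-case transitivity check as above.
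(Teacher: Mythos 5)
Your proof is correct and follows essentially the same route as the paper: reflexivity, comparability, and antisymmetry read off the definition, and transitivity handled by the same three-case comparison of the top disagreement indices ($e>f$, $e=f$, $e<f$), using that $\prec_q$ is a total order on subspaces of a fixed dimension. No substantive differences from the paper's argument.
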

\begin{proof}\
\begin{itemize}
    \item Comparability (For any chains $\UU,\VV$, either $\UU\preceq_l \VV$ or $\VV\preceq_l \UU$): If $\UU=\VV$ then it's clear. If $e$ is the largest integer such that $U_e\neq V_e$ then either $U_e\pq V_e$ or $V_e\pq U_e$.
    \item Antisymmetry (If $\UU\preceq_l \VV$ and $\VV \preceq_l \UU$, then $\UU=\VV$): Suppose that $\UU\neq \VV$, then let $e$ be the largest index where $U_e\neq V_e$. Then, we can only have one of  $U_e\pq V_e$ and $V_e\pq U_e$. Hence we can only have one of $\UU\lo \VV$ and $\VV \lo \UU$.
    \item Transitivity (If $\UU\preceq_l \VV$ and $\VV \preceq_l \WW$, then $\UU\preceq_l \WW$): We may assume that $\UU\neq \VV$ and $\VV\neq \WW$. Let $e$ be the largest index such that $U_e\neq  V_e$ and $U_e\pq V_e$. Similarly, let $f$ be the largest index such that $V_f\neq  W_f$ and $V_f\pq W_f$.
    \begin{itemize}
        \item Case 1: $e> f$. Then $e$ is the largest index such that $U_e\neq W_e$. And $U_e\pq V_e = W_e$ and thus $U\lo W$.
        \item Case 2: $e=f$. Then $e$ is the largest index such that $U_e\neq W_e$. And $U_e\pq V_e \pq W_e$ and thus $U\lo W$.
        \item Case 3: $e<f$. Then $f$ is the largest index such that $U_f\neq W_f$ and $U_f = V_f\pq W_f$.
    \end{itemize}
\end{itemize}
\end{proof}

\begin{definition}
Let $\UU= \chain{U}{r}$ be a chain with $\dim U_i=i$ and let $A$ be a subspace of dimension $i$ satisfying $U_{i-1}\subset A \subset U_{i+1}$. Define the chain ${\UU}^{A,i}$ as 
\[
{\UU}^{A,i} = \{ U_0\subset \dots \subset U_{i-1} \subset A \subset U_{i+1} \subset \dots \subset U_r\}.
\]
\end{definition}

\begin{lem}\label{lem:2}
Let $\UU= \chain{U}{r}$ be a chain and let $A$ be a subspace of dimension $i$ satisfying $U_{i-1}\subset A \subset U_{i+1}$. If $A\prec_q U_i$ , then
\[
\#(\UU\cap {\UU}^{A,i}) = r-1 \text{ and } {\UU}^{A,i} \lo \UU.
\]
\end{lem}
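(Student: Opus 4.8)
The statement has two parts: a count of the intersection $\UU \cap \UU^{A,i}$, and a claim that $\UU^{A,i} \lo \UU$. The first part is essentially a combinatorial observation about chains, while the second requires unwinding the definition of $\preceq_l$.

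First I would handle the cardinality claim. By construction, $\UU = \{U_0 \subset \dots \subset U_r\}$ and $\UU^{A,i} = \{U_0 \subset \dots \subset U_{i-1} \subset A \subset U_{i+1} \subset \dots \subset U_r\}$ agree in every entry except the one in position $i$, where $\UU$ has $U_i$ and $\UU^{A,i}$ has $A$. Since $A \prec_q U_i$ we have in particular $A \neq U_i$ (the ordering is strict on distinct subspaces of the same dimension), so the two chains are genuinely different sets. Both chains have $r+1$ elements, $U_0, \dots, U_r$ respectively $U_0, \dots, U_{i-1}, A, U_{i+1}, \dots, U_r$, and the intersection $\UU \cap \UU^{A,i}$ consists precisely of the $r$ common entries $\{U_j : j \neq i\}$; these are $r$ distinct subspaces since their dimensions $0, 1, \dots, i-1, i+1, \dots, r$ are all distinct. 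Hence $\#(\UU \cap \UU^{A,i}) = r-1$ in the convention of the paper (where $\#$ of a chain appears to be one less than its number of vertices, i.e. its dimension as a simplex — indeed a full chain $\chain{U}{r}$ has $r+1$ vertices and is an $r$-dimensional face of $K(\Delta)$, so its ``$\#$'' is $r$, and a face with $r$ vertices has $\# = r-1$).

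Next I would prove $\UU^{A,i} \lo \UU$. Write $\VV = \UU^{A,i}$, so $V_j = U_j$ for all $j \neq i$ and $V_i = A$. Since $\VV \neq \UU$, let $e$ be the largest index with $V_e \neq U_e$. The only index where the two chains can differ is $i$ itself, so $e = i$, and $V_i = A \prec_q U_i$ by hypothesis. By clause (ii) of the definition of $\preceq_l$ this is exactly the statement $\VV \lo \UU$, i.e. $\UU^{A,i} \lo \UU$.

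I do not expect a serious obstacle here; the only subtlety is matching the paper's conventions — confirming that ``$\#(\UU \cap \UU^{A,i}) = r-1$'' refers to the simplex dimension (number of vertices minus one) of the common sub-chain, rather than its cardinality as a set, and confirming that $A \prec_q U_i$ forces $A \neq U_i$ (which is immediate since $\prec_q$ is a strict total order on same-dimensional subspaces, by Proposition~\ref{prop:equivalent_orderings}). Once those conventions are pinned down, both parts are direct consequences of the definitions.
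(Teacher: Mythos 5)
Your proof is correct and is essentially the paper's argument: the paper simply says the lemma "directly follows from the definition of $\prec_l$," and your write-up just spells out that routine unpacking (the two maximal chains differ only in position $i$, where $A\prec_q U_i$, giving $\UU^{A,i}\prec_l\UU$ and a common sub-chain of simplex dimension $r-1$). Your reading of $\#$ as the dimension of the intersection (one less than the number of common vertices) is the right way to reconcile the stated count with the shellability condition used later, and your remark that $A\prec_q U_i$ forces $A\neq U_i$ is the only other convention point worth noting.
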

\begin{proof}
This directly follows from the definition of $\prec_l$.
\end{proof}

\begin{lem}\label{lem:3}
Let $s$ and $s_1$ be two integers such that $s_1\leq s-2$. 
Let $\VV=\{V_{s_1}\subset V_{s_1+1}\subset \dots \subset V_s\}$ be a chain such that for all $i$ with $s_1<i< s$, $V_i = \min_{\preceq_q}\{F:V_{i-1}\subset F\subset V_{i+1}\}$. 
Then for all $i$ with $s_1<i< s$, $V_{i} = V_{i-1}\oplus \< u_{i} \>$ and $u_{s_1+1} \prec u_{s_1+2} \prec \cdots \prec u_s$ where $u_i = \min V_s\backslash V_{i-1} = \min V_{i}\backslash V_{i-1}$. 
\end{lem}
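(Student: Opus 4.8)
The plan is to induct on $i$ (running from $s_1+1$ up to $s-1$) and show simultaneously that $V_i = V_{i-1} \oplus \langle u_i \rangle$ for a suitable vector $u_i$ with $u_i = \min V_s \backslash V_{i-1} = \min V_i \backslash V_{i-1}$, and that this sequence of vectors is strictly increasing in the $\preceq$ order. The key tool throughout is the minimality hypothesis $V_i = \min_{\preceq_q}\{F : V_{i-1}\subset F\subset V_{i+1}\}$ combined with Definition~\ref{def:ordering}: among all $i$-dimensional subspaces $F$ sandwiched between $V_{i-1}$ and $V_{i+1}$, the space $V_i$ is the one minimizing $\min F\backslash F'$ over competitors, which forces $\min V_i\backslash V_{i-1}$ to be as small as possible.

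The first step is to establish, for each $i$ with $s_1 < i < s$, that if we set $u_i := \min V_i \backslash V_{i-1}$, then in fact $u_i = \min V_s\backslash V_{i-1}$ as well. The inclusion ``$\preceq$'' is clear since $V_i \backslash V_{i-1} \subseteq V_s\backslash V_{i-1}$. For the reverse, suppose some $w \in V_s\backslash V_{i-1}$ had $w \prec u_i$. Then $V_{i-1}\oplus\langle w\rangle$ is an $i$-dimensional space contained in $V_s$; one must check it lies inside $V_{i+1}$. This is where I would use the inductive structure: working upward from $V_{i-1}$, the spaces $V_{i-1}, V_{i-2}, \dots$ are determined by smaller-indexed minimal choices, and the vector $w$ with $w\prec u_i \preceq u_{i+1} \preceq \cdots$ (once increasingness is in hand for larger indices, or by a downward reorganization) must already appear in $V_i$ — essentially because $V_{i+1}$ is spanned by $V_{i-1}$ together with $u_i$ and $u_{i+1}$, and $w$ being $\preceq$-smaller than $u_i$ cannot have a nonzero $u_{i+1}$-component without violating the lexicographic comparison that defines $V_i$ as the $\preceq_q$-minimum between $V_{i-1}$ and $V_{i+1}$. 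Thus $V_{i-1}\oplus\langle w\rangle \subseteq V_{i+1}$, contradicting minimality of $V_i$. Hence $u_i = \min V_s\backslash V_{i-1}$, and in particular $V_i = V_{i-1}\oplus\langle u_i\rangle$.

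The second step is to prove $u_{s_1+1}\prec u_{s_1+2}\prec\cdots\prec u_s$. Fix $i$ with $s_1 < i \le s-1$ and compare $u_i = \min V_s\backslash V_{i-1}$ with $u_{i+1} = \min V_s\backslash V_i$. Since $V_{i-1}\subset V_i$, we have $V_s\backslash V_i \subseteq V_s\backslash V_{i-1}$, so $u_i \preceq u_{i+1}$; equality cannot hold because $u_i \in V_i$ so $u_i \notin V_s\backslash V_i$, whence $u_i \ne u_{i+1}$ and therefore $u_i \prec u_{i+1}$. (Here $u_s$ should be read as $\min V_s\backslash V_{s-1}$, consistent with the statement.) Chaining these strict inequalities gives the desired strictly increasing sequence.

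The main obstacle I anticipate is the containment claim $V_{i-1}\oplus\langle w\rangle \subseteq V_{i+1}$ in the first step: it requires understanding precisely how the lexicographically minimal intermediate space $V_i$ sits inside $V_{i+1}$, i.e.\ that a vector $\preceq$-smaller than $u_i$ which lies in $V_s$ must in fact lie in $V_{i+1}$. I would handle this by unwinding Definition~\ref{def:ordering} for the pair $(V_i, F)$ where $F = V_{i-1}\oplus\langle w\rangle$ forced to be compared — but since $F\not\subseteq V_{i+1}$ a priori, the cleanest route is to run the whole argument by downward induction on $i$ starting from $i = s-1$, where $V_{i+1} = V_s$ makes the containment automatic, and then propagate: once we know $V_{i+1} = V_i \oplus \langle u_{i+1}\rangle$ and the structure of $V_{i+1}$, the analysis of $V_i = \min_{\preceq_q}\{F : V_{i-1}\subset F\subset V_{i+1}\}$ via Definition~\ref{def:ordering} pins down $u_i$ exactly and shows $u_i\prec u_{i+1}$. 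An auxiliary lemma of the flavor of \cite[Lemma 3.4]{GPR21} (``$\min U\backslash U_{e-1} = \min\overline{U_e}$''), already invoked in the proof of Proposition~\ref{prop:equivalent_orderings}, should make the bookkeeping routine.
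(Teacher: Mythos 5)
Your route is essentially the paper's: both identify $V_i$ with $V_{i-1}\oplus\langle u_i\rangle$ by playing the $\preceq_q$-minimality of $V_i$ against a competitor of the form $V_{i-1}\oplus\langle w\rangle$ and unwinding Definition~\ref{def:ordering}, and both obtain the strict chain $u_{s_1+1}\prec\cdots\prec u_s$ from the nesting $V_s\backslash V_i\subseteq V_s\backslash V_{i-1}$ together with $u_i\in V_i$; your second step is complete and coincides with the paper's. The only real issue is the point you flag yourself: to invoke the minimality of $V_i$ the competitor must lie in the interval, i.e.\ you need $w\in V_{i+1}$, and your first attempt at justifying this (the ``nonzero $u_{i+1}$-component'' sentence) is not a proof. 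For what it is worth, the paper's written proof is silent on exactly this point: it sets $u_i=\min V_s\backslash V_{i-1}$, takes $W=V_{i-1}\oplus\langle u_i\rangle$ and asserts ``by the definition of $V_i$, $V_i\prec_q W$'', which tacitly presupposes $u_i\in V_{i+1}$. Your fallback, downward induction starting at $i=s-1$, is the right repair, and it becomes a one-liner if you define $u_i:=\min V_s\backslash V_{i-1}$ (the paper's choice) rather than $\min V_i\backslash V_{i-1}$: for $i=s-1$ the containment $u_i\in V_{i+1}=V_s$ is trivial, and if for some $i<s-1$ we had $u_i\notin V_{i+1}$, then in particular $u_i\notin V_i$, so $u_i\in V_s\backslash V_i$ and hence $u_{i+1}\preceq u_i\preceq u_{i+1}$, i.e.\ $u_i=u_{i+1}$, which lies in $V_{i+1}$ because the previous stage gave $V_{i+1}=V_i\oplus\langle u_{i+1}\rangle$ --- a contradiction. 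With the containment in hand, the exchange computation you describe (identical to the paper's) yields $V_i=V_{i-1}\oplus\langle u_i\rangle$, hence $u_i=\min V_i\backslash V_{i-1}$, and your step two finishes the proof. In short: same approach as the paper, correct in its second half, incomplete as written in the first half, but your proposed induction does close the gap and in fact produces a slightly more careful argument than the published one.
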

%\begin{proof}%%%%this is an alternative proof.
% For every $s_1<i\leqslant s$, let $v_i =\min V_{i}\backslash V_{i-1}$ and $u_i = \min V_s\backslash V_{i-1}$. Obviously, $V_i = V_{i-1} \oplus \<v_i\>$, and for every $j$ with $i>j>s_1$, $V_i = V_j \oplus \<v_{j+1},\ldots,v_{i}\>$. In particular, since $V_{i-1} \subset V_{i-1} \oplus \<v_{i+1}\>  \subset V_{i+1}$, by minimality of $V_i = V_{i-1} \oplus \<v_i\>$, we have that $V_{i-1} \oplus \<v_i\> \pq V_{i-1} \oplus \<v_{i+1}\>.$ Following Lemma~\ref{lem:0}, this shows that
% \[
% v_i = \min V_{i}\backslash V_{i-1} \preceq \min V_{i}\backslash (V_{i-1} \oplus \<v_{i+1}\>) \prec \min  (V_{i-1} \oplus \<v_{i+1}\>)\backslash V_{i} \preceq v_{i+1}.
% \] 
% Finally, we show that $u_i = v_i$. Indeed, if not, then it is obvious that $u_i = \min V_s\backslash V_{i-1} \prec \min V_i\backslash V_{i-1}  = v_i$. Since $u_i \in V_s=V_{i-1} \oplus \<v_i,\ldots,v_s\>$, we can write (in a unique way), for some $v\in V_{i-1}$,
% \[
% u_i = v + \sum_{j=i}^s \lambda_j v_j.
% \] 
% Let $J$ be maximum such that $\lambda_J \neq 0$. It is always well defined. By definition, $u_i \in V_J \backslash V_{J-1}$. Moreover, we have \[u_i \prec v_i \preceq v_J = \min V_J \backslash V_{J-1} \preceq u_i,\] which is impossible. 
% \end{proof}

\begin{proof}
Let $u_i = \min V_s\backslash V_{i-1}$ for $s_1<i< s$. Then we claim that $V_i := \min_{\preceq_q}\{F:V_{i-1}\subset F\subset V_{i+1}\} = V_{i-1}\oplus \< u_i \>$.

\textbf{Proof of the claim}:
Let $W = V_{i-1}\oplus \< u_i \>$ and suppose that $W \neq V_i$. By the definition of $V_i$, $V_i \prec_q W$. Thus, 
%following Lemma \ref{lem:0}, we have
$\min V_i \backslash W  \prec \min W \backslash V_i$. Note that $V_i \backslash W \subseteq V_{i+1} \backslash V_{i-1} \subseteq V_s \backslash V_{i-1}$. Hence $u_i=\min V_s\backslash V_{i-1}\preceq \min V_i\backslash W\prec \min W\backslash V_i$.
On the other hand, since $u_i \notin V_i$, then $u_i \in W \backslash V_i$. Hence $\min W \backslash V_i\preceq u_i$, which gives a contradiction. This proves the claim.

Thus it is proved that $V_i = \min_{\pq}\{F:V_{i-1}\subset F\subset V_{i+1}\} = V_{i-1}\oplus \< u_i \>$, where $u_i = \min V_s\backslash V_{i-1}$ for $s_1<i< s$. Now, note that since $u_i \in V_i$, minimality of $u_i$ in $V_s \backslash V_{i-1}$ implies that $u_i = \min V_i \backslash V_{i-1}$.

On the other hand, since $u_i = \min V_s \backslash V_{i-1}$ for all $s_1 < i \leq s$, it immediately follows that $u_{s_1+1} \prec u_{s_1+2} \prec \cdots \prec u_s$. Thus it completes the proof of the lemma.
\end{proof}

We show that the order complex of a $q$-sphere is a  shellable simplicial complex.

\begin{thm}[Shellability of the order complex of a $q$-sphere]\label{thm:2.10}
Suppose that $U_{r+1}$ is a vector space of dimension $r+1$. Let $S_{q}^{r}$ be the set of all codimension $1$ subspaces of $U_{r+1}$ and let $K(S_{q}^{r})$ be the corresponding order complex. Then $\preceq_l$ defines a shelling on $K(S_{q}^{r})$.
\end{thm}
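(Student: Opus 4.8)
The plan is to verify Definition \ref{def:q-shellable}'s simplicial analogue directly for the order $\preceq_l$ on the maximal chains of $K(S_q^r)$. A facet of $K(S_q^r)$ is a maximal chain $\UU = \{U_0 \subset U_1 \subset \dots \subset U_r\}$ with $\dim U_i = i$ and $U_r \in S_q^r$ (note $U_{r+1}$ is fixed, so it need not appear in the chain itself; it serves only as the ambient space). Given two such chains $\UU \lon \VV$, I must produce a chain $\WW \lon \VV$ with $\#(\WW \cap \VV) = r$ (i.e., $\WW$ and $\VV$ differ in exactly one subspace) and $\WW \cap \UU \subseteq \WW \cap \VV$. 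The strategy is: locate the largest index $e$ where $U_e \neq V_e$, then ``repair'' $\VV$ from the top down to agree with $\UU$ above level $e$ in the cheapest possible way, using Lemma \ref{lem:3} to control the resulting minima and Lemma \ref{lem:2} to certify each single-step modification decreases the chain in $\preceq_l$.

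Concretely, I would first handle the case $e = r$ (the top spaces differ): here I claim the required $\WW$ is obtained by taking $W_r = V_r$ and, for $i = r-1, r-2, \dots$, setting $W_i = \min_{\preceq_q}\{F : V_{i-1}' \subset F \subset W_{i+1}\}$ along a chain that agrees with $\UU$ wherever possible; the point is that since $U_r \prec_q V_r$ and $\min U_r \backslash V_r \prec \min V_r \backslash U_r$, there is a vector $u$ realizing this minimum lying in $U_r$ but not $V_r$, and inserting the appropriate flag through $\langle u\rangle$ gives a chain below $\VV$ containing $U_{?}$. Then for general $e < r$: since $U_e \neq V_e$ but $U_{e+1} = V_{e+1}$ and $U_{e-1} = V_{e-1}$ when $e$ is as chosen (actually $U_j = V_j$ for all $j > e$), the subspace $A := U_e$ satisfies $V_{e-1} \subset A \subset V_{e+1}$. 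If $A \prec_q V_e$, Lemma \ref{lem:2} applied with the chain $\VV$ immediately gives $\WW := \VV^{A,e} \lon \VV$ with $\#(\WW \cap \VV) = r$, and since $\WW$ agrees with $\UU$ at every level $\geq e$, one checks $\WW \cap \UU \subseteq \WW \cap \VV$. If instead $V_e \prec_q A = U_e$, this would force (by maximality of $e$) a contradiction with $\UU \lon \VV$ — so I need a short argument that the case $V_e \prec_q U_e$ cannot arise for the largest differing index, which is essentially immediate from the definition of $\preceq_l$.

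The main obstacle I anticipate is the $e = r$ case, where I cannot simply substitute one subspace of $\VV$ and stay inside the facet set: replacing $V_r$ by $U_r$ changes the top of the chain, and the intermediate spaces $V_{r-1} \supset \dots$ may not be subspaces of $U_r$, so a single-step modification is not available and I must descend through several levels, re-choosing each $W_i$ minimally. Controlling that this descent (i) produces a legitimate chain, (ii) lands on something $\preceq_l V$, and (iii) has intersection with $\VV$ of size exactly $r$ is where Lemma \ref{lem:3} does the real work: it guarantees that the greedily-chosen flag $W_{i} = W_{i-1} \oplus \langle u_i\rangle$ has $u_{r} \succ u_{r-1} \succ \dots$ increasing, which pins down exactly one index at which $\WW$ and $\VV$ first diverge. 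I would organize the argument so that after fixing $e$, I reduce to applying Lemma \ref{lem:3} on the segment below $e$ and Lemma \ref{lem:2} for the single swap, keeping the bookkeeping on $\#(\WW\cap\VV)$ explicit. A final remark should note that purity of $K(S_q^r)$ (all maximal chains have $r+1$ elements) is clear, so Definition \ref{def:q-shellable} applies verbatim.
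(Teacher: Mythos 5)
Your proposal misses the central difficulty that the paper's proof is built around. If $s$ is the largest index with $U_s\neq V_s$, maximality of $s$ only gives $U_j=V_j$ for $j>s$; it does not give $U_{s-1}=V_{s-1}$. So your candidate $A:=U_s$ (in your ``general $e<r$'' case) need not contain $V_{s-1}$, and the one-step swap $\VV^{A,s}$ need not even be a chain. The two flags can disagree along a whole run of consecutive levels: writing $a$ for the largest index below $s$ with $U_a=V_a$, the hard case is $a\le s-2$, and there your argument offers nothing. The paper handles it by a change of goal that your plan never makes: one does not try to steer $\WW$ toward $\UU$ at all. Since $U_i\neq V_i$ for all $a<i\le s$, the set $\UU\cap\VV$ contains no element of $\VV$ at those levels, so it suffices to find a \emph{single} index $i$ in that window with $V_i\neq\min_{\prec_q}\{F: V_{i-1}\subset F\subset V_{i+1}\}$ and swap in that local minimum; Lemma \ref{lem:2} then gives $\WW\lo\VV$ with the right codimension, and $\UU\cap\VV\subseteq\WW\cap\VV$ is automatic. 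The existence of such an index is exactly the paper's contradiction argument: if every $V_i$ in the window were locally minimal, Lemma \ref{lem:3} together with $U_s\prec_q V_s$, $U_a=V_a$ and $U_{s+1}=V_{s+1}$ (adjoining the ambient $U_{r+1}$ when $s=r$) forces, via a comparison of minimal vectors, that $V_j\subset U_s$ propagates upward until $U_s=V_s$, a contradiction. This step is the substance of Theorem \ref{thm:2.10} and is absent from your proposal.

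Two further problems. First, your plan for the top case $e=r$ --- rebuilding $\VV$ ``from the top down, re-choosing each $W_i$ minimally'' --- is internally inconsistent with your own requirement: a chain that differs from $\VV$ at more than one level meets $\VV$ in fewer than the required number of elements, so your conditions (i)--(iii) cannot hold simultaneously, and Lemma \ref{lem:3} cannot repair this; in the paper the case $s=r$ is not special at all, since one simply adds $U_{r+1}$ as a common top element and runs the same single-swap argument. Second, the condition you propose to verify, $\WW\cap\UU\subseteq\WW\cap\VV$, is not the shelling condition; what must be shown is $\UU\cap\VV\subseteq\WW\cap\VV$. The only part of your proposal that matches a correct step of the paper is the easy subcase $a=s-1$ (chains agreeing at level $s-1$), which is indeed settled by Lemma \ref{lem:2} exactly as you describe.
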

\begin{proof}
Let $\UU \neq \VV$ be two maximal chains in $K(S_{q}^{r})$ such that $\UU\lo \VV$
 where 
 \[
 \UU=\chain{U}{r} \text{ and } \VV=\chain{V}{r}.
 \]
 Let $s$ be the largest index where $U_s\neq V_s$. Therefore $U_s\prec_q V_s$. If necessary, we add $U_{r+1}$ to both sequences, and therefore, without loss of generality, we may assume that $U_{s+1} = V_{s+1}$. Let $a$ be the largest index such that $a < s$ and $U_{a} = V_{a}$. Note that $a$ must exist since $U_0 = V_0$. If $a = s-1$, then we take the chain $\WW=\VV^{U_{s},s}$. It is clear that $\UU\cap \VV \subset \WW\cap \VV$ and by Lemma \ref{lem:2}, $\#(\WW\cap \VV) = r-1$ and $\WW \lo \VV$ and we are done. 

Next, we assume that $a\leq s-2$. Suppose that  
 $V_i = \min_{\preceq_q} \{F \colon V_{i-1} \subset F \subset V_{i+1}\}$ for all $a < i \leq s$.

Let $j$ be the largest integer such that $a\leq j<s$ and that $V_j\subset U_s$. Such $j$ exists since $V_a=U_a\subset U_s$. Since $U_s\subset V_{s+1}$, then  $U_s\backslash V_s\subset V_{s+1}\backslash V_s$. Therefore
\begin{equation}\label{eq:1}
\min V_{s+1}\backslash V_s\preceq \min U_s\backslash V_s.
\end{equation}
Now $V_j\subset V_s$ and this implies that $V_{s+1}\backslash V_s\subset V_{s+1}\backslash V_j$. Therefore
\begin{equation}\label{eq:2}
   \min V_{s+1}\backslash V_j \preceq  \min V_{s+1}\backslash V_s.
\end{equation}
Equations \eqref{eq:1} and \eqref{eq:2} implies that 
\begin{equation}\label{eq:3}
\min V_{s+1}\backslash V_j \preceq \min U_s\backslash V_s.
\end{equation}
Since $U_s\prec_q V_s$, then by definition, $\min U_s\backslash V_s \prec \min V_s\backslash U_s$. This together with equation \eqref{eq:3} implies that 
\begin{equation}\label{eq:4}
    \min V_{s+1}\backslash V_j\prec \min V_s\backslash U_s.
\end{equation}
Now, from Lemma \ref{lem:3}, let $v_{j+1} := \min V_{s+1}\backslash V_j = \min V_{j+1}\backslash V_j$. Since $v_{j+1}\in V_{j+1}$ and $V_{j+1}\subset V_s$, then $v_{j+1}\in V_s$. If $v_{j+1}\notin U_s$, then $v_{j+1}\in V_s\backslash U_s$ and therefore  $\min V_s\backslash U_s\preceq v_{j+1}$. 
By definition of $v_{j+1}$, this means that 
\begin{equation}\label{eq:5}
\min V_s\backslash U_s\preceq \min V_{s+1}\backslash V_j.
\end{equation}
Equations \eqref{eq:4} and \eqref{eq:5} implies $\min V_{s+1}\backslash V_j\prec \min V_{s+1}\backslash V_j$ which is impossible. Hence, we must have $v_{j+1}\in U_s$. That implies $V_{j+1}\subset U_s$. By our hypothesis, $j$ is the largest integer such that $a\leq j<s$ and $V_j\subset U_s$. Therefore $j+1\geq s$. Therefore $V_s \subset V_{j+1}\subset U_s$. Since $V_s$ and $U_s$ have the same dimension, then $U_s = V_s$ which is again a contradiction.

 Hence, for some $a < i \leq s$, $V_i \neq \min_{\preceq_q} \{F \colon V_{i-1} \subset F \subset V_{i+1}\}$. Then take $W_i = \min_{\preceq_q} \{F \colon V_{i-1} \subset F \subset V_{i+1}\}$. Define the chain $\WW = \VV^{W_i,i}$ we see that $\UU\cap \VV \subset \WW\cap \VV$ and by Lemma \ref{lem:2}, $\#(\WW\cap \VV) = r-1$ and $\WW \lo \VV$ and we are done.
\end{proof}

%\glen{This paragraph is difficult to understand. \tovo{I tried to make it clearer. The next corollary 2.17 is just a rewording of the previous theorem 2.16 but with a more explicit expression. When seen in the way of the Corollary 2.17, it is easier to use it in the proof of Theorem 2.18.}}
%\glen{Two versions to pick from:

In the previous theorem, we have a statement written using the term shellability of the order complex of a $q$-sphere. In the following corollary, we rewrite it explicitly in terms of properties of chains to make it easy to use it in the next theorem.

%The previous theorem shows that a particular order on $S_q^r$ is a shelling. We may extract from this proof the following result on chains of subspaces that will be of use in the next theorem. 
%}

\begin{cor}\label{cor:4}
Let $\UU=\chain{U}{r}$ and  $\VV=\chain{V}{r}$ be two chains such that $U_r$ and $V_r$ are of dimension $r$ and they are contained in a common subspace $V_{r+1}$ of dimension $r+1$. Then there exists a subspace $A_i$ of dimension $i\leq r$ such that $\VV^{A_i,i} \lo \VV$ and $\UU\cap \VV \subset \VV^{A_i,i}\cap \VV$.
\end{cor}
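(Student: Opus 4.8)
The statement of Corollary~\ref{cor:4} is essentially a restatement of Theorem~\ref{thm:2.10} stripped of the topological language, so the plan is to \emph{reduce to} Theorem~\ref{thm:2.10} (or rather to reuse its proof) and handle the small discrepancies in the hypotheses. The main point to notice is that in Theorem~\ref{thm:2.10} the two chains $\UU,\VV$ were \emph{maximal} chains inside a fixed $q$-sphere $S_q^r = K(S_q^r)$, i.e. every $U_i,V_i$ had dimension exactly $i$ and $U_r,V_r$ were codimension-$1$ subspaces of a common $(r+1)$-space; here we are given exactly that same setup, with $V_{r+1}$ playing the role of $U_{r+1}$. So the hypotheses match up to renaming, with one genuine extra case: in Corollary~\ref{cor:4} we do \emph{not} assume $\UU \lo \VV$, so we must also allow the possibility $\VV \lo \UU$ or $\UU = \VV$.

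\textbf{Steps.} First, if $\UU = \VV$, any $i \le r$ works since there are many $(i)$-dimensional $A_i$ with $V_{i-1}\subset A_i \subset V_{i+1}$ (as $V_{i+1}/V_{i-1}$ is $2$-dimensional over $\Fq$, hence contains $q+1 \ge 3 > 1$ intermediate lines; here $V_{r+1}$ exists by hypothesis so the range is well-defined), and by Lemma~\ref{lem:2} picking $A_i \prec_q V_i$ gives $\VV^{A_i,i}\lo \VV$ and $\#(\UU\cap\VV^{A_i,i}) = r-1$, while trivially $\UU\cap\VV = \VV \not\subset \VV^{A_i,i}\cap\VV$ — wait, that fails, so in the case $\UU=\VV$ one instead argues directly: $\UU \cap \VV = \VV$ has $r+1$ elements and we need a facet $\WW \lo \VV$ with $\#(\WW \cap \VV) = r$; again take $\WW = \VV^{A_i,i}$ with $A_i \prec_q V_i$, and then $\UU\cap\VV = \VV \supsetneq \WW\cap\VV$ is false, so the honest statement is that the inclusion $\UU\cap\VV \subseteq \WW\cap\VV$ we need degenerates, so this case must be read as: the chains $\UU,\VV$ being \emph{distinct} is implicitly assumed (as everywhere in shelling arguments one compares distinct facets), or one simply notes $\UU\cap\VV = \VV$ and any $\VV^{A_i,i}$ with $A_i\prec_q V_i$ has $\UU\cap\VV \not\subseteq$ it unless... — I would therefore simply state the corollary for $\UU\ne\VV$ and dispatch it as follows. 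Second, if $\UU \lo \VV$: apply verbatim the construction in the proof of Theorem~\ref{thm:2.10} with $U_{r+1}$ there set equal to $V_{r+1}$ here; that proof produces an index $i \le s \le r$ and a subspace $W_i = \min_{\preceq_q}\{F : V_{i-1}\subset F\subset V_{i+1}\}$ with $\VV^{W_i,i}\lo \VV$ and $\UU\cap\VV \subseteq \VV^{W_i,i}\cap\VV$, which is exactly the assertion with $A_i := W_i$. Third, if $\VV \lo \UU$: this is the case not covered by the literal statement of Theorem~\ref{thm:2.10}, but the corollary's conclusion is still about modifying $\VV$ (not $\UU$) to get something $\lo \VV$ containing $\UU\cap\VV$; here one observes that since $\VV\lo\UU$ and the chains are distinct, letting $s$ be the largest index with $U_s\ne V_s$ we have $V_s\prec_q U_s$, and then the identical chain-of-inequalities argument \eqref{eq:1}--\eqref{eq:5} (with the roles played symmetrically, using $V_{r+1}$ as the common ambient $(r+1)$-space) shows the profile of $\VV$ cannot be everywhere-minimal between $a$ and $s$, producing again an index $i$ and $W_i = \min_{\preceq_q}\{F: V_{i-1}\subset F\subset V_{i+1}\}\prec_q V_i$; then $\VV^{W_i,i}\lo\VV$ by Lemma~\ref{lem:2}, and one checks $\UU\cap\VV \subseteq \VV^{W_i,i}\cap\VV$ because the only element of $\VV$ possibly removed is $V_i$, and $V_i \notin \UU$ would have to be verified — which in fact follows since $i \le s$ and $V_s \ne U_s$ forces the relevant incomparability exactly as in the proof of Theorem~\ref{thm:2.10}.

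\textbf{Main obstacle.} The real work is entirely in the third case ($\VV \lo \UU$), where Theorem~\ref{thm:2.10} as stated does not directly apply, and one must rerun its internal argument with the inequality reversed, checking that every use of ``$U_s\prec_q V_s$'' in \eqref{eq:1}--\eqref{eq:5} can be replaced by ``$V_s\prec_q U_s$'' while still forcing $v_{j+1}\in U_s$ and hence the contradiction $U_s = V_s$; the key subtlety is confirming that the maximality/incomparability bookkeeping (the index $j$, the inclusions $U_s\backslash V_s\subseteq V_{s+1}\backslash V_s$, etc.) is genuinely symmetric in $\UU$ and $\VV$ once both are viewed inside the common $(r+1)$-space $V_{r+1}$. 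A cleaner alternative, which I would adopt if the symmetry is awkward to spell out, is: note that $\{F : \dim F = r,\ F\subset V_{r+1}\}$ with $\preceq_q$ is shellable as a $q$-complex (Theorem~\ref{th:shellable} applied to the uniform $q$-matroid of rank $r$ on $V_{r+1}$), so Theorem~\ref{thm:2.10} gives the order-complex shelling property for \emph{every} pair of distinct maximal chains regardless of which is larger, because the defining condition of a shelling (Definition~\ref{def:q-shellable}, suitably transported to simplicial complexes) is required for all $i<j$ in the fixed order $\preceq_l$; restricting attention to the pair $\{\UU,\VV\}$ and relabelling so the $\preceq_l$-smaller one is ``$\VV$'' reduces Case 3 to Case 2. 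I expect this second route to be the quickest to write, so the final proof should be no more than a few lines invoking Theorem~\ref{thm:2.10} with the observation that the conclusion it yields is precisely the displayed assertion of the corollary.
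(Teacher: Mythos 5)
Your handling of the case $\UU \lo \VV$ is exactly the paper's reading of this corollary: it is stated as a rewording of what the proof of Theorem \ref{thm:2.10} actually produces, namely a chain of the form $\VV^{A_i,i} \lo \VV$ whose intersection with $\VV$ has $r$ elements and contains $\UU\cap\VV$, and the only adjustment needed is to take the ambient space $U_{r+1}$ there to be the given $V_{r+1}$. For that case your proposal is correct and agrees with the paper (and your instinct that distinctness, indeed an order hypothesis, is implicitly assumed is also right).

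The genuine gap is your Case 3 ($\VV \lo \UU$): the assertion you try to establish there is false, so neither of your two arguments can be repaired. Concretely, take $q=2$, $r=1$, $V_2=\F_2^2$, $\VV=\{0\subset\langle e_2\rangle\}$ and $\UU=\{0\subset\langle e_1\rangle\}$; with the paper's ordering, $\langle e_2\rangle$ contains the least nonzero vector $(0,1)$, so $\langle e_2\rangle \pq A$ for every other line $A\subset \F_2^2$, hence $\VV\lo\UU$ and there is \emph{no} $A_1$ with $\VV^{A_1,1}\lo\VV$ at all: the conclusion fails. Structurally, the conclusion is not symmetric in $\UU$ and $\VV$---it demands a modification of $\VV$ that is $\prec_l$-earlier than $\VV$, i.e.\ the shelling condition with $\VV$ in the role of the \emph{later} facet---so your ``relabel so the $\prec_l$-smaller chain is $\VV$'' trick proves the statement with the roles exchanged (a chain $\UU^{A_i,i}\lo\UU$ containing $\UU\cap\VV$), not the stated one; and rerunning \eqref{eq:1}--\eqref{eq:5} with the inequality reversed cannot work either, since those inequalities use $\min U_s\backslash V_s \prec \min V_s\backslash U_s$, which is precisely $U_s\pq V_s$. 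The correct resolution is simply to read the corollary with the implicit hypothesis $\UU\lo\VV$ (so $\UU\neq\VV$ as well), which is how it is invoked in Theorem \ref{thm:2.12}: in Case 1 the truncated chains satisfy $\overline{\UU}\lo\overline{\VV}$ because the top differing index is $e$ and $U_e\pq V_e$, and in Case 2 the auxiliary chain $\WW$ satisfies $\WW\lo\VV$ because $W_r\pq V_r$. With that hypothesis made explicit, your Case 2 is the entire proof and coincides with the paper's; Cases 1 and 3 should be deleted rather than argued.
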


\begin{thm}\label{thm:2.12}
Let $\Delta$ be a lexicographically shellable $q$-complex . The ordering $\preceq_l$ is a shelling on the order complex $K(\Delta)$.
\end{thm}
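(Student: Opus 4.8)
The task is to verify the shelling condition for $\preceq_l$ on the pure $r$-dimensional complex $K(\Delta)$: given maximal chains $\UU=\chain{U}{r}$ and $\VV=\chain{V}{r}$ with $\UU\prec_l\VV$, produce a maximal chain $\WW$ with $\WW\prec_l\VV$, $\UU\cap\VV\subseteq\WW\cap\VV$, and $\#(\WW\cap\VV)=r$. I would start from two remarks. Since $\WW$ and $\VV$ are flags with $\dim W_i=\dim V_i=i$, the equality $\#(\WW\cap\VV)=r$ forces $\WW$ to agree with $\VV$ in every dimension but one, say dimension $i$; so $\WW$ is $\VV$ with $V_i$ replaced by some $W_i$, and for such a $\WW$ one has $\WW\prec_l\VV\Leftrightarrow W_i\prec_q V_i$ (the only disagreement is at $i$), $\UU\cap\VV\subseteq\WW\cap\VV\Leftrightarrow U_i\neq V_i$, and $\WW\in K(\Delta)\Leftrightarrow W_i\in\Delta$ — automatic when $i<r$ (then $W_i\subset V_{i+1}\subseteq V_r\in\Delta$) but for $i=r$ requiring that $W_r$ be a facet of $\Delta$ containing $V_{r-1}$. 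Secondly, every element of $\UU\cap\VV$ is a subspace of $U_r\cap V_r$.

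If $U_r=V_r=:F$, everything happens inside the facet $F$: $\UU\setminus\{F\}$ and $\VV\setminus\{F\}$ are maximal chains of the order complex $K(S^{r-1}_q)$ of the $q$-sphere of proper subspaces of the $r$-dimensional space $F$, and $\UU\prec_l\VV$ restricts to the corresponding inequality there. Theorem~\ref{thm:2.10} produces a maximal chain $\WW^{-}$ of $K(S^{r-1}_q)$ with $\WW^{-}\prec_l\VV\setminus\{F\}$, differing from $\VV\setminus\{F\}$ in a single dimension $i\le r-1$ and containing $(\UU\setminus\{F\})\cap(\VV\setminus\{F\})$; readjoining $F$ on top gives the required $\WW$, which lies in $K(\Delta)$ since its one new subspace is contained in $F\in\Delta$.

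Now suppose $U_r\prec_q V_r$, so $U_r,V_r$ are distinct facets of $\Delta$. Call an index $i$ with $U_i\neq V_i$ \emph{open} if either $1\le i\le r-1$ and $V_i$ is not the $\preceq_q$-smallest $i$-dimensional subspace strictly between $V_{i-1}$ and $V_{i+1}$, or $i=r$ and $V_r$ is not the $\preceq_q$-smallest facet of $\Delta$ containing $V_{r-1}$. If some index is open, replacing $V_i$ by a strictly smaller admissible subspace (respectively, by a smaller facet through $V_{r-1}$ when $i=r$) produces a $\WW$ that works, by the first remark. So assume no index is open. Put $a=\max\{i:U_i=V_i\}$; one checks $a\le r-2$ (if $a=r-1$ then $V_{r-1}=U_{r-1}\subset U_r$, so $U_r$ is a facet through $V_{r-1}$ with $U_r\prec_q V_r$, contradicting non-openness of $r$). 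Since $a+1,\dots,r-1$ are non-open, Lemma~\ref{lem:3} applied to $\{V_a\subset\cdots\subset V_r\}$ shows this part of the flag is greedily minimal from $V_a$: $V_i=V_{i-1}\oplus\langle v_i\rangle$ with $v_i=\min V_r\setminus V_{i-1}$ and $v_{a+1}\prec\cdots\prec v_r$. On the other hand, since $\Delta$ is lexicographically shellable, Theorem~\ref{th:shellable} with Definition~\ref{def:q-shellable} applied to $U_r\prec_q V_r$ yields a facet $F_k\prec_q V_r$ of $\Delta$ with $U_r\cap V_r\subseteq F_k\cap V_r$ and $\dim(F_k\cap V_r)=r-1$. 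The plan is then to derive a contradiction: with $j^\star=\max\{i:V_i\subseteq U_r\}$ one has $a\le j^\star\le r-2$ (again $j^\star=r-1$ would contradict non-openness of $r$) and, since $V_{j^\star}\subseteq U_r\cap V_r$ while $v_{j^\star+1}\notin U_r$, one gets $v_{j^\star+1}=\min V_r\setminus U_r$; it then remains to show $\min U_r\setminus V_r\succ v_{j^\star+1}$, which forces $V_r\prec_q U_r$, contradicting $U_r\prec_q V_r$.

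The crux — and the step I expect to be the main obstacle — is exactly this last inequality in the non-open case. Here the hypothesis that $\preceq_q$ shells $\Delta$ must enter essentially, through the facet $F_k$ and the inclusion $U_r\cap V_r\subseteq F_k\cap V_r$, combined with the greedy/minimality structure of Lemmas~\ref{lem:2} and~\ref{lem:3}. The difficulty is that the natural $r$-dimensional "correction'' of $V_r$ through $V_{r-1}$ — built either from $F_k$ or from a small vector of $U_r$ — need not be a facet of $\Delta$, so one cannot just apply the $q$-sphere argument one dimension higher; it is precisely at the top dimension that the combinatorics of the shelling of $\Delta$, rather than of a single subspace lattice, has to be used.
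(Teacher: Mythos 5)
Your reduction of the shelling condition to the existence of an ``open'' index is correct (any admissible $\WW$ with $\#(\WW\cap\VV)=r$ is a one--spot replacement $\VV^{A_i,i}$, and your three equivalences are right), and your Case $U_r=V_r$ is fine: it is the same reduction to Theorem \ref{thm:2.10} that the paper uses. But in the case $U_r\prec_q V_r$ with no open index your argument stops exactly at the point where the theorem actually has content. You correctly set up $a$, $j^\star$, the greedy structure from Lemma \ref{lem:3}, and the facet $F_k\prec_q V_r$ coming from lexicographic shellability, and you correctly identify that everything hinges on showing $v_{j^\star+1}=\min V_r\setminus U_r\prec\min U_r\setminus V_r$; you then declare this inequality the main obstacle and do not prove it. That is a genuine gap, not a routine verification: the inequalities of type \eqref{eq:1}--\eqref{eq:5} in the proof of Theorem \ref{thm:2.10} all rely on the two competing $r$-dimensional spaces sitting inside a common $(r+1)$-dimensional space, and, as you yourself note, $U_r$ and $V_r$ need not have such an ambient space, so the sphere argument cannot be run with $U_r$ directly and no substitute derivation is supplied.

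The missing idea, which is how the paper closes exactly this case, is not to contradict non-openness using $U_r$ at all, but to first trade $U_r$ for the shelling facet: take $W_r\prec_q V_r$ with $U_r\cap V_r\subseteq W_r\cap V_r$ and $\dim(W_r\cap V_r)=r-1$, observe that every common member of $\UU$ and $\VV$ lies in $U_r\cap V_r\subseteq W_r$, and build an auxiliary maximal chain $\WW$ agreeing with $\VV$ up to $V_t$, $t=\max\{i:V_i\subset W_r\}$, and ending at $W_r$; then $\UU\cap\VV\subseteq\WW\cap\VV$, and since $\dim(W_r\cap V_r)=r-1$ the spaces $W_r$ and $V_r$ do lie in the common $(r+1)$-dimensional space $W_r+V_r$, so Corollary \ref{cor:4} (the $q$-sphere case) applies to the pair $(\WW,\VV)$ and produces the required earlier one-spot replacement of $\VV$. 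In other words, the codimension-one intersection guaranteed by the shelling of $\Delta$ is precisely what restores the ``one dimension higher'' ambient space that your direct approach lacks; without this (or an actual proof of your inequality), the proposal does not establish the theorem.
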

\begin{proof}
Let $\UU=\chain{U}{r}$ and $\VV=\chain{V}{r}$ be two distinct chains. Let $e$ be the largest index such that $U_e\neq V_e$.

\noindent\textbf{Case 1}: Suppose that $e<r$ and thus there is $U_{e+1} = V_{e+1}$. Let $f$ be the largest index smaller than $e$ such that $U_f = V_f$. Such $f$ exists because $U_0 = V_0$. Consider the two chains
\begin{align*}
    &\overline{\UU}=\{V_0\subset \dots \subset V_{f}\subset U_{f+1} \subset \dots U_{e}\}\\
    &\overline{\VV}=\{V_0\subset \dots \subset V_{f}\subset V_{f+1} \subset \dots V_{e}\} \\
\end{align*}
We see that $\overline{U}_{e}$ and $\overline{V}_{e}$ are both of dimension $e$ and they are contained in $U_{e+1}$ which is of dimension $e+1$. Thus by the Corollary \ref{cor:4}, there exists a subspace $A_i$ of dimension $i$ such that $f<i\leq e$, $\overline{\VV}^{A_i,i} \lo \overline{\VV}$ and $\overline{\UU}\cap \overline{\VV} \subset \overline{\VV}^{A_i,i}\cap \overline{\VV}$. This implies that $\VV^{A_i,i}\lo \VV$ and $\UU\cap \overline{\VV} \subset \VV^{A_i,i}\cap \VV$ and we are done.

\noindent\textbf{Case 2}: Suppose that $e=r$ so that $U_r\pq V_r$. Let $f$ be the largest index such that $U_f=V_f$. Again $f$ exists because $U_0 = V_0$. Now, since $\Delta$ is lexicographically shellable, there is $W_r$ such that $W_r\pq V_r$, $W_r\cap V_r$ is of dimension $r-1$ and $U_r\cap V_r\subseteq W_r\cap V_r$. Let $t$ be the largest index such that $V_t\subset W_r$. Then $t\geq f$. Now, take the chains
\begin{align*}
    &\WW=\{V_0\subset \dots \subset V_{f}\subset \dots \subset V_{t}\subset W_{t+1}\subset \dots \subset W_{r}\},\\
    &\VV=\{V_0\subset \dots \subset V_{f}\subset \dots \subset V_{t}\subset V_{t+1}\subset \dots \subset V_{r}\}.\\
\end{align*}
Since $\dim W_r = \dim V_r = r$ and $\dim W_r\cap V_r = r-1$, then $\dim W_r+V_r=r+1$, i.e., $W_r$ and $V_r$ are contained in a subspace $W_r+V_r$ of dimension $r+1$. Thus we can apply Corollary \ref{cor:4} to deduce that there exists a subspace $A_i$ of dimension $i$ such that $t<i\leq r$, $\VV^{A_i,i} \lo \VV$ and $\WW\cap \VV \subset \VV^{A_i,i}\cap \VV$. Since $\UU\cap \VV\subset \WW\cap \VV$, then we are done.
\end{proof}

\begin{rem}
Notice that in order to apply Corollary \ref{cor:4} (which is essentially Theorem \ref{thm:2.10}), we need to use the ordering $\preceq_q$. Due to this, Theorem \ref{thm:2.12} works only when the shelling of the $q$-complex is given by $\preceq_q$ i.e the $q$-complex is lexicographically shellable. It is still an open question whether the statement holds for any shelling of the $q$-complex.
\end{rem}

At this point one might wonder if the order complex of a lexicographically shellable $q$-complex, which is proved to be shellable, is a matroid complex. We show next that it is not true almost all the time, even if the $q$-complex is not shellable. 

\begin{pro}
Let $\Delta$ be a nontrivial $q$-matroid complex on $\Fq^n$ of dimension $d \geq 2$. Then its order complex $K(\Delta)$ is not a matroid complex.
\end{pro}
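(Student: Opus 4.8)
The plan is to show that $K(\Delta)$ fails the exchange (augmentation) axiom characterizing matroid complexes: a simplicial complex is a matroid complex if and only if for any two faces $F,G$ with $|F|<|G|$ there is a vertex $x\in G\setminus F$ with $F\cup\{x\}$ again a face. So it suffices to exhibit one pair of chains in $\Delta$ for which no such augmentation exists; equivalently, one produces a vertex subset $W$ on which $K(\Delta)$ restricts to a non-pure complex.

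The first step is to locate the offending configuration: subspaces $L,B_1,B_2\in\Delta$ with $\dim L=1$, $\dim B_1=\dim B_2=2$, $L\subseteq B_1$, and $L\not\subseteq B_2$ (so in particular $B_1\neq B_2$). If $d=\dim\Delta\geq 3$, I would fix a facet $B$ of $\Delta$ with $\dim B=d$; writing $B=L\oplus C$ with $\dim C=d-1\geq 2$, take any $2$-dimensional $B_1$ with $L\subset B_1\subseteq B$ and any $2$-dimensional $B_2\subseteq C$. Since every subspace of $B$ lies in $\Delta$, all three are faces of $\Delta$, and $B_2\cap L=\{0\}$ gives $L\not\subseteq B_2$. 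If $d=2$, then nontriviality forces $\Delta$ to have at least two facets (necessarily $2$-dimensional) — otherwise $\Delta=\Sigma(B)$ for a single $2$-dimensional $B$, which is excluded; note that for that $q$-simplex $K(\Delta)$ really is a matroid complex, so the hypothesis genuinely cannot be dropped. Given two distinct facets $B_1\neq B_2$, one has $\dim(B_1\cap B_2)\leq 1$, so at most one line of $B_1$ lies in $B_2$; since $B_1$ contains $q+1\geq 2$ lines, some line $L\subset B_1$ has $L\not\subseteq B_2$.

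With this configuration in hand, I would take $G=\{\,\{0\}\subset L\subset B_1\,\}$ and $F=\{\,\{0\}\subset B_2\,\}$, both chains in $\Delta$ (recall $\{0\}\in\Delta$), hence faces of $K(\Delta)$, with $|F|=2<3=|G|$. The vertices of $G$ outside $F$ are $L$ and $B_1$. Adjoining $L$ to $F$ yields $\{\{0\},L,B_2\}$, which is not a chain since $\dim L\neq\dim B_2$ and $L\not\subseteq B_2$; adjoining $B_1$ yields $\{\{0\},B_1,B_2\}$, which is not a chain since $B_1\neq B_2$ have equal dimension. Hence no augmentation of $F$ by a vertex of $G$ is a face, so $K(\Delta)$ is not a matroid complex. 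In purity language this is the statement that $K(\Delta)$ restricted to $W=\{\{0\},L,B_1,B_2\}$ has facets $\{\{0\},L,B_1\}$ and $\{\{0\},B_2\}$ of different cardinalities.

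I expect the only delicate point to be the $d=2$ case, precisely because of the genuine exception $\Delta=\Sigma(B)$ with $\dim B=2$: there one must invoke nontriviality to guarantee a second facet, and it is worth checking (for the sharpness of the statement) that $K(\Sigma(B))$ for $2$-dimensional $B$ is indeed a matroid complex. Everything else is forced by dimension counting inside a single facet, so no further obstacle is anticipated.
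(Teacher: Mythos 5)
Your proposal is correct and follows essentially the same route as the paper: both arguments invoke the augmentation axiom to force purity of restrictions and then exhibit a four-element vertex set of $K(\Delta)$, built from the zero space and low-dimensional faces of $\Delta$, on which the restricted complex has maximal chains of different lengths. The only cosmetic differences are the choice of witness (you use one line and two planes $\{\0, L, B_1, B_2\}$, while the paper uses two lines and one plane $\{\0, \langle x_1\rangle, \langle x_2\rangle, F_1\}$ drawn from two distinct $2$-dimensional faces) and your added sharpness remark about $\Sigma(B)$ with $\dim B = 2$.
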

\begin{proof}
 Let $E$ be the ground set of $K(\Delta)$, i.e. $E$ consists of all the faces of $\Delta$. If the order complex is a matroid complex, then for any subset $A \subseteq E$, the restricted complex $K(\Delta)|_{A}$ should be a pure simplicial complex. This is an easy consequence of the augmentation axiom of the independent sets of matroids (see, e.g., \cite[Exercise 7.3.1]{BJO92}). Since $\Delta$ is nontrivial, it has at least two distinct facets of dimension $d$, and therefore, there exists two distinct $2$-dimensional faces of $\Delta$, say, $F_1, F_2$. Since $F_1, F_2$ are distinct, there exist $x_1,x_2 \in \Fq^n$ such that $x_1 \in F_1 \backslash F_2$ and $x_2 \in F_2 \backslash F_1$. Now if we take $A = \{ \0, \langle x_1 \rangle, \langle x_2 \rangle, F_1 \}$, then $\0 \subseteq \langle x_1 \rangle \subseteq F_1$ and $\0 \subseteq \langle x_2 \rangle$ are two maximal chains of different lengths. Thus the restricted complex is not pure and hence we can conclude that the order complex is not a matroid complex.
\end{proof}

\begin{rem}\label{CMposets}
    We note that Theorem \ref{thm:2.12} implies that the Stanley-Reisner ring corresponding to the order complex of a lexicographically shellable simplicial complex is Cohen-Macaulay (over any field). Thus the posets underlying the lexicographically shellable $q$-complexes are Cohen-Macaulay posets. The Cohen-Macaulay posets were introduced by Baclawski \cite[Section 3]{Baclawski} in purely combinatorial way, whereas Reisner \cite{Reisner} and Stanley \cite{Stanley} independently gave a ring theoretic characterization of Cohen-Macaulay posets, with Reisner proving their equivalence in \cite{Reisner}. We leave open the question whether the underlying poset of any shellable $q$-complex is Cohen-Macaulay poset.
\end{rem}

\section{The homotopy type of a pure shellable \texorpdfstring{$q$}{q}-complex}\label{Sec:homotopy}

From a q-complex, we can build three different kinds of spaces from it. 
\begin{enumerate}
    \item A q-complex $\Delta$ is a partially ordered set that is also finite, therefore $\Delta$ determines a finite topological space by equipping the set $\Delta$ with the order topology. We write $\TT(\Delta)$ for this space. The space $\TT(\Delta)$ is in fact an Alexandroff $T_0$-space. 
    \item Every partially ordered set determines a simplicial set via the nerve construction. For a q-complex $\Delta$, we write $N(\Delta)$ for its nerve. The sets $N(\Delta)_i$ consist of the chains of subspaces of length $i$, that is, $V_0 \subseteq V_1 \subseteq \cdots V_i$. The face and degeneracy maps come from composition of inclusions (removing a subspace) or insertion of an equality (duplicate a subspace in the chain).

    \item Every simplicial set has a geometric realization, so a q-complex $\Delta$ also determines a topological space  $B(\Delta) = \lvert N(\Delta) \rvert$.
\end{enumerate}

\begin{rem}
    All of the above constructions yield contractible spaces due to the presence of the minimal element $\{0\} \in \Delta$ for every q-complex. It is therefore more useful for us to study the above constructions on the poset $\Deltao = \Delta \setminus \{ \{0\} \}$ where the zero subspace has been removed. We will therefore study the spaces $\TT(\Deltao)$, $N(\Deltao)$, and $B(\Deltao)$ instead.
\end{rem}

\begin{rem}
    The order complex $K(\Delta)$ of definition \ref{def:order-complex} is an abstract simplicial complex, not a simplicial set. However, the geometric realizations of $K(\Delta)$ and $N(\Delta)$ are homeomorphic, and we may therefore study the homotopy type of $N(\Delta)$ in the more suitable homotopy category of simplicial sets. A rough explanation for why these spaces are homeomorphic is that  the order complex $K(\Delta)$ records the non-degenerate simplices of $N(\Delta)$, and the geometric realization of $N(\Delta)$ is built out of the non-degenerate simplices. See \cite[\S8.1, Exercise 8.1.4]{Hbook} for details. 
\end{rem}

The partially ordered set $\Deltao$ has three different kinds of spaces associated to it, which live in three different model categories. 
In what sense are these constructions equivalent or different? 
Our motivation from coding theory motivates us to understand the homology groups of these spaces. We can in fact do better and understand the (weak) homotopy type of these spaces. Since integral homology is a weak homotopy invariant,  knowing the homotopy type of these spaces determines their homology \cite[Proposition 4.21]{Hatcher}. 

\subsection{Background on topology}

The basic theory of topological spaces, homotopy, and homology can be found in the books \cite{Sieradski}, \cite{Switzer}, and \cite{Hatcher}. For rhetorical purposes, we will emphasize the key ideas and constructions that will be used in the argument of this section. 

If $f_0: X \to Y$ and $f_1 :X\to Y$ are continuous maps between topological spaces, a \emph{homotopy} between the maps $f_0$ and $f_1$ is a continuous map $ H : X \times [0,1] \to Y$ that satisfies $f_0(x) = H(x,0)$ for all $x \in X$ and $f_1(x) = H(x,1)$ for all $x \in X$. Intuitively, a homotopy gives a continuous deformation of the function $f_0$ into $f_1$. 

Two topological spaces $X$ and $Y$ are \emph{homeomorphic} if there exists an invertible, continuous function $f : X \to Y$ whose inverse $f^{-1} : Y \to X$ is also continuous. The idea of \emph{homotopy equivalence} is to weaken the notion of homeomorphism, so that a map $f :X \to Y$ is not strictly invertible, but instead, invertible ``up to homotopy.'' A continuous function $f: X \to Y$ is a \emph{homotopy equivalence} if there exists a continuous function $g : X \to Y$ such that $g\circ f$ is homotopic to $\id_X$ and $f \circ g$ is homotopic to $\id_Y$. The notion of homotopy equivalence acts as an equivalence relation on the category of topological spaces. As such, the category of topological spaces is partitioned up into equivalence classes for the homotopy equivalence relation. The strict homotopy type of a space is the equivalence class of the space under the homotopy equivalence relation.

The notion of strict homotopy equivalence is a bit too strict. The notion of weak homotopy equivalence and weak homotopy type turns out to better reflect our intuition in homotopy theory. A map $f : X \to Y$ is a \emph{weak homotopy equivalence} if the induced map $f_* : \pi_n X \to \pi_n Y$ is an isomorphism for all natural numbers $n$. (We gloss over the discussion on basepoints here). The notion of weak homotopy equivalence again acts as an equivalence relation on topological spaces, and to understand the (weak) homotopy type of a topological space is to determine its equivalence class for the relation of weak homotopy equivalence. 

The $CW$-approximation theorem \cite[Proposition 4.13]{Hatcher} says that every topological space has the weak homotopy type of a $CW$-complex. Note that $CW$ complexes are especially nice kinds of spaces, built up inductively by gluing together euclidean balls of increasing dimensions. The benefit of this theorem is that we can always describe the weak homotopy type of a space with a $CW$-complex, and the simplest kinds of $CW$-complexes are spheres $S^n$ and wedge-sums of spheres---much like $\bbZ$ and finite direct sums of $\bbZ$ are the simplest kinds of abelian groups up to isomorphism.

\subsection{Relevant constructions interpreted with model categories}

Recall that to a $q$-complex $\Delta$, we have defined three different kinds of spaces: $\TT(\Deltao)$, $N(\Deltao)$, and $B(\Deltao)$. These three spaces live in different categories, but are all equivalent in a precise sense.  
With the framework of model categories, it has been shown that the three relevant model categories these spaces live in are all equivalent.
The framework of model categories and the equivalences between the categories serve as a dictionary to translate between the different spaces and their properties. 
We summarize the key results now. 

The category of posets $\Pos$ is isomorphic to the category of $T_0$ Alexandroff spaces $\calA$ \cite[Proposition 4.2]{Raptis}.
The isomorphism is given by the functors $\TT : \Pos \to \calA$ and $P : \calA \to \Pos$, which are defined in \cite[Proposition 4.2]{McCord}. 
There are thus two evident choices of weak equivalences in the categories $\calA \cong \Pos$. One choice, when viewing $X, Y\in\calA$ as topological spaces is to use the usual notion of weak homotopy equivalence, essentially, $f : X \to Y$ is a weak homotopy equivalence if it induces isomorphisms in all homotopy groups. 
The other notion is to use the nerve functor $N : \Pos \to \Sset$ to define weak equivalences. A morphism $f : C \to D$ in $\Pos$ is a weak equivalence if the corresponding map of simplicial sets $N (f) : N(C) \to N(D)$ is a weak equivalence of simplicial sets.
McCord has shown that these two notions of weak equivalence coincide \cite[Theorem 4.5]{Raptis}, \cite{McCord}.
As a consequence, every $A$-space $X \in \calA$ is weak homotopy equivalent to its classifying space $B X$. 
Raptis has also developed a model category structure on $\Pos$ in \cite[Theorem 2.6]{Raptis} that fits into a zig-zag of Quillen equivalences $\Pos \to \Sset \leftarrow \Top$, enriching McCord's result. 
\begin{equation*}
\label{eq:quillen_equivalences}
\xymatrix{
\calA \ar@{{<}{-}{>}}[r]^{P}_{\TT} & \Pos \ar[r]^i & \Cat \ar[r]^{\Ex^2 N}& \Sset & \Top \ar[l]_{S_*}
}
\end{equation*}
The above diagram appeared in \cite[p 223]{Raptis}. Here $S_* : \Top \to \Sset$ is the singular chains functor. Note that the functor $\Cat \to \Sset$ is not just the nerve functor, but rather $\Ex^2 N$. Since $\Ex : \Sset \to \Sset$ is a Quillen equivalence too, the natural map $N(C) \to \Ex^2 N(C)$ is a weak equivalence for any small category (or poset) $C \in \Cat$.  The use of $\Ex^2$ is necessary for certain properties of a Quillen equivalence to hold. See Thomason's paper where the model category structure on $\Cat$ is introduced \cite{Thomason}.  

We show these equivalences to suggest how to work and think about the various spaces that can be constructed out of a q-complex. All of the constructions land in one of these Quillen-equivalent categories, and at the level of homotopy categories, all of the constructions correspond to one another up to homotopy via the categorical equivalences in the diagram below.

\begin{equation}
\label{eq:ho_cats}
\xymatrix{
\calH(\calA) \ar[r]^{P} & \calH(\Cat) \ar[r]^{N} & \calH(\Sset) \ar[r]_{\lvert - \rvert} & \calH(\Top) \ar[l]_{S_*}
}
\end{equation}

As a practical matter, we can think of a sphere $S^n$ in any one of these categories by using these equivalences. The simplest illustration of this is that the standard $n$-simplex $\Delta^n$ is a simplicial set whose boundary $\partial \Delta^n$ models the sphere $S^{n-1}$ in the category of simplicial sets. It may be readily verified that indeed the geometric realization of $\partial \Delta^n$ is weak equivalent to 
the euclidean sphere $S^{n-1}$ in $\Top$. Since $\partial \Delta^n$ is a simplicial set model for the homotopy type of the sphere $S^{n-1}$, and the homotopy categories $\calH(\Sset)$ and $\calH(\Top)$ are isomorphic, we may abuse notation and write $S^{n-1}$ for the homotopy type of $\partial \Delta^n$ in $\Sset$. 

\subsection{Proof of the result}

Our main theorem will establish the homotopy type of the spaces associated to a pure, shellable, q-complex, which matches well with the corresponding result about shellable simplicial complexes. The proof given below can be easily simplified to give a proof in the simplicial complex case. 

\begin{thm}
\label{thm:main}
Let $\Delta$ be a pure, shellable, q-complex of dimension $k$ that arises from an $n$-dimensional vector space over a finite field $F = \bbF_q$. The homotopy type of the spaces $\TT(\Deltao)$, $N(\Deltao)$, and $B(\Deltao)$ is that of a wedge sum of spheres of dimension $k-1$. 
\end{thm}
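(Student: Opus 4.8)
The plan is to reduce the statement to a single model and then run a double induction that mimics the classical ``shellable $\Rightarrow$ wedge of spheres'' argument, but carried out topologically rather than by shelling the order complex. By the equivalences recorded in \eqref{eq:ho_cats}, the spaces $\TT(\Deltao)$, $N(\Deltao)$ and $B(\Deltao)=|N(\Deltao)|$ all have the same (weak) homotopy type, and $B(\Deltao)$ is homeomorphic to the realization $|K(\Deltao)|$ of the order complex; so it is enough to prove $|K(\Deltao)|\simeq\bigvee S^{k-1}$, a wedge of finitely many $(k-1)$-spheres (the empty wedge being a point). I would induct on $k=\dim\Delta$, the cases $k\le 1$ being immediate since then $\Deltao$ is an antichain and $|K(\Deltao)|$ a finite discrete set. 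For $k\ge 2$ I would fix a shelling $F_1,\dots,F_t$, put $\Delta_j=\langle F_1,\dots,F_j\rangle$ (each $\Delta_j$ is again pure shellable of dimension $k$, with facets $F_1,\dots,F_j$), and do an inner induction on $j$; the base $j=1$ holds because $\Delta_1^{\circ}=\Sigma(F_1)\setminus\{0\}$ has a greatest element $F_1$, hence contractible classifying space.

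For the passage from $\Delta_{j-1}$ to $\Delta_j$, set $P_j=\Sigma(F_j)\setminus\{0\}$ and $Q_j=\Delta_{j-1}^{\circ}\cap P_j$. The key combinatorial observation is that every chain in $\Delta_j^{\circ}$ lies entirely in $\Delta_{j-1}^{\circ}$ or entirely in $P_j$: if a chain contains some $W\subseteq F_j$ with $W\notin\Delta_{j-1}$, then any member of the chain strictly above $W$ that is not contained in $F_j$ would have to lie in $\Delta_{j-1}$, forcing $W\in\Delta_{j-1}$. Hence $K(\Delta_j^{\circ})=K(\Delta_{j-1}^{\circ})\cup K(P_j)$ with intersection $K(Q_j)$, and since $|K(Q_j)|\hookrightarrow|K(P_j)|$ is a cofibration, realizing this square gives a homotopy pushout. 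As $P_j$ has a greatest element, $B(P_j)\simeq\ast$, so $B(\Delta_j^{\circ})$ is the homotopy cofibre of $\iota_j\colon B(Q_j)\hookrightarrow B(\Delta_{j-1}^{\circ})$. Next, by Definition~\ref{def:q-shellable} every $F_i\cap F_j$ ($i<j$) is contained in some $F_m\cap F_j$ of dimension $k-1$ with $m<j$, so $\widetilde Q_j:=Q_j\cup\{0\}$ is a pure $q$-complex of dimension $k-1$ whose facets are codimension-one subspaces of the \emph{single} space $F_j$; any two distinct such facets meet in dimension $k-2$, so every ordering of them satisfies Definition~\ref{def:q-shellable} and $\widetilde Q_j$ is shellable. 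The outer induction hypothesis (dimension $k-1<k$) then gives $B(Q_j)=B(\widetilde Q_j^{\circ})\simeq\bigvee S^{k-2}$, while the inner one gives $B(\Delta_{j-1}^{\circ})\simeq\bigvee S^{k-1}$, which is $(k-2)$-connected; therefore $\iota_j$ is null-homotopic (a map from a wedge of $(k-2)$-spheres into a path-connected, $(k-2)$-connected space). Since the cofibre of a null-homotopic map $A\to X$ is $X\vee\Sigma A$, this yields
\[
B(\Delta_j^{\circ})\;\simeq\;B(\Delta_{j-1}^{\circ})\vee\Sigma B(Q_j)\;\simeq\;\Big(\bigvee S^{k-1}\Big)\vee\Big(\bigvee S^{k-1}\Big)\;=\;\bigvee S^{k-1},
\]
and taking $j=t$ completes both inductions.

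The step I expect to be the main obstacle is precisely recognizing that one must \emph{not} try to show $K(\Deltao)$ is shellable — for a general (non-lexicographic) shelling of $\Delta$ this is open, cf.\ the remark after Theorem~\ref{thm:2.12} — and instead carry out the gluing argument topologically. The correct $q$-analogue of ``glue on a top-dimensional cell along its boundary sphere'' is ``form the homotopy cofibre along $B(Q_j)$'', and the essential new feature compared with the simplicial case is that $B(Q_j)$ is typically not a single $(k-2)$-sphere but (a subcomplex of the Tits building of $\mathrm{GL}_k(\F_q)$, hence) a wedge of possibly many $(k-2)$-spheres; consequently a single ``spanning'' restriction may create many $(k-1)$-spheres at once, which explains why the precise enumeration of the spheres is deferred to the order-complex computation of Section~\ref{sec:4}. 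One also needs a little care with the low-dimensional cases ($k=1,2$) and with empty-wedge conventions when invoking the null-homotopy.
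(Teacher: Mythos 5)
Your proposal is correct and takes essentially the same route as the paper's proof: add one facet at a time, identify each step as a homotopy pushout in which the cone $\langle F_j\rangle$ is contractible and the intersection is a pure codimension-one (hence automatically shellable) $q$-complex of dimension $k-1$ inside $F_j$, then conclude via the null-homotopic attaching map and the cofiber identification $X\vee\Sigma A$ that the result is a wedge of $(k-1)$-spheres. The only difference is organizational: the paper first proves the codimension-one case as a separate lemma by induction on the ambient dimension and then runs the induction on $k$, whereas you fold both into a single induction on $\dim\Delta$ by applying the outer hypothesis directly to $\widetilde{Q}_j\subset F_j$ (and you spell out the chain-decomposition fact that makes the square a strict pushout, which the paper leaves implicit).
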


We prove the result for the spaces $N(\Deltao)$, as the homotopy type of the other spaces is then determined from the properties of the equivalences in diagram \ref{eq:ho_cats}. 
The proof is broken into two steps: (1) $\Delta$ is a codimension 1 pure, shellable, q-complex in $\bbF_q^n$, induction on $n$; (2) $\Delta$ is a pure, shellable, q-complex of dimension $k$, induction on $k$. 

The key idea of the proof is that by focusing only on the weak homotopy type of the spaces, we can exploit the shellability property by using homotopy pushouts to identify the correct homotopy type of the resulting space at each stage of the construction given by the shelling. The interested reader can learn more about homotopy pushouts and homotopy colimits in \cite{BousfieldKan} and \cite{MayPonto}. The style of argument is very similar to the way one can compute the homology of these spaces with the Mayer-Vietoris sequence, we just have to be more careful about the details in finding the correct homotopy type of the result of the gluings using homotopy colimits. 

\begin{lem}
\label{lem:codim1-shellable}
Every pure q-complex $\Delta$ in $\bbF_q^n$ of dimension $n-1$ is shellable.  
\end{lem}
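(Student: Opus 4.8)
The plan is to observe that the facets of $\Delta$, all of dimension $n-1$, are precisely the hyperplanes of $E = \Fq^n$ that lie in $\Delta$, and then to exploit the elementary fact that any two distinct hyperplanes of an $n$-dimensional space meet in a subspace of dimension exactly $n-2$. This reduces the shellability condition to something that holds automatically.

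First I would record that, since $\Delta$ is pure of dimension $n-1$, every facet $F$ of $\Delta$ satisfies $\dim F = n-1$, so $F$ is a hyperplane of $E$. The key step — and essentially the only one with any content — is the dimension count: if $F$ and $F'$ are distinct hyperplanes of $E$, then $F + F'$ strictly contains $F$, which forces $\dim(F+F') = n$, and the dimension formula then yields $\dim(F \cap F') = (n-1) + (n-1) - n = n-2$. In particular, the codimension-one intersection requirement in Definition \ref{def:q-shellable} is automatic for any two distinct facets of $\Delta$.

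With that in hand, I would conclude that \emph{any} total order $F_1, \dots, F_t$ on the facets of $\Delta$ is a shelling order. Given indices $i < j \le t$, one simply takes $k = i$: then $k < j$, the containment $F_i \cap F_j \subseteq F_k \cap F_j$ is a trivial equality, and by the dimension count $\dim(F_k \cap F_j) = \dim(F_i \cap F_j) = n-2 = r-1$, where $r = n-1 = \dim \Delta$. If $\Delta$ has at most one facet there are no such pairs and the condition is vacuous. The upshot is that there is no genuine obstacle here: once the hyperplane dimension count is noted, the shelling condition of Definition \ref{def:q-shellable} is satisfied for free by the choice $k = i$, and the lemma follows immediately.
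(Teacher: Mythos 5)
Your proof is correct and follows essentially the same route as the paper: the whole content in both is the observation that two distinct hyperplanes of $\bbF_q^n$ intersect in dimension exactly $n-2$, so that any ordering of the facets satisfies the shelling condition. Your explicit use of Definition \ref{def:q-shellable} with the choice $k=i$ is just a slightly more spelled-out version of the paper's argument, which phrases the same dimension count in terms of the subcomplex $\langle F_1,\ldots,F_i\rangle \cap \langle F_{i+1}\rangle$ being generated by the codimension-one subspaces $F_j \cap F_{i+1}$.
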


\begin{proof}
The $q$-complex $\Delta$ is generated by a finite list of $n-1$ dimensional subspaces of $\bbF_q^n$, which we enumerate as $F_1,\ldots, F_k$.
Any ordering of the facets $F_1,\ldots, F_k$ is a shelling of $\Delta$. The key observation is that $\langle F_1,\ldots F_i\rangle \cap \langle F_{i+1} \rangle$ is generated by the subspaces $F_j \cap F_{i+1}$ for $1\leq j \leq i$, all of which have dimension $n-2$ since $F_j \neq F_{i+1}$ are distinct codimension 1 subspaces of $\bbF_q^n$. 
\end{proof}

\begin{lem}
\label{lem:codim1}
Let $\Delta$ be a pure q-complex of codimension 1 in $\bbF_q^n$. The homotopy type of $N(\Deltao)$ is that of a wedge sum of spheres of dimension $n-2$. 
\end{lem}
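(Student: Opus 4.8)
The plan is to prove this by a double induction: an outer induction on the ambient dimension $n$, and, for each fixed $n$, an inner induction on the number $k$ of facets of $\Delta$, arranged so that one adjoins a single facet at each stage exactly as in the shelling produced by Lemma~\ref{lem:codim1-shellable}. Throughout I allow an empty wedge of spheres (a point) as a degenerate case; for a subspace $U\in\Sigma(E)$ I write $\langle U\rangle$ for the $q$-complex of all subspaces of $U$, and for a $q$-complex $\Gamma$ I write $\Gamma^{\circ}=\Gamma\setminus\{\0\}$. The base case $n=2$ is immediate: the nonzero proper subspaces of $\bbF_q^{2}$ are lines through the origin, so if $\Delta$ has $k$ facets then $\Delta^{\circ}$ is a discrete poset on $k$ elements and $N(\Delta^{\circ})$ is $k$ disjoint points, a wedge of $k-1$ copies of $S^{0}=S^{n-2}$.

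For the inductive step I would fix $n\geq 3$, assume the statement in ambient dimension $n-1$, list the facets of $\Delta$ as $F_1,\dots,F_k$, and set $\Delta_i=\langle F_1,\dots,F_i\rangle$, again a pure codimension-$1$ $q$-complex in $\bbF_q^{n}$ with facets $F_1,\dots,F_i$. I would then prove by induction on $i$ that $N(\Delta_i^{\circ})$ has the homotopy type of a wedge of $(n-2)$-spheres, the case $i=k$ being the lemma. For $i=1$ the poset $\Delta_1^{\circ}$ of nonzero subspaces of $F_1$ has the maximum element $F_1$, so $N(\Delta_1^{\circ})$ is contractible. For the passage $i-1\to i$ with $i\geq 2$, the crucial input, read off from the proof of Lemma~\ref{lem:codim1-shellable}, is that $\Delta_{i-1}\cap\langle F_i\rangle$ is generated by the subspaces $F_j\cap F_i$, $1\leq j\leq i-1$, each of dimension $n-2$ since $F_j\neq F_i$ are distinct hyperplanes of $\bbF_q^{n}$; hence $\Delta_{i-1}\cap\langle F_i\rangle$ is a pure codimension-$1$ $q$-complex inside $F_i\cong\bbF_q^{n-1}$, and the outer induction hypothesis gives that $N\big((\Delta_{i-1}\cap\langle F_i\rangle)^{\circ}\big)$ is a wedge of $(n-3)$-spheres.

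The heart of the argument is to exhibit $N(\Delta_i^{\circ})$ as a homotopy pushout. I would first check the combinatorial fact that every chain in $\Delta_i^{\circ}$ lies entirely in $\Delta_{i-1}^{\circ}$ or entirely in $\langle F_i\rangle^{\circ}$: if a member $V$ of a chain is not in $\Delta_{i-1}$ then $V\subseteq F_i$ (as $V\subseteq F_j$ for some $j\leq i$ but no $j<i$), and then, since $q$-complexes are closed under passing to subspaces, every other member of the chain must also lie in $\langle F_i\rangle$. Consequently $N(\Delta_i^{\circ})$ is the union of the sub-simplicial-sets $N(\Delta_{i-1}^{\circ})$ and $N(\langle F_i\rangle^{\circ})$ along their intersection $N\big((\Delta_{i-1}\cap\langle F_i\rangle)^{\circ}\big)$, i.e.
\[
N(\Delta_i^{\circ})\;=\;N(\Delta_{i-1}^{\circ})\ \cup_{\,N((\Delta_{i-1}\cap\langle F_i\rangle)^{\circ})}\ N(\langle F_i\rangle^{\circ}).
\]
Since the right-hand leg is a monomorphism of simplicial sets, this pushout is a homotopy pushout; and since $\langle F_i\rangle^{\circ}$ has a maximum element, $N(\langle F_i\rangle^{\circ})$ is contractible, so $N(\Delta_i^{\circ})$ is the homotopy cofiber of $g\colon N\big((\Delta_{i-1}\cap\langle F_i\rangle)^{\circ}\big)\to N(\Delta_{i-1}^{\circ})$. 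By homotopy invariance of homotopy pushouts I may replace the source and target of $g$ by homotopy equivalent CW complexes supplied by the two induction hypotheses, namely a wedge $A$ of $(n-3)$-spheres (of dimension $n-3$) and a wedge $X$ of $(n-2)$-spheres. As $X$ is $(n-3)$-connected and $\dim A=n-3$, obstruction theory shows $g$ is null-homotopic, and the cofiber of a null-homotopic map $A\to X$ is $X\vee\Sigma A$, a wedge of $(n-2)$-spheres because $\Sigma$ takes a wedge of $(n-3)$-spheres to a wedge of $(n-2)$-spheres. This closes the inner and outer inductions.

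The step I expect to be the main obstacle is this final gluing: one must verify that the nerve genuinely presents $N(\Delta_i^{\circ})$ as a pushout (the ``no mixed chains'' check), that the pushout is a \emph{homotopy} pushout (this is exactly what the monomorphism hypothesis buys, avoiding point-set subtleties), and that the connectivity bound on $X$ together with $\dim A=n-3$ really forces $g$ to be null-homotopic --- without this the homotopy cofiber need not be a wedge of spheres. The base case $n=2$ and the first inductive case $n=3$ (where $A$ is $0$-dimensional and $X$ is a wedge of circles) should be inspected by hand to be sure the dimension and connectivity bookkeeping is not off by one.
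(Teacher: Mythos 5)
Your proposal is correct and follows essentially the same route as the paper: an outer induction on the ambient dimension together with an inner induction adjoining facets one at a time (using Lemma \ref{lem:codim1-shellable}), realizing each step as a homotopy pushout whose intersection piece is a pure codimension-one $q$-complex inside the new facet, and identifying the result as the mapping cone of a null-homotopic map, giving $\bigvee S^{n-2}\vee\Sigma\bigvee S^{n-3}$. Your explicit ``no mixed chains'' verification that the nerve of the union is the pushout of nerves is a welcome elaboration of what the paper asserts via the cofibration/monomorphism property, but it is the same argument.
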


\begin{proof}
To avoid trivialities, consider $n = 2$ as the base case, in which case $\Deltao$ is just a set of $1$-dimensional subspaces of $\bbF_q^2$. No two distinct elements of $\Deltao$ are comparable, hence $N(\Deltao)$ has the homotopy type of a finite set of points, i.e., a wedge sum of $0$-dimensional spheres. 

Now make the induction hypothesis, with $n \geq 2$, that any pure, codimension 1 q-complex $\Delta$ of $\bbF_q^n$ has the homotopy type of a wedge of $n-2$-dimensional spheres $\bigvee_{I} S^{n-2}$ where $I$ is some finite indexing set depending on $\Delta$. 

Consider now $\Delta$ a pure, codimension 1 q-complex of $\bbF_q^{n+1}$. By lemma \ref{lem:codim1-shellable}, any ordering of the facets of $\Delta$ is a shelling. Write $F_1,\ldots, F_k$ for some ordering of the facets of $\Delta$. We build $N(\Deltao)$ one facet at a time and determine its homotopy type at each stage. Write $\langle F_1, \ldots, F_j \rangle$ for the nerve of the q-complex generated by the facets $F_1,\ldots F_j$, but with the 0-dimensional subspace removed. Note all such simplicial sets are subcomplexes of $N(\Deltao)$. 

To start, $\langle F_1 \rangle$ is contractible since the partially ordered set defining it contains a terminal element $F_1$ \cite[IV Example 3.2.2]{Kbook}. Suppose now as an induction hypothesis that $\langle F_1, \ldots F_j \rangle$ has the homotopy type of a wedge of $n-1$ dimensional spheres. We now see what the effect of adding the facet $F_{j+1}$ is on the homotopy type of our space. The diagram below is a homotopy push-out square, since the upper row and left column are cofibrations \cite[Ch. XII, \S3, 3.1 Examples]{BousfieldKan}. 
\begin{equation*}
  \xymatrix{
    \langle F_1,\ldots F_j \rangle \cap \langle F_{j+1} \rangle \ar[d] \ar[r] & \langle F_1, \ldots, F_j\rangle \ar[d] \\
    \langle F_{j+1} \rangle \ar[r] & \langle F_1, \ldots F_{j+1} \rangle \\
  }
\end{equation*}
Thus the space $\langle F_1, \ldots, F_{j+1} \rangle$ models the homotopy push-out of $\langle F_{j+1} \rangle \leftarrow \langle F_1, \ldots F_j \rangle \cap \langle F_{j+1} \rangle \to \langle F_1, \ldots F_j \rangle$. Our induction hypothesis allows us to compute this homotopy push-out with the equivalent diagram
\begin{equation*}
\xymatrix{
\langle F_{j+1} \rangle \ar[d]^{\simeq}
& \langle F_1, \ldots, F_j \rangle \cap \langle F_{j+1} \rangle \ar[d]^{\simeq} \ar[r]  \ar[l]
& \langle F_1, \ldots, F_j \rangle \ar[d]^{\simeq}
\\
\text{pt}
& \bigvee_I S^{n-2} \ar[l] \ar[r]
& \bigvee_J S^{n-1} \\
}
\end{equation*}
The left hand homotopy equivalence is due to $\langle F_{j+1} \rangle$ containing a terminal element, and so is contractible. The right most homotopy equivalence comes from our induction hypothesis. 
The middle homotopy equivalence comes from the observation that $\langle F_1, \ldots F_j \rangle \cap \langle F_{j+1}\rangle$ is a subcomplex of $\langle F_{j+1} \rangle$, which arises from a pure q-complex of codimension 1 in the $\bbF_q$-vector space $F_{j+1}$ of dimension $n$ by the defining property of a shelling (as given in \cite[Definition 1.5.1.]{Alder}).
%\glen{Perhaps the use of the shellability property should be more clear? \rakhi{But the last line in the previous paragraph correctly mentions the shelling property. Isn't it?} \tovo{It is the ``codimension 1'' property.} Yes, I added the mention of shellability in the previous sentence when I wrote this comment. But I notice the definition of shellability used here is not the definition of shellability given in the paper. It should be clear to the experienced reader, but maybe a reference to the other definition of shellability would help?}
As such, our induction hypothesis applies to show that $\langle F_1, \ldots F_j \rangle \cap \langle F_{j+1} \rangle$ has the homotopy type of a wedge sum of spheres of dimension $n-2$. 

To conclude, the homotopy push-out of the diagram 
\begin{equation*}
\xymatrix{
\text{pt}
& \bigvee_I S^{n-2} \ar[l] \ar[r]
& \bigvee_J S^{n-1} \\
}
\end{equation*}
is simply the mapping cone of the map $\bigvee_I S^{n-2} \to \bigvee_J S^{n-1}$. Since the homotopy group $\pi_{n-2} (\bigvee_J S^{n-1})$ is trivial for all $n \geq 2$, the map is homotopic to a constant map. Therefore the mapping cone is simply $\bigvee_J S^{n-1} \vee \Sigma \bigvee_I S^{n-2} \simeq \bigvee_J S^{n-1} \vee \bigvee_I S^{n-1}$, where $\Sigma X$ denotes the reduced suspension of a space $X$. The result now follows. 
\end{proof}

We can now prove the main result with essentially the same argument. 

\begin{proof}[Proof of Theorem \ref{thm:main}]
We run induction on the dimension $k$ of the pure, shellable, q-complex $\Delta$ in $\bbF_q^n$. The case $k=1$ is immediate, as $N(\Deltao)$ is just a finite set of points. Assume that all pure, shellable q-complexes of dimension $k$ have the homotopy type of a wedge of $k-1$-dimensional spheres, with $k \geq 1$. Consider now a pure, shellable, q-complex of dimension $k+1$. We are given an ordering of its facets $F_1, \ldots F_\ell$. We inductively add facets one at a time and keep track of how the homotopy type of its nerve changes.

We start with $\langle F_1 \rangle$, which is a contractible simplicial set as its defining poset contains $F_1$ as a maximal element. Suppose now for an induction argument that $\langle F_1, \ldots , F_j \rangle$ has the homotopy type of a wedge sum of spheres $\bigvee_I S^{k}$. Adjoining one more facet yields the following homotopy pushout diagram
\begin{equation*}
  \xymatrix{
    \langle F_1,\ldots F_j \rangle \cap \langle F_{j+1} \rangle \ar[d] \ar[r] & \langle F_1, \ldots, F_j\rangle \ar[d] \\
    \langle F_{j+1} \rangle \ar[r] & \langle F_1, \ldots F_{j+1} \rangle \\
  }
\end{equation*}
The key observation is that $\langle F_1,\ldots F_j \rangle \cap \langle F_{j+1} \rangle$ comes from a pure $q$-complex of codimension 1 in the vector space $F_{j+1}$ of dimension $k$ by the shellability property. Hence lemma \ref{lem:codim1} shows that this space has the homotopy type $\bigvee_J S^{k-1}$. The same considerations as in lemma \ref{lem:codim1} allow us to identify this homotopy push-out as the mapping cone of the homotopically trivial map $\bigvee_J S^{k-1}  \to \bigvee_I S^k$, which gives the result. 
\end{proof}

\begin{cor}
If $\Delta$ is a pure, shellable $q$-complex of dimension $k$, then the reduced integral homology of the spaces $\TT(\Deltao)$, $N(\Deltao)$, $B(\Deltao)$, $K(\Deltao)$ is concentrated in a single degree. 
\end{cor}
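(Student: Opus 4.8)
The plan is to read off the corollary directly from Theorem~\ref{thm:main}. First I would recall that, by that theorem, each of $\TT(\Deltao)$, $N(\Deltao)$, and $B(\Deltao)$ has the (weak) homotopy type of a wedge $\bigvee_{I} S^{k-1}$ for some index set $I$ depending only on $\Delta$. For the fourth space $K(\Deltao)$ I would invoke the standard fact, already noted in the remark following Definition~\ref{def:order-complex}, that the geometric realization $\lvert K(\Deltao)\rvert$ is homeomorphic to $\lvert N(\Deltao)\rvert$: the order complex records precisely the non-degenerate simplices of the nerve, and the realization of a simplicial set is built from its non-degenerate simplices (see \cite[\S8.1]{Hbook}). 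Hence $K(\Deltao)$ also has the homotopy type of $\bigvee_I S^{k-1}$.

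Next I would use that reduced integral homology is a weak homotopy invariant \cite[Proposition 4.21]{Hatcher}, so that all four spaces have the same reduced integral homology as $\bigvee_I S^{k-1}$. It then remains to compute $\redH_m\!\left(\bigvee_I S^{k-1};\bbZ\right)$: since reduced homology carries wedge sums of (well-pointed) CW complexes to direct sums, this group is $\bigoplus_I \bbZ$ when $m=k-1$ and $0$ otherwise. Therefore the reduced integral homology of $\TT(\Deltao)$, $N(\Deltao)$, $B(\Deltao)$, and $K(\Deltao)$ is concentrated in degree $k-1$, which is the assertion. I would add a remark handling the degenerate cases: when $k=1$ the wedge is a finite set of points and the single nonzero group sits in degree $0$, and when $\Delta$ has a single facet the spaces are contractible and all reduced homology vanishes, which still counts as being concentrated in a single degree.

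There is essentially no genuine obstacle here, as the corollary is a formal consequence of Theorem~\ref{thm:main} together with homotopy invariance and the additivity of reduced homology on wedges; the only point requiring a separate (but routine) justification is the homeomorphism $\lvert K(\Deltao)\rvert \cong \lvert N(\Deltao)\rvert$ needed to bring the order complex under the umbrella of the main theorem.
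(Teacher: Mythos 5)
Your proposal is correct and follows essentially the same route as the paper: the paper also deduces the corollary directly from Theorem~\ref{thm:main} (all the spaces being finite wedges of spheres of one dimension) and then applies the wedge and dimension axioms of homology, with the identification of $K(\Deltao)$ with $N(\Deltao)$ already covered by the remark after Definition~\ref{def:order-complex}. Your extra attention to the degenerate cases is a harmless elaboration, not a difference in method.
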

\begin{proof}
As all of the spaces are a finite wedge sum of spheres of the same dimension, the wedge axiom and dimension axiom for Eilenberg--Steenrod homology theories yield the result \cite[\S2.3]{Hatcher}. 
\end{proof}

\begin{rem}
The proof of theorem \ref{thm:main} gives an algorithm for computing the homology of a pure shellable $q$-complex. One recursively identifies the homology groups $\redH_*(\langle F_1, \ldots , F_j \rangle \cap \langle F_{j+1}\rangle)$ and $\redH_*(\langle F_1, \ldots F_j\rangle)$, while  $\redH_*(\langle F_{j+1} \rangle) = 0$. The map in homology  $\redH_*(\langle F_1, \ldots , F_j \rangle \cap \langle F_{j+1}\rangle) \to \redH_*(\langle F_1, \ldots F_j\rangle)$ is trivial for dimension reasons. Then the Mayer--Vietoris sequence yields the isomorphism
\begin{equation*}
\redH_n(\langle F_1, \ldots F_{j+1}\rangle ) \cong \redH_{n-1}(\langle F_1, \ldots F_{j} \rangle \cap \langle F_{j+1} \rangle) \oplus \redH_n(\langle F_1, \ldots F_{j} \rangle).
\end{equation*}
\end{rem}

\begin{rem}
The same style of argument above also proves that the homotopy type of a shellable simplicial set is a wedge of spheres too. The only difference in the argument is that when adding the next simplex $\Delta^n$, its intersection with the complex constructed up to that point is a pure simplicial set of dimension $n-1$ that is also a subcomplex of $\partial \Delta^n$. In this case, though, the homotopy type of a pure $n-1$-dimensional subcomplexes of $\partial \Delta^n$ is either that of a sphere $\partial \Delta^n$ or contractible. But for the case of $q$-complexes, the homotopy type of these intersections can vary much more, being either contractible or a wedge sum of spheres. The number can vary from 0 up to the number of spheres in the $q$-sphere of the facet $F_i$. 
\end{rem}

Since we now know that the homotopy type of the order complex $K(\Deltao)$ for a pure, shellable $q$-complex is a wedge sum of spheres, we can now identify exactly how many spheres are in the wedge sum by calculating the rank of its homology groups. We carry out this calculation in the next section.

\section{Homology of some classes of shellable \texorpdfstring{$q$}{q}-complexes}\label{sec:4}

%\tovo{In this section, use mathfrak for the name of a chain and use braces for the chains. Also, we need to make the notations consistent. also replace $<$ by $\subset$}

In this section, we make use of the shellability of the order complexes proved in the previous section to determine the singular homology of lexicographically shellable $q$-complexes. It completes the determination of singular homology of $q$-matroid complexes which was partially done in \cite[Theorem 6.10]{GPR21} by a method parallel to the classical one for shellable simplicial complexes. 

\begin{definition}
    Let $K$ be a shellable simplicial complex whose shelling is given by $F_1,\dots,F_t$. For a facet $F_i$ in $K$, the restriction of $F$ is defined as $\R(F_i)=\{x\in F_i\colon F_i\setminus\{x\}\in K_{i-1}\}$, where $K_{i-1}$ is the simplicial complex generated by $F_1,\dots,F_{i-1}$.
\end{definition}
 \begin{pro}\label{PRO:4.4}
 Let $\Delta$ be a lexicographically shellable $q$-complex on $\Fq^n$ of dimension $r$ and $K(\Delta)$ be the corresponding order complex. Let $F_1, \ldots, F_t$ be the shelling associated to $\preceq_q$ on $\Delta$ and $\preceq_l$ be the shelling on the order complex $K(\Delta)$. Then for a maximal chain $\UU = \{U_0 \subset U_1 \cdots \subset U_r=F_j\}$ in $K(\Delta)$, $\R(\UU) = \UU$ if and only if there exists $1 \leq i <j\leq t$ such that $U_{r-1} \subseteq F_i$ and $U_k \neq \min_{\prec_q} \{ A : U_{k-1} \subset A \subset U_{k+1}\}$ for $1 \leq k < r$. 
 \end{pro}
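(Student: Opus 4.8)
The plan is to unwind the definition of the restriction $\R(\UU)$ of a facet $\UU$ of the order complex $K(\Delta)$ in terms of the shelling $\preceq_l$, and to match it against the two combinatorial conditions in the statement. Recall that for a maximal chain $\UU = \{U_0 \subset U_1 \subset \dots \subset U_r = F_j\}$, an element $x$ of $\UU$ is in $\R(\UU)$ precisely when $\UU \setminus \{x\}$ lies in the subcomplex $K(\Delta)_{j'-1}$ generated by the chains coming before $\UU$ in the $\preceq_l$-order, where $j'$ is the index of $\UU$ in the enumeration. Since $\UU$ is a maximal chain of length $r+1$, dropping $x = U_k$ for some $0 \le k \le r$ leaves a chain of length $r$; such a chain lies in $K(\Delta)_{j'-1}$ iff it extends to some maximal chain $\WW$ with $\WW \prec_l \UU$. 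So $\R(\UU) = \UU$ iff for \emph{every} index $k$ with $0 \le k \le r$, the shortened chain $\UU \setminus \{U_k\}$ is contained in an earlier maximal chain.

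First I would dispose of the two ``boundary'' deletions $k = 0$ and $k = r$, which is where the first displayed condition ($U_{r-1} \subseteq F_i$ for some $i < j$) comes from. Removing $U_0 = \0$ from $\UU$ is harmless: the resulting chain is a face of $\UU$ itself, hence of $\UU$, so it does not constrain anything — in fact every maximal chain $\supseteq \UU\setminus\{\0\}$ of the right length is just $\UU$ again (recall $\Deltao$ has no element below the $1$-dimensional spaces), so this deletion never puts us strictly earlier; one must be slightly careful here about conventions, but the point is that $U_0$ is forced and contributes no condition, or rather contributes exactly the requirement that $\UU$ not be the first facet — I expect the statement implicitly handles this via the $i<j$ clause. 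Removing the top element $U_r = F_j$ leaves the chain $\{U_0 \subset \dots \subset U_{r-1}\}$, which extends to a maximal chain $\WW$ with top facet $F_i$ for any $F_i \supseteq U_{r-1}$; such a $\WW$ satisfies $\WW \prec_l \UU$ iff we can choose $i < j$ (since $F_i \ne F_j$ forces the largest index of disagreement to be $r$, and then $\WW \prec_l \UU \Leftrightarrow F_i \prec_q F_j$, which by the shelling property of $\Delta$ we can arrange with $i<j$; more directly, $F_1,\dots,F_{j-1}$ include every facet $\prec_q$-below $F_j$, so $U_{r-1} \subseteq F_i$ for some $i<j$ is exactly the condition that deleting $U_r$ lands us earlier). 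This gives the ``$\exists i<j$ with $U_{r-1}\subseteq F_i$'' half.

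Next I would treat the ``interior'' deletions, $1 \le k \le r-1$, which produce the second condition. Removing $U_k$ from $\UU$ leaves $\{U_0 \subset \dots \subset U_{k-1} \subset U_{k+1} \subset \dots \subset U_r\}$; its maximal-chain refinements are exactly the $\UU^{A,k}$ for $A$ of dimension $k$ with $U_{k-1} \subset A \subset U_{k+1}$. By Lemma~\ref{lem:2}, $\UU^{A,k} \prec_l \UU$ iff $A \prec_q U_k$. Hence $\UU \setminus \{U_k\}$ lies in $K(\Delta)_{j'-1}$ iff there exists such an $A$ with $A \prec_q U_k$, i.e.\ iff $U_k \neq \min_{\prec_q}\{A : U_{k-1}\subset A \subset U_{k+1}\}$. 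Running this over all $k$ from $1$ to $r-1$ gives precisely the second condition, and combining with the boundary analysis yields the stated equivalence.

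The main obstacle, and the step that needs the most care, is the bookkeeping around which maximal chains of $K(\Delta)$ actually appear \emph{before} $\UU$: one must check that for an interior deletion, the \emph{only} refinements of $\UU\setminus\{U_k\}$ are the chains $\UU^{A,k}$ (so that no ``unexpected'' earlier chain can cover the deletion), and, symmetrically, that for the top deletion every facet $F_i \succeq_q U_{r-1}$ really does give a chain $\prec_l \UU$ exactly when $i<j$ — this last point relies on Theorem~\ref{th:shellable}/the lexicographic shelling hypothesis guaranteeing that the facets of $\Delta$ containing a fixed codimension-one subspace that precede $F_j$ in $\preceq_q$ are exactly those with index $<j$. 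Both are routine given Lemma~\ref{lem:2} and Lemma~\ref{lem:3}, but they are the substance of the argument; the rest is definition-chasing.
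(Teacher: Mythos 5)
Your proposal is correct and follows essentially the same route as the paper's proof: unwind $\R(\UU)=\UU$ into the condition that each one-element deletion of the chain lies in an earlier maximal chain, handle the deletion of the top element via the $\preceq_q$-order on facets (giving $U_{r-1}\subseteq F_i$ with $i<j$), and handle the interior deletions via Lemma~\ref{lem:2} applied to the refinements $\UU^{A,k}$ (giving the minimality condition). Your hedging about deleting $U_0$ matches the paper's implicit convention that the zero subspace is not a genuine vertex of the order complex, so only the deletions $1\le k\le r$ are relevant, exactly as in the paper's own argument.
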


 \begin{proof}
 For a maximal chain $\UU = \{U_0 \subset U_1 \cdots \subset U_r=F_j\}$ in $K(\Delta)$, $\R(\UU) = \UU$ implies that every maximal subchain is contained in a maximal chain of $K(\Delta)$ preceding $\UU$ (w.r.t. the reverse lexicographic order $\preceq_l$). Now we take a maximal subchain $\tilde{\UU}$ of $\UU$ of length $r-1$ which is obtained by removing $U_k$ from $\UU$ for some $ 1\leq k \leq r$. If $k=r$, then $\tilde{\UU}$ is contained in a maximal chain preceding $\UU$ if and only if $U_{r-1} \subseteq F_i$ for some $1 \leq i < j \leq t$. On the other hand, if $k \neq r$, then $U_k \neq \min_{\prec_q} \{ A : U_{k-1} \subset A \subset U_{k+1}\}$, as otherwise, there will be no maximal chain preceding $\UU$ that contains $\tilde{\UU}$. 
 
 Conversely, it is clear that if there exists $1 \leq i <j\leq t$ such that $U_{r-1} \subseteq F_i$ and $U_k \neq \min_{\prec_q} \{ A : U_{k-1} \subset A \subset U_{k+1}\}$ for $1 \leq k < r$, then any maximal subchain of $\UU$ is contained in a previous maximal chain of $K(\Delta)$.
 \end{proof}
 
 \begin{pro}\label{pro:4.4}
 Let $\Delta$ be a lexicographically shellable $q$-complex on $\Fq^n$ of dimension $r$ and $K(\Delta)$ be the corresponding order complex. For any chain $\UU = \{U_0 \subset U_1 \cdots \subset U_t\}$ in $K(\Delta)$, if $U_{t-1}$ contains the minimum nonzero vector of $U_t$, then $$U_s = \min \{A \colon U_{s-1} \subset A \subset U_{s+1}\}$$ for some $1 \leq s < t$.
 \end{pro}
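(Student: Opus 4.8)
The plan is to pinpoint the step of the chain at which the minimum nonzero vector of $U_t$ first enters, and to prove that the chain is $\preceq_q$-minimal exactly there. Write $u = \min(U_t\setminus\{\0\})$ for the minimum nonzero vector of $U_t$ under the lexicographic order $\preceq$ on $\Fqn$. Since $U_{t-1}$ contains $u$ by hypothesis, and $u\notin U_0$ (the chains under consideration begin at $\{\0\}$), the index $m:=\min\{i\colon u\in U_i\}$ satisfies $1\le m\le t-1$; this $m$ is the value of $s$ we will produce. Throughout one uses that the chain is saturated, i.e. $\dim U_i=\dim U_{i-1}+1$, so that $\{A\colon U_{m-1}\subset A\subset U_{m+1}\}$ is exactly the set of subspaces of dimension $\dim U_m$ lying strictly between $U_{m-1}$ and $U_{m+1}$, on which $\preceq_q$ is a total order (Proposition~\ref{prop:equivalent_orderings}).

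The heart of the argument is to check that $U_m=\min_{\preceq_q}\{A\colon U_{m-1}\subset A\subset U_{m+1}\}$. I would take an arbitrary $A$ in that set with $A\neq U_m$ and distinguish two cases. If $u\in A$: then, since $u\notin U_{m-1}$ and $\dim A=\dim U_{m-1}+1$, we get $A=U_{m-1}\oplus\langle u\rangle$; but the same applies to $U_m$ because $u\in U_m\setminus U_{m-1}$, so $U_m=U_{m-1}\oplus\langle u\rangle=A$, contradicting $A\neq U_m$ -- hence this case cannot occur. If $u\notin A$: then $u\in U_m\setminus A$, so $\min(U_m\setminus A)\preceq u$; on the other hand every vector of $A\setminus U_m$ lies in $U_{m+1}\setminus\{\0\}\subseteq U_t\setminus\{\0\}$ and is therefore $\succeq u$, and it is $\neq u$ since $u\in U_m$, so $\min(A\setminus U_m)\succ u$. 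Combining the two inequalities gives $\min(U_m\setminus A)\prec\min(A\setminus U_m)$, that is $U_m\prec_q A$ by Definition~\ref{def:ordering}. Since $A$ was arbitrary in the comparison set, $U_m$ is its $\preceq_q$-minimum, and we take $s=m$.

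I do not anticipate a real obstacle here: the argument is short and runs along the same lines as the proofs of Lemma~\ref{lem:3} and Theorem~\ref{thm:2.10}, where minimality of a subspace is detected by locating a distinguished small vector. The two places deserving care are (i) the index bookkeeping that forces $1\le m\le t-1$ -- this is precisely where the hypothesis that $U_{t-1}$ contains the minimum nonzero vector of $U_t$ is used, since otherwise one could have $m=t$, for which no admissible $s$ exists; and (ii) tracking strict versus non-strict inequalities in the definition of $\prec_q$, so that in the case $u\notin A$ one genuinely obtains $U_m\prec_q A$ and hence that $U_m$ itself, rather than merely some subspace tied with it, is the minimum.
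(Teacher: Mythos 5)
Your proposal is correct and takes essentially the same route as the paper: both arguments locate the first index $m$ at which the minimum nonzero vector of $U_t$ enters the chain and show that $U_m$ is the $\preceq_q$-minimum of the set of subspaces strictly between $U_{m-1}$ and $U_{m+1}$. The only cosmetic difference is that you verify this minimality directly (by comparing $\min(U_m\setminus A)$ and $\min(A\setminus U_m)$), whereas the paper deduces it from the claim already established in the proof of Lemma~\ref{lem:3}.
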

 
 \begin{proof}
 Let $v$ be the minimum nonzero vector in $U_{t}$ and $U_{t-1}$ contains $v$. Thus $v$ is also the minimum nonzero vector of $U_{t-1}.$ Suppose $s$ is the smallest integer with $1 \leq s \leq  t-1$ such that $v \in U_s$. Then $U_s = U_{s-1} \oplus \langle v \rangle $ with $v = \min U_{s+1} \backslash U_{s-1}$. Then Lemma \ref{lem:3} implies that $U_s = \min\{A \colon U_{s-1} < F < U_{s+1}\}$. Indeed, since we proved in Lemma \ref{lem:3} that if for an unrefinable chain $U_{s-1} \subset U_s \subset U_{s+1}$, $U_s = \min_{\prec_q} \{A \colon U_{s-1} < A < U_{s+1}\}$ implies $U_s = U_{s-1} \oplus \<a \>$ where $a = \min U_{s+1} \backslash U_{s-1}$.
 \end{proof}

 \begin{cor}\label{cor:4.5}
 Let $\UU = \{U_0 \subset U_1 \cdots \subset U_r\}$ be a maximal chain in the order complex $K(\Delta)$ of a $q$-complex $\Delta$ of dimension $r$. Then $U_k \neq \min_{\prec_q} \{ A \colon U_{k-1} \subset A \subset U_{k+1}\}$ for all $1 \leq k < r$ if and only if $U_{k}$ does not contain the minimum nonzero vector of $U_{k+1}$ for all $1 \leq k < r$.
 \end{cor}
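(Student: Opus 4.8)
\textbf{Proof plan for Corollary \ref{cor:4.5}.}

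The plan is to prove the two directions separately, but both will be near-immediate consequences of the material already established, namely Lemma \ref{lem:3} and Proposition \ref{pro:4.4} (together with its proof). The point is that Lemma \ref{lem:3} establishes, for an unrefinable three-term chain $U_{k-1}\subset U_k \subset U_{k+1}$, the equivalence between $U_k$ being the $\prec_q$-minimum of the subspaces strictly between $U_{k-1}$ and $U_{k+1}$, and $U_k$ having the form $U_{k-1}\oplus\langle u_k\rangle$ with $u_k=\min U_{k+1}\setminus U_{k-1}$. So the whole corollary is really a local statement about consecutive triples, applied for each $k$ with $1\le k<r$.

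For the reverse direction ($\Leftarrow$), I would argue by contraposition: suppose that for some $k$ with $1\le k<r$ we do have $U_k=\min_{\prec_q}\{A:U_{k-1}\subset A\subset U_{k+1}\}$. By Lemma \ref{lem:3} (applied to the chain $U_{k-1}\subset U_k\subset U_{k+1}$, i.e.\ with $s_1=k-1$, $s=k+1$) this forces $U_k=U_{k-1}\oplus\langle u_k\rangle$ where $u_k=\min U_{k+1}\setminus U_{k-1}$. Now I claim $u_k$ is in fact the minimum nonzero vector of $U_{k+1}$: any nonzero $w\in U_{k+1}$ either lies in $U_{k-1}$, in which case $u_k\prec w$ is automatic provided $u_k\notin U_{k-1}$ and—wait, more carefully: the minimum nonzero vector $v$ of $U_{k+1}$ lies either in $U_{k-1}$ or in $U_{k+1}\setminus U_{k-1}$. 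If $v\in U_{k+1}\setminus U_{k-1}$ then $v=u_k$ by minimality, so $u_k=v\in U_k$ and we are done. If $v\in U_{k-1}\subseteq U_k$, then again $v\in U_k$ and $U_k$ contains the minimum nonzero vector of $U_{k+1}$. Either way $U_k$ contains the minimum nonzero vector of $U_{k+1}$, contradicting the right-hand side. This gives $(\Leftarrow)$.

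For the forward direction ($\Rightarrow$), again by contraposition: suppose for some $k$ with $1\le k<r$ that $U_k$ contains the minimum nonzero vector $v$ of $U_{k+1}$. Then $v$ is also the minimum nonzero vector of $U_k$, and of every term $U_j$ with $j\ge$ (the first index where $v$ appears). I can then apply Proposition \ref{pro:4.4} to the chain $\{U_0\subset\cdots\subset U_{k+1}\}$: since $U_k$ contains the minimum nonzero vector of $U_{k+1}$, that proposition yields some $s$ with $1\le s<k+1$, i.e.\ $1\le s\le k<r$, such that $U_s=\min\{A:U_{s-1}\subset A\subset U_{s+1}\}$, which negates the left-hand side. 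Hence $(\Rightarrow)$, and the corollary follows.

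I do not anticipate a genuine obstacle here; the only thing to be careful about is bookkeeping with index ranges (making sure the $s$ produced by Proposition \ref{pro:4.4} really satisfies $1\le s<r$, which it does since $s<k+1\le r$) and the edge behaviour when $v\in U_0$—but $U_0$ is the zero subspace (dimension $0$), so it contains no nonzero vector and this case is vacuous, meaning the ``first index where $v$ appears'' is at least $1$, exactly as Proposition \ref{pro:4.4} requires. One could alternatively give a direct (non-contrapositive) proof of each direction by the same local analysis, but the contrapositive phrasing lets me cite Lemma \ref{lem:3} and Proposition \ref{pro:4.4} verbatim with no extra work.
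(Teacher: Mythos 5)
Your proposal is correct and follows essentially the same route as the paper: the forward direction is the same application of Proposition \ref{pro:4.4} to the truncated chain $\{U_0\subset\cdots\subset U_{k+1}\}$, and the reverse direction rests on the same key fact from Lemma \ref{lem:3}, namely that $\min_{\prec_q}\{A\colon U_{k-1}\subset A\subset U_{k+1}\}=U_{k-1}\oplus\langle\min U_{k+1}\setminus U_{k-1}\rangle$, which you merely phrase contrapositively (your short case analysis on where the minimum nonzero vector of $U_{k+1}$ sits is equivalent to the paper's observation that this vector equals $\min U_{k+1}\setminus U_{k-1}$ when it avoids $U_k$). No gaps.
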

 \begin{proof}
  Suppose $U_k \neq \min_{\prec_q} \{ A \colon U_{k-1} \subset A \subset U_{k+1}\}$ for all $1 \leq k < r$ and assume that $U_i$ contains the minimum nonzero vector of $U_{i+1}$ for some $1\le i < r$. By Proposition \ref{pro:4.4}, there exists $1 \le s < i+1 \le r $ such that 
  \[
  U_s = \min \{A \colon U_{s-1} \subset A \subset U_{s+1}\},
  \]
  which contradicts the assumption.

  For the converse, assume that $a_{k+1}$ is the minimum nonzero vector of $U_{k+1}$ for all $1 \leq k < r$ and $a_{k+1} \notin U_{k}$ for all $1 \leq k < r$. Note that $a_{k+1} = \min U_{k+1} \backslash U_{k-1}$. Then $U_k \neq \min_{\preceq_q}\{A \colon U_{k-1} \subset A \subset U_{k+1}\}$ for all $1 \leq k \leq r-1$, since $\min_{\preceq_q}\{A \colon U_{k-1} \subset A \subset U_{k+1} = U_{k-1} \oplus \<  \min U_{k+1} \backslash U_{k-1} \>$ as proved in Lemma \ref{lem:3}.
 \end{proof}
 Next we count the number of maximal chains $\UU$ with the property $\R(\UU) = \UU$ in $K(\Delta_q(k,n))$ where $\Delta$ is the uniform $q$-complex of  dimension $k$ on $\Fq^n$. This recovers a result from \cite[Theorem 5.11]{GPR21} on singular homology of $\Delta_q(k,n)$.  
 \begin{pro}\label{pro:4.9}
 Let $\Delta := \Delta_q(k,n)$ be the uniform $q$-complex of dimension $k$ on $\Fq^n$ and $K(\Delta)$ its corresponding order complex. 
 Then 
 \[
 | \{\UU \in K(\Delta) \colon \UU \text{ is maximal and }\R(\UU)=\UU \} |=
 q^{k(k+1)/2} {n-1\brack k}_q.
 \]
 %\glen{LaTeX is throwing a warning because of the use of "brack". I notice is this is the first use of the notation, and I myself am unsure if it is just the usual binomial coefficient or something else. Perhaps this should be defined? The subscript $q$ suggests it isn't just the usual binomial coefficient. }
 \end{pro}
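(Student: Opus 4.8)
The plan is to count maximal chains $\UU = \{U_0 \subset U_1 \subset \cdots \subset U_k\}$ in $K(\Delta)$ with $\R(\UU) = \UU$ by using the characterization from Proposition \ref{PRO:4.4} together with Corollary \ref{cor:4.5}. Since $\Delta = \Delta_q(k,n)$ is uniform, \emph{every} $(k-1)$-dimensional subspace is contained in some $k$-dimensional facet other than $U_k$ (as long as $n \geq k+1$, which is implicit), so the condition ``there exists $i < j$ with $U_{k-1} \subseteq F_i$'' is automatic. Hence $\R(\UU) = \UU$ reduces, by Corollary \ref{cor:4.5}, to the purely combinatorial condition: for each $1 \leq \ell < k$, the subspace $U_\ell$ does \emph{not} contain the minimum nonzero vector of $U_{\ell+1}$; equivalently $U_\ell \neq \min_{\prec_q}\{A : U_{\ell-1} \subset A \subset U_{\ell+1}\}$.

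First I would count chains satisfying this condition by building them from the top down, choosing $U_k$ first (a $k$-dimensional subspace of $\Fqn$), then $U_{k-1} \subset U_k$, then $U_{k-2} \subset U_{k-1}$, and so on. At the step where $U_{\ell+1}$ has been chosen, I must count the subspaces $U_\ell \subset U_{\ell+1}$ of codimension $1$ that fail to contain $v_{\ell+1} := \min U_{\ell+1} \setminus \{0\}$. The hyperplanes of the $(\ell+1)$-dimensional space $U_{\ell+1}$ number ${\ell+1 \brack \ell}_q = {\ell+1 \brack 1}_q = (q^{\ell+1}-1)/(q-1)$, and exactly ${\ell \brack 1}_q = (q^{\ell}-1)/(q-1)$ of them contain the fixed nonzero vector $v_{\ell+1}$. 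Therefore the number of valid choices of $U_\ell$ is
\[
\frac{q^{\ell+1}-1}{q-1} - \frac{q^{\ell}-1}{q-1} = \frac{q^{\ell+1} - q^{\ell}}{q-1} = q^{\ell}.
\]
Crucially, this count does not depend on which vector $v_{\ell+1}$ is (only on $\dim U_{\ell+1} = \ell+1$), so the recursion decouples cleanly: the number of valid completions of a chain below a fixed $U_{\ell+1}$ is $\prod_{\ell=1}^{\ell} q^{\ell'}$ regardless of the choices made above.

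Putting this together, I would fix the top facet $U_k$: there are ${n \brack k}_q$ of these, but I need the refinement count. Actually, it is cleaner to observe that for $\ell = k$ there is no constraint relating $U_k$ to anything above it, but the constraint at level $\ell = k-1$ depends on $v_k = \min U_k\setminus\{0\}$, and so on down to $\ell = 1$. The total count is therefore
\[
\left|\{U_k \in \Sigma(E) : \dim U_k = k\}\right| \cdot \prod_{\ell=1}^{k-1} q^{\ell}
\;=\; {n \brack k}_q \cdot q^{(k-1)k/2}.
\]
This does not match the claimed $q^{k(k+1)/2}{n-1 \brack k}_q$ directly, so the main subtlety — and the step I expect to be the real obstacle — is handling the bottom of the chain correctly: the constraint must also be imposed on $U_1$ relative to $U_2$, but additionally one must be careful about $U_0 = \{0\}$ and whether the chain condition at $\ell = 1$ is ``$U_1$ does not contain $\min U_2\setminus\{0\}$'' or something involving the whole of $U_k$. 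I would reconcile this by using the identity ${n \brack k}_q = {n-1 \brack k}_q + q^{n-k}{n-1 \brack k-1}_q$ and the standard bijection counting flags, or more efficiently by re-deriving the count as: choose a full flag-type datum where the generator of $U_{\ell}/U_{\ell-1}$ is forced to have minimum coordinate pattern, which contributes an extra factor of $q^{n-1}$ worth of freedom at the top level split into $q^{k}$-type pieces; carefully tracking these factors should convert $q^{(k-1)k/2}{n \brack k}_q$ into the asserted $q^{k(k+1)/2}{n-1 \brack k}_q$ via $q^{k}{n-1 \brack k}_q \cdot (\text{telescoping}) $, and verifying this algebraic identity is exactly where care is needed. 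I would double-check the final formula against the known case in \cite[Theorem 5.11]{GPR21}.
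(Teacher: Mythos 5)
There is a genuine gap, and it is exactly at the point you flagged as a ``subtlety to be reconciled later.'' Your opening claim --- that in the uniform complex the condition ``$U_{k-1}\subseteq F_i$ for some $F_i$ preceding $U_k=F_j$'' is automatic --- is false. It is true that every hyperplane $U_{k-1}$ of $F_j$ lies in some \emph{other} facet, but Proposition \ref{PRO:4.4} requires that facet to come \emph{earlier} in the shelling order $\prec_q$, and this fails precisely when $F_j$ contains the globally minimal nonzero vector $x$ of $\Fq^n$ while $x\notin U_{k-1}$: in that case every facet $F_i\supseteq U_{k-1}$ other than $F_j$ satisfies $F_i\cap F_j=U_{k-1}$, so $x\in F_j\setminus F_i$ and hence $F_j\prec_q F_i$, i.e.\ no admissible $F_i$ precedes $F_j$. (And if instead $x\in U_{k-1}$, condition (ii) fails at the level where $x$ first enters the chain, by Lemma \ref{lem:3}.) This is the content of the paper's Lemma \ref{lem:4.6}: facets containing $x$ contribute \emph{no} chains with $\R(\UU)=\UU$, whereas for facets not containing $x$ the precedence condition really is automatic, because $F_i:=U_{k-1}\oplus\langle x\rangle$ is a facet of the uniform complex with $x\in F_i\setminus F_j$, hence $F_i\prec_q F_j$.

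Once this is corrected, your per-level count $q^{\ell}$ of hyperplanes of $U_{\ell+1}$ avoiding $\min U_{\ell+1}\setminus\{0\}$ is exactly the paper's computation, giving $q^{k(k-1)/2}$ chains below each \emph{admissible} facet; but the number of admissible facets is not ${n\brack k}_q$, it is ${n\brack k}_q-{n-1\brack k-1}_q=q^{k}{n-1\brack k}_q$, which immediately yields $q^{k(k-1)/2}\cdot q^{k}{n-1\brack k}_q=q^{k(k+1)/2}{n-1\brack k}_q$. The discrepancy you observed is therefore not an algebraic bookkeeping issue about $U_0$ or $U_1$, and no telescoping identity can convert $q^{k(k-1)/2}{n\brack k}_q$ into the correct answer: your count genuinely overcounts by the ${n-1\brack k-1}_q\cdot q^{k(k-1)/2}$ chains ending at facets through $x$, all of which must be discarded. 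The proposed ``reconciliation'' step is where the proof breaks down, and the missing idea is the role of the minimal vector $x$ in the shelling order.
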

 
 The idea is to use the characterization of the maximal chains $\UU$ satisfying the property $\R(\UU) = \UU$ as proved in Proposition \ref{PRO:4.4}. We prove the following Lemma that will be useful in proving the Proposition.

 \begin{lem}\label{lem:4.6}
Let $\Delta$ be a lexicographically shellable $q$-complex over $\Fqn$ of dimension $k$. Let $\UU$ be a maximal chain in $K(\Delta)$ ending with a facet $F_j$ of $\Delta$. If $x$ is the minimum (w.r.t. $\prec$) nonzero vector in $\Fq^n$ such that $\langle x \rangle \in \Delta$ and $F_j$ contains $x$, then $\R(\UU) \neq \UU$.
 \end{lem}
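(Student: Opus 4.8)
The plan is to show that if $F_j$ contains the globally minimum vector $x$ with $\langle x\rangle \in \Delta$, then the criterion of Proposition \ref{PRO:4.4} fails for every maximal chain $\UU$ ending in $F_j$, so $\R(\UU)\neq\UU$. By Proposition \ref{PRO:4.4}, $\R(\UU)=\UU$ would require both that $U_{k-1}\subseteq F_i$ for some facet $F_i\prec_q$-earlier than $F_j$, and that $U_s\neq\min_{\prec_q}\{A:U_{s-1}\subset A\subset U_{s+1}\}$ for all $1\le s<k$. The strategy is to derive a contradiction from the second condition alone, using Corollary \ref{cor:4.5}: that condition is equivalent to saying $U_s$ does not contain the minimum nonzero vector of $U_{s+1}$ for all $1\le s<k$.

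The key observation is the following. Since $x\in F_j=U_k$ and $x$ is the $\prec$-minimum among all nonzero vectors spanning a point of $\Delta$, and since every vector of $U_k$ spans a point of $\Delta$ (as $\Delta$ is a $q$-complex and $U_k\in\Delta$), $x$ is in fact $\min U_k\setminus\{0\}$, i.e. $x$ is the minimum nonzero vector of $U_k$. Now apply Corollary \ref{cor:4.5} with $k-1$ in the role of the index: the assumption $U_{k-1}$ does not contain the minimum nonzero vector of $U_k$ means $x\notin U_{k-1}$. But then consider the chain $U_{k-1}\subset U_k$ sitting inside $\Deltao$; the point $\langle x\rangle$ is comparable with nothing forcing a contradiction yet — so I instead argue directly as in Proposition \ref{pro:4.4}. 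Indeed, let $s$ be the smallest index with $1\le s\le k$ such that $x\in U_s$ (this exists since $x\in U_k$); then $s\ge 2$ would give $x\notin U_{s-1}$ while $x=\min U_{s+1}\setminus U_{s-1}$ — wait, one must be careful when $s=k$ there is no $U_{s+1}$ inside the chain of length $k$, but $U_{k-1}\subset U_k$ still forces, via Lemma \ref{lem:3} applied to any unrefinable segment, the needed equality. The cleanest route: since $x=\min U_k\setminus\{0\}$ and $U_{k-1}$ has codimension $1$ in $U_k$, if $x\notin U_{k-1}$ then $x=\min U_k\setminus U_{k-1}$, so $U_{k-1}\oplus\langle x\rangle = U_k = \min_{\prec_q}\{A: U_{k-1}\subset A\subset U_k\}$ trivially (the set has one element), which is vacuous; hence I should instead use the ambient facet $F_j$ and the index $k-1$ with $U_{k+1}:=F_j$? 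That fails since $U_k=F_j$. So the correct move is: $x\in U_k$ and $x$ minimal forces, by the smallest-index argument of Proposition \ref{pro:4.4} (taking $t=k$), that $U_s=\min\{A:U_{s-1}\subset A\subset U_{s+1}\}$ for that smallest $s<k$ with $x\in U_s$, provided such $s<k$ exists. If instead the smallest such $s$ equals $k$, then $x\notin U_{k-1}$, and we are in the situation that $U_{k-1}$ does not contain $\min U_k\setminus\{0\}$; but then I claim $x$ being $\prec$-minimum among all points of $\Delta$ makes $U_{k-1}$ itself contain some even smaller obstruction — more precisely, repeat the argument one level down is not available.

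The main obstacle — and the point the write-up must handle carefully — is exactly this boundary case where $x\notin U_{k-1}$. The resolution I expect is: since $\Delta$ is a $q$-complex, $U_{k-1}\in\Delta$, and $U_{k-1}$ is nonzero (as $k\ge 2$; the case $k\le 1$ is trivial since then $K(\Delta)$ is discrete and the restriction statement is vacuous or immediate), so $U_{k-1}$ contains a nonzero vector $y$, and $\langle y\rangle\in\Delta$; by minimality $x\preceq y$, but that does not put $x$ in $U_{k-1}$. Thus the genuinely correct argument must invoke the \emph{first} condition of Proposition \ref{PRO:4.4} in tandem: if $x\notin U_{k-1}$ then I will show no earlier facet $F_i$ can contain $U_{k-1}$ together with the required sphere-minimality, OR — more likely the intended argument — one shows that the hypothesis "$F_j$ contains the global minimum $x$" already precludes $F_j$ from being anything but $F_1$ is false too. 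The safest plan: prove that $x\in U_k$ forces, via Proposition \ref{pro:4.4} applied to the full chain $\UU$ of length $k$ (noting $U_{k-1}$ contains $x$ or not), the existence of $1\le s<k$ with $U_s=\min\{A:U_{s-1}\subset A\subset U_{s+1}\}$ whenever $x\in U_{k-1}$; and when $x\notin U_{k-1}$, append $F_j$'s sphere — no. I will instead present it as: by Corollary \ref{cor:4.5}, $\R(\UU)=\UU$ forces every $U_s$ ($s<k$) to omit the minimum nonzero vector of $U_{s+1}$; in particular $U_{k-1}$ omits $x=\min U_k\setminus\{0\}$; descending, $U_{k-2}$ omits $\min U_{k-1}\setminus\{0\}$, which $\preceq x$ — and iterating, $U_1$ omits $\min U_2\setminus\{0\}\preceq x$; but $U_1$ is a point $\langle z\rangle$ with $\langle z\rangle\in\Delta$, so $x\preceq z$, while $z=\min U_2\setminus\{0\}$ would need... — this chain of inequalities squeezes to $x=z=\min U_2\setminus\{0\}\in U_1$, contradicting that $U_1$ omits it. That squeezing argument, carried out with the monotonicity $\min U_{s}\setminus\{0\}\preceq \min U_{s+1}\setminus\{0\}$ reversed appropriately and combined with $x=\min U_k\setminus\{0\}$ being globally minimal over $\Delta$, is the crux, and writing it cleanly is the one step requiring care.
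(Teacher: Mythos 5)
Your final plan has a genuine gap, and it sits exactly at the boundary case you flagged and then set aside. The ``squeezing'' argument cannot work: the non-minimality conditions of Proposition \ref{PRO:4.4} (equivalently, via Corollary \ref{cor:4.5}, that each $U_s$ omits the minimum nonzero vector of $U_{s+1}$ for $1\le s<k$) can genuinely hold for a maximal chain ending at a facet containing $x$. For instance, in the uniform complex take $U_{k-1}$ a hyperplane of $F_j$ with $x\notin U_{k-1}$ and keep choosing each $U_s$ to avoid $\min\,U_{s+1}\setminus\{0\}$; all those conditions hold simultaneously and no contradiction arises. Your inequalities only give $x\preceq \min\,U_{s+1}\setminus\{0\}\preceq \min\,U_s\setminus\{0\}$ (the step ``$\min U_{k-1}\setminus\{0\}$, which $\preceq x$'' has the direction reversed), and nothing forces $\min\,U_2\setminus\{0\}$ to lie in $U_1$ --- that is precisely what is being assumed \emph{not} to happen, consistently. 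So no contradiction is available from the non-minimality conditions alone; for such chains the lemma is true only because the \emph{other} requirement of Proposition \ref{PRO:4.4} fails, namely the existence of a facet $F_i\prec_q F_j$ containing $U_{k-1}$, and your final write-up never proves this.

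The missing step is the one the paper carries out. Let $t$ be the smallest index with $x\in U_t$ (it exists since $x\in U_k=F_j$). If $t<k$, your earlier sketch is correct: $x\notin U_{t-1}$ and $x\in U_{t+1}$, and since every nonzero vector of $U_{t+1}$ spans a line of $\Delta$, global minimality of $x$ gives $x=\min\,U_{t+1}\setminus U_{t-1}$; hence $U_t=U_{t-1}\oplus\langle x\rangle=\min_{\prec_q}\{A\colon U_{t-1}\subset A\subset U_{t+1}\}$ by the claim established in Lemma \ref{lem:3}, violating the non-minimality condition at index $t$. If $t=k$, i.e.\ $x\in F_j\setminus U_{k-1}$, argue instead about facets: any facet $F_i\neq F_j$ with $U_{k-1}\subseteq F_i$ satisfies $F_i\cap F_j=U_{k-1}$ by a dimension count, so $x\in F_j\setminus F_i$; since every nonzero vector of $F_i$ spans a line of $\Delta$, minimality of $x$ yields $\min\,F_j\setminus F_i=x\prec\min\,F_i\setminus F_j$, i.e.\ $F_j\prec_q F_i$. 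Hence no facet preceding $F_j$ contains $U_{k-1}$, the facet condition of Proposition \ref{PRO:4.4} fails, and $\R(\UU)\neq\UU$ in this case as well. This is exactly the alternative you mentioned (``no earlier facet $F_i$ can contain $U_{k-1}$'') and then abandoned; it is indispensable.
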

 \begin{proof}
     Let $\UU = \{U_0 \subset U_1 \cdots \subset U_k = F_j\}$ and $t$ is the smallest integer $1\leq t \le k$ such that $x=\min_{\preceq U_t\setminus U_{t-1}}$. 
     
     If $t =k$, then $x = \min_{\prec} F_j \setminus U_{k-1}$. Thus $U_{k-1}$ cannot be contained in a facet $F_i$ where $F_i \prec_q F_j$. Indeed,  $U_{k-1} \subseteq F_i$ will imply $F_i \cap F_j = U_{k-1}$ and therefore $x \in F_j \setminus F_i$. Since $x$ is the minimum nonzero element such that $\< x\> \in \Delta$, it is clear that $F_j \prec_q F_i$. 
     
     On the other hand, if $t < k$, then by Lemma \ref{lem:3}, $U_t=\min_{\prec_q}\{A\colon U_{t-1}\subset A \subset U_{t+1}\}$ since $U_t = U_{t-1} \oplus \< x \>$. Therefore, in both the cases, the maximal chain $\UU$ ending with $F_j$ does not satisfy the hypothesis of Proposition \ref{PRO:4.4} and we can conclude that $\R(\UU) \neq \UU $.
 \end{proof}
 \begin{proof}[Proof of the Proposition \ref{pro:4.9}]
 Let $x$ be the minimum nonzero vector in $\Fq^n$ such that $\langle x \rangle \in \Delta$. Following Lemma \ref{lem:4.6}, we know that if a facet $F$ of $\Delta$ contains $x$, then no maximal chain $\UU$ in $K(\Delta)$ ending with $F$ can have the property $\R(\UU) = \UU$. On the other hand, if a facet $F_j$ of $\Delta$ does not contain $x$, then any codimension $1$ subspace of $F_j$ is contained in a facet $F_i$ preceding $F_j$. Indeed, since for any subspace $U_{k-1} \subseteq F_j$ of dimension $k-1$, the subspace $F_i := U_{k-1} \oplus \<x\>$ is a facet of $\Delta=\Delta_q(k,n)$ and $F_i$ must precede $F_j$ since $x \in F_i \setminus F_j$.  Moreover, Corollary \ref{cor:4.5} imply that a maximal chain $\UU = \{U_0 \subset U_1 \cdots \subset U_k = F_j\}$ has the property $\R(\UU) = \UU$ if and only if $\min U_{i+1} \notin U_i $ for all $1 \leq i \leq k-1$. Thus for a fixed facet $F_j$ that does not contain $x$, the number of maximal chains $\UU$ ending with $F_j$ and satisfying the property $\R(\UU) = \UU$ is $({k \brack k-1}_q - {k-1 \brack k-2}_q)\cdots ({2 \brack 1}_q - 1) = q^{k-1}({k-1 \brack k-1}_q) q^{k-2} ({k-2 \brack k-2}_q) \cdots (q+1-1)=q^{k(k-1)/2}$. Now the number of the facets $F_i$ of $\Delta$ not containing the minimum vector $x$ is ${n \brack k}_q - {n-1 \brack k-1}_q = q^{k}{n-1 \brack  k}_q$.
 Thus 
 \begin{align*}
     |\{\UU \in K(\Delta) \colon \UU \text{ is maximal and }\R(\UU)= \UU \} |&= q^{k(k-1)/2} q^k{n-1 \brack k}_q\\
     &= q^{k(k+1)/2} {n-1 \brack k}_q.
 \end{align*}
 \end{proof}
 
 %Indeed, if $\min A_{i+1} \in A_i$, then Proposition \ref{pro:4.4} implies that there exists $1 \leq s < i+1$ such that $A_s = \min \{F: A_{s-1} < F < A_{i+1}\}$. Therefore, Proposition \ref{PRO:4.4} implies that $\R(A) \neq A$ and thus $\R(A) = A$ implies that  $\min A_{i+1} \in A_i$ for all $1 \leq i \leq k-1$. On the other hand, if $\min A_{i+1} \notin A_i$ for $1 \leq i \leq k-1$, then $A_i \notin \min\{A : A_{i-1} < A < A_{i+1}\}$ for $1 \leq i \leq k-1$ and $A_{k-1} < F_i$ for some $i < j$. 
 Next we give a method of computing the simplicial homology of the order complex corresponding to \emph{any} lexicographically shellable $q$-complex.
 \begin{thm}
Let $\Delta$ be a lexicographically shellable $q$-complex of dimension $k$ on $\Fq^n$. Suppose $F_1, \ldots, F_t$ are the facets of $\Delta$ ordered according to the shelling $\prec_q$. Let the first $s$ facets contain the minimum nonzero vector $x$ of $\Delta$. Also, for $s+1 \leq j \leq t$, if $x_j$ is the minimum nonzero vector in $F_j$, then we set $r_j = |\{F_i \cap F_j \colon \dim F_i \cap F_j = k-1, \; F_i \prec_q F_j, \text{ and } x_j \notin F_i \cap F_j  \}|$. Then
$$| \{\UU \in K(\Delta) \colon \UU \text{ is maximal and }\R(\UU)= \UU \} |= q^{(k-1)(k-2)/2}  \sum\limits_{i=s+1}^{t} r_i.$$
 \end{thm}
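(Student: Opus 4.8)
The strategy is to count, for each facet $F_j$, the number of maximal chains $\UU$ ending in $F_j$ with $\R(\UU)=\UU$, and then sum over $j$. By Lemma \ref{lem:4.6}, if a facet $F_j$ contains the global minimum vector $x$ of $\Delta$, then no maximal chain ending in $F_j$ satisfies $\R(\UU)=\UU$; this disposes of the first $s$ facets and explains why the sum starts at $i=s+1$. So fix $j$ with $s+1\le j\le t$, and let $x_j=\min F_j$. By Proposition \ref{PRO:4.4}, a maximal chain $\UU=\{U_0\subset\cdots\subset U_r=F_j\}$ (with $r=k$) has $\R(\UU)=\UU$ if and only if (a) $U_{k-1}\subseteq F_i$ for some $F_i\prec_q F_j$, and (b) $U_m\neq\min_{\prec_q}\{A:U_{m-1}\subset A\subset U_{m+1}\}$ for all $1\le m<k$.

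**Translating the two conditions.** The plan is to handle (a) and (b) separately, showing that together they factor the count as (number of valid top pieces $U_{k-1}$) times (number of valid ways to refine $U_{k-1}$ into a chain below it). For (a): $U_{k-1}\subseteq F_i$ with $F_i\prec_q F_j$ and $\dim F_i\cap F_j=k-1$ forces $F_i\cap F_j=U_{k-1}$, so the admissible $U_{k-1}$'s are exactly the subspaces in the set defining $r_j$, \emph{provided} we also know $x_j\notin U_{k-1}$. That constraint on $U_{k-1}$ is exactly the condition ``$x_j\notin F_i\cap F_j$'' appearing in the definition of $r_j$; conversely, if $x_j\in U_{k-1}$ then by Corollary \ref{cor:4.5} (the contrapositive: $U_{k-1}$ containing $\min U_k$ violates (b) at level $m=k-1$), such a chain cannot satisfy $\R(\UU)=\UU$ anyway. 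So the number of admissible $U_{k-1}$ is precisely $r_j$, and for each such $U_{k-1}$ we have $x_j\notin U_{k-1}$ automatically. Then by Corollary \ref{cor:4.5}, condition (b) is equivalent to: $U_m$ does not contain $\min U_{m+1}$ for all $1\le m<k$. Since $x_j\notin U_{k-1}$ and $U_{k-1}$ has dimension $k-1$, counting refinements $U_0\subset\cdots\subset U_{k-1}$ of $U_{k-1}$ with this no-minimum property is the same combinatorial count as in the proof of Proposition \ref{pro:4.9}, but now inside the $(k-1)$-dimensional space $U_{k-1}$: the number is $\bigl({k-1\brack k-2}_q-{k-2\brack k-3}_q\bigr)\cdots\bigl({2\brack 1}_q-1\bigr)=q^{k-2}q^{k-3}\cdots q^{1}=q^{(k-1)(k-2)/2}$.

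**Assembling.** Summing over $j$ from $s+1$ to $t$ gives
\[
|\{\UU\in K(\Delta):\UU\text{ maximal},\ \R(\UU)=\UU\}|=\sum_{j=s+1}^{t} r_j\cdot q^{(k-1)(k-2)/2}=q^{(k-1)(k-2)/2}\sum_{i=s+1}^{t} r_i,
\]
as claimed.

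**Expected main obstacle.** The delicate point is the interaction between conditions (a) and (b) at the top level, $m=k-1$: one must argue carefully that the constraint ``$x_j\notin F_i\cap F_j$'' built into $r_j$ is not an extra restriction but is forced by $\R(\UU)=\UU$, so that we neither overcount (by allowing $U_{k-1}\ni x_j$) nor undercount (by imposing $x_j\notin U_{k-1}$ as an independent hypothesis). Resolving this requires invoking Corollary \ref{cor:4.5} in the direction ``$U_{k-1}$ contains $\min U_k$ $\Rightarrow$ condition (b) fails,'' together with Proposition \ref{pro:4.4}, and checking that the refinement count below $U_{k-1}$ really is independent of which admissible $U_{k-1}$ we chose --- i.e., that the minimum vector of $U_{k-1}$ relative to the induced lexicographic order behaves uniformly, which follows because the count $q^{(k-1)(k-2)/2}$ depends only on $\dim U_{k-1}=k-1$ and not on the particular subspace.
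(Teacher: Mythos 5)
Your proposal is correct and follows essentially the same route as the paper's proof: discard the first $s$ facets via Lemma \ref{lem:4.6}, characterize $\R(\UU)=\UU$ through Proposition \ref{PRO:4.4} and Corollary \ref{cor:4.5}, count the $r_j$ admissible choices of $U_{k-1}$ (with the observation that $x_j\notin U_{k-1}$ is forced rather than imposed), and multiply by the telescoping product $\bigl({k-1\brack k-2}_q-{k-2\brack k-3}_q\bigr)\cdots\bigl({2\brack 1}_q-1\bigr)=q^{(k-1)(k-2)/2}$ for the lower refinements before summing over $j$. The extra care you take at the top level ($m=k-1$) is consistent with, and only slightly more explicit than, the paper's argument.
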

 \begin{proof}
 The proof is essentially same as the proof of Proposition \ref{pro:4.9}. First we note that the facets of $\Delta$ that contains the minimum nonzero vector of $\Delta$ are always in the beginning of the shelling order $\prec_q$. Therefore, the facets of $\Delta$ containing the minimum nonzero vector $x$ of $\Delta$ are precisely the first $s$ facets that contain $x$. Following Lemma \ref{lem:4.6}, it is clear that for the maximal chains $\UU$ ending with any of the first $s$ facets $\R(\UU) \neq \UU$. Now for any $s+1 \le j \le t$, if a maximal chain $\UU= U_0 \subset U_1 \subset \cdots \subset U_{k-1} \subset F_j$ has the property $\R(\UU)=\UU$, then following Proposition \ref{PRO:4.4} and Corollary \ref{cor:4.5} $\UU$ has to satisfy the following:
 \begin{itemize}
\item[(i)] $U_{k-1} \subseteq F_i$ for some $1 \le i < j$,

\item[(ii)] $\min \,U_{i+1} \notin U_i $ for all $1 \le i < k$.
 \end{itemize}
The condition (i) is equivalent to the existence of a facet $F_i$ such that $F_i \prec_q F_j$ with 
$F_i \cap F_j = U_{k-1}$. Also, condition (ii) implies that $ \min F_j \notin U_{k-1}  $, i.e., $x_j \notin F_i \cap F_j$. Thus there are $r_j$ choices for $U_{k-1}$ for a maximal chain $\UU =\{ U_0 \subset \cdots \subset U_{k-1} \subset F_j\}$ for each $s+1 \le j \le t$. For the remaining $U_i$'s with $1 \le i \le k-2$, we use condition (ii) to count the possible number of subchains $ \tilde{\UU} =\{ U_0 \subset U_1 \subset \cdots \subset U_{k-2}\}$ that can be extended to maximal chains $\UU = \{\tilde{\UU} \subset U_{k-1} \subset F_j\}$ with the desired property $\R(\UU) =\UU$. The number of such subchains $\tilde{\UU}$ is $({k-1 \brack k-2}_q - {k-2 \brack k-3}_q)\cdots ({2 \brack 1}_q - 1) = q^{(k-1)(k-2)/2}$. Thus combining with the choices of $U_{k-1}$ we can conclude that the number of the maximal chains satisfying $\R(\UU) =
\UU$ is $q^{(k-1)(k-2)/2}  \sum\limits_{i=s+1}^{t} r_i.$
 \end{proof}
 \begin{cor}
 Let $\Delta$ be any lexicographically shellable $q$-complex of dimension $k$ on $\Fq^n$. Suppose $F_1, \ldots, F_t$ are the facets of $\Delta$ ordered according to the shelling $\prec_q$. Let the first $s$ facets contain the minimum nonzero vector $x$ of $\Delta$. Also, for $s+1 \leq j \leq t$, if $x_j$ be the minimum nonzero vector in $F_j$, then we set $r_j = |\{F_i \cap F_j \colon \dim F_i \cap F_j = k-1, \; F_i \prec_q F_j, \text{ and } x_j \notin F_i \cap F_j  \}|$. Then
 \[
\widetilde{H_p}(\ring{\Delta}) = \begin{cases}
\ZZ^{q^{(k-1)(k-2)/2}  \sum\limits_{i=s+1}^{t} r_i} & \text{if } p=k-1,\\
0 & \text{ otherwise}.
\end{cases}
\]  
 \end{cor}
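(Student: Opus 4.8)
The plan is to derive this corollary directly from the preceding theorem together with the classical description of the reduced homology of a shellable simplicial complex. Recall that if $K$ is a shellable simplicial complex with shelling $G_1, \ldots, G_m$, then the reduced homology $\widetilde{H}_*(K)$ is free abelian, with rank in each degree equal to the number of facets $G_i$ whose restriction satisfies $\R(G_i) = G_i$ and whose dimension is one more than that degree; more precisely, the ``full'' restriction facets $\R(G_i) = G_i$ each contribute one generator in degree $\dim G_i$ (see \cite[\S2.3]{Hatcher} or \cite[Exercise 7.3.1 ff.]{BJO92}). I would first invoke Theorem \ref{thm:2.12}, which tells us that $K(\Delta)$ is a shellable simplicial complex with the shelling $\preceq_l$, and that the maximal chains (facets of $K(\Deltao)$, equivalently maximal chains of $\Deltao$) all have length $k$, so $K(\Deltao)$ is pure of dimension $k-1$. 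Consequently all homology lives in degree $\leq k-1$, and by the classical count the only possibly nonzero reduced homology group is $\widetilde{H}_{k-1}$, which is free of rank equal to the number of maximal chains $\UU$ with $\R(\UU) = \UU$.

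The second step is purely bookkeeping: I would quote the immediately preceding Theorem verbatim, which computes exactly this count as $q^{(k-1)(k-2)/2} \sum_{i=s+1}^{t} r_i$. Combining the two facts gives that $\widetilde{H}_{k-1}(\Deltao)$ is free abelian of that rank and all other reduced homology groups vanish, which is exactly the statement of the corollary (modulo matching the notation $\widetilde{H_p}(\ring\Delta)$ used here with $\widetilde{H}_p(K(\Deltao))$; note $K(\Deltao)$ and $K(\Delta)$ have homeomorphic geometric realizations since adjoining the cone point $\{0\}$ does not change the homotopy type of the order complex after puncturing, or one works directly with $\Deltao$ throughout). I would also remark that the homotopy-theoretic Corollary following Theorem \ref{thm:main} already guarantees the homology is concentrated in a single degree, so the content of this corollary over that earlier one is precisely the rank formula.

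I do not anticipate a serious obstacle here, since every ingredient is already in place; the corollary is genuinely a one-line consequence. The only point requiring a modicum of care is to make sure the indexing conventions line up: that the degree in which homology is concentrated is $k-1$ and not $k$ (because a maximal chain $U_0 \subset \cdots \subset U_k$ has $k+1$ vertices but the chain $\UU$ omitting $U_0 = \{0\}$ from $\Deltao$ has $k$ vertices, hence is a $(k-1)$-simplex), and that the restriction-facet count in the preceding Theorem is being read off with respect to the reverse lexicographic order $\preceq_l$ used in Proposition \ref{PRO:4.4}. Once these are checked, the proof is simply: by Theorem \ref{thm:2.12} and the standard theory, $\widetilde{H}_p(\Deltao)$ is free with rank the number of maximal chains $\UU$ satisfying $\R(\UU) = \UU$ if $p = k-1$ and $0$ otherwise; substituting the count from the preceding Theorem completes the argument.
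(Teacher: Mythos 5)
Your proposal matches the paper's argument: the paper's proof is a one-line citation of Björner's result on the homology of shellable complexes (Theorem 7.7.2 in \cite{BJO92}), applied to the shellable order complex from Theorem \ref{thm:2.12} with the restriction-facet count supplied by the immediately preceding theorem, exactly as you describe. Your extra care about the degree shift (maximal chains of $\Deltao$ being $(k-1)$-simplices) and the identification of $\widetilde{H_p}(\ring{\Delta})$ with the homology of the order complex is correct and simply makes explicit what the paper leaves implicit.
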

\begin{proof}
This follows from \cite[Theorem 7.7.2]{BJO92}.
\end{proof}

\begin{rem}
Note that the way we calculated the homology of these spaces really just uses the axioms for homology \cite[\S2.3]{Hatcher}. This shows that it doesn't matter which version of the spaces $\Deltao$ you work with. Whether you work with $T(\Deltao)$, $N(\Deltao)$, or $B(\Deltao)$, what is important is just the way the space is glued together from simple pieces. A homology theory in any of these categories needs to satisfy the basic axioms of a homology theory, and these are all we use in the calculation. What is both interesting and fortuitous is that the argument for computing the homology of these spaces can be reworked to also identify their homotopy type.  There do exist spaces that have the same homology as a sphere, but are not in fact weak equivalent to spheres. 
\end{rem}

%\glen{ The lucky thing is that the same kind of argument can be extended to identify the homotopy types too. \tovo{Should we add this as a remark?} \rakhi{I also think this observation can be added as a remark, probably with rephrasing the last line.}}

We end this section by determining the singular homology for the $q$-matroid complex in \cite[Example 6.13]{GPR21}. This $q$-complex was shown to not satisfy the hypothesis of \cite[Theorem 6.10]{GPR21}, but we show that it is possible to `rearrange' its facets to apply the technique in \cite{GPR21} for determining singular homology.  
 \begin{exa}\label{exa:main}
Consider the field extension $\mathbb{F}_{2^4}/\mathbb{F}_{2}$ of degree $4$, and let $a$ be a root in $\mathbb{F}_{2^4}$ of the irreducible polynomial $X^4 + X+ 1$ in $\mathbb{F}_{2}[X]$ so that $\mathbb{F}_{2^4} = \mathbb{F}_{2}(a)$. 
Let $C$ be the rank metric code of length 4 over the extension $\mathbb{F}_{2^4}$ of $\mathbb{F}_{2}$ such that a generator matrix of $C$ is given by 
\[ 
G:= %\left(\begin{array}{cccc}
\begin{pmatrix}
a^{2} + a + 1 \ & a^{2} & a^{3} + a + 1 \ & a^{3} + a^{2} + a + 1 \\
a^{2} + a + 1 & a^{3} + 1 \ & a & a + 1 \\
a^{2} + 1 & 1 & a^{2} + 1 & a^{3} + 1
\end{pmatrix}.
%\end{array}\right) 
\]
Let $\Delta_C$ be the $q$-matroid complex on $\mathbb{F}_{2}^4$ associated to $C$ as in Example 2.9.
% as in Example~\ref{exa:rankmetricmatroids}. 
Then 
$\dim \Delta_C = \mathop{\rm rank}(G)=3$. There are ${{4}\brack{3}}_2 = 15$ subspaces of $\mathbb{F}_{2}^4$ of dimension $3$ and it turns out that 14 among these are in $ \Delta_C$. In the shelling order of Definition 3.5, these 14 facets of $ \Delta_C$, say $F_1, \dots , F_{14}$, can be explicitly listed as follows. 
$$
\begin{array}{l}
%\! \! \! \! \!
\langle \mathbf{e}_2, \, \mathbf{e}_3, \, \mathbf{e}_4 \rangle,  \ \,
\langle \mathbf{e}_1 + \mathbf{e}_2, \, \mathbf{e}_3, \,  \mathbf{e}_4 \rangle, \ \, 
\langle \mathbf{e}_1, \,  \mathbf{e}_2, \, \mathbf{e}_4 \rangle, \ \, 
\langle \mathbf{e}_1 + \mathbf{e}_3, \,  \mathbf{e}_2, \,  \mathbf{e}_4 \rangle, \ \, 
\langle \mathbf{e}_1,  \, \mathbf{e}_2 + \mathbf{e}_3, \, \mathbf{e}_4 \rangle, \\
%\! \! \! \! \!
\langle \mathbf{e}_1 + \mathbf{e}_3,  \, \mathbf{e}_2 + \mathbf{e}_3, \, \mathbf{e}_4 \rangle, \ \, 
\langle \mathbf{e}_1, \, \mathbf{e}_2, \, \mathbf{e}_3 \rangle,  \ \, 
\langle \mathbf{e}_1 + \mathbf{e}_4, \, \mathbf{e}_2, \,  \mathbf{e}_3 \rangle, \ \, 
\langle \mathbf{e}_1, \,  \mathbf{e}_2 + \mathbf{e}_4 , \, \mathbf{e}_3 \rangle, \\
%\! \! \! \! \!
\langle \mathbf{e}_1 + \mathbf{e}_4,  \,  \mathbf{e}_2 +  \mathbf{e}_4, \,  \mathbf{e}_3 \rangle, \ \, 
\langle \mathbf{e}_1 ,  \, \mathbf{e}_2 , \, \mathbf{e}_3 + \mathbf{e}_4 \rangle, \  \, 
\langle \mathbf{e}_1 + \mathbf{e}_4, \, \mathbf{e}_2, \, \mathbf{e}_3 + \mathbf{e}_4 \rangle, \\
%\! \! \! \! \!
\langle \mathbf{e}_1 , \,  \mathbf{e}_2 +  \mathbf{e}_3, \,    \mathbf{e}_3  + \mathbf{e}_4 \rangle, \ \, 
\langle \mathbf{e}_1  + \mathbf{e}_4,  \, \mathbf{e}_2 +  \mathbf{e}_4 , \,    \mathbf{e}_3  + \mathbf{e}_4 \rangle, 
\end{array}
$$
where for $1\le i \le 4$, by $\mathbf{e}_i$ we have denoted the element of $\mathbb{F}_{2}^4$ with $1$ in the $i$th position and $0$ elsewhere. We can take 
a generator matrix of $F_j$ to be 
the $3\times 4$ matrix $Y_j$, which has as its rows the elements of the given ordered basis of $F_j$, and it can be checked that the rank of  the $3 \times 3$ matrix $GY_j^T$ is indeed $3$ for each $j=1, \dots , 14$. Incidentally, %On the other hand,
 the only $3$-dimensional subspace of $\mathbb{F}_{2}^4$ missing in the above list is $F:=\langle \mathbf{e}_1, \, \mathbf{e}_3, \, \mathbf{e}_4 \rangle$ and %one can check that 
 its generator matrix $Y$ has the property that 
 rank$(GY^T)=2$; indeed, 
$$
Y = \begin{pmatrix}
1 & 0 & 0 & 0 \\
0 & 0 & 1 & 0 \\
0 & 0 & 0 & 1
\end{pmatrix}
\quad
\text{and} \quad
(GY^T)
\begin{pmatrix}
1 \\  a^{3} + a^{2} + a + 1 \\  a^{2} + a 
\end{pmatrix}
= 
\begin{pmatrix}
0 \\ 0 \\ 0
\end{pmatrix}.
$$  
 \end{exa}
 We consider a new ordered basis $\beta := (\mathbf{e}^{\prime}_1, \mathbf{e}^{\prime}_2, \mathbf{e}_3, \mathbf{e}_4)$ of $\F_2^4$, where $\mathbf{e}^{\prime}_1 = (0, 1, 0 ,0)$ and $\mathbf{e}^{\prime}_2 = (1, 0, 0 ,0)$. With respect to the new basis, the space $F$, the only $3$-dimensional space missing in the list of facets of $\Delta$, is $\langle \mathbf{e}^{\prime}_2, \mathbf{e}_3, \mathbf{e}_4\rangle$. 
 
 %\textbf{\color{blue}An attempt to computing the homology for the above $q$-complex by considering a new ordered basis of $\F_2^4$}

%\begin{rem}
%Note that the tower decomposition of any space $U \subseteq \Fq^n$ is independent of any choice of basis for $\Fq^n$ and therefore the total order on $\mathbb{G}_3(\F_2^4)$ is same for any basis we take for $\F_2^4$. Hence it is not possible to rearrange the $3$-dimensional subspaces of $\F_2^4$ by considering a change of basis.
 %\end{rem}
 \begin{pro}
 Let $\Delta_{C}$ be the $q$-complex of dimension $3$ as in Example \ref{exa:main}.
 Then 
\[
\widetilde{H_p}(\ring{\Delta_{C}}) = \begin{cases}
%\widetilde{H_{p-1}}(\ring{S}_q^{d-1})^{\oplus 7} 
\ZZ\strut^{7 q^{3}}& \text{if } p=2,\\
0 & \text{otherwise.} 
\end{cases}
\] 
 \end{pro}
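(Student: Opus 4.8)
The plan is to apply the counting machinery developed in this section—specifically the formula in the unnumbered theorem preceding the proposition—to the $q$-matroid complex $\Delta_C$ of Example \ref{exa:main}, after re-ordering the facets so that the missing $3$-space $F$ behaves like the ``minimum'' space with respect to a suitable lexicographic shelling. First I would fix the new ordered basis $\beta = (\mathbf{e}'_1, \mathbf{e}'_2, \mathbf{e}_3, \mathbf{e}_4)$ introduced just above, so that in the new coordinates the missing facet is $F = \langle \mathbf{e}'_2, \mathbf{e}_3, \mathbf{e}_4\rangle$, and note that $\langle \mathbf{e}'_1 \rangle$ is the minimum nonzero vector of $\Fq^4$ in the lexicographic order attached to $\beta$. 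Since $\Delta_C$ contains every $1$-dimensional subspace (it has dimension $3 \geq 1$ and is a $q$-matroid complex on $\Fq^4$, hence the uniform $q$-complex in low dimensions), the minimum nonzero vector $x$ of $\Delta_C$ is $\mathbf{e}'_1$. The key point to verify is that with respect to the lexicographic shelling $\prec_q$ coming from $\beta$, the facet containing $\langle \mathbf{e}'_1 \rangle$ that we must exclude is exactly $F$—but $F$ is not a facet of $\Delta_C$, so in fact \emph{every} facet of $\Delta_C$ fails to contain $\mathbf{e}'_1$. Wait: I should double-check which facets contain $\mathbf{e}'_1 = \mathbf{e}_2$; several of the fourteen listed facets do contain $\mathbf{e}_2$. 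So the correct statement is that the value $s$ (number of initial facets containing the global minimum vector $x = \mathbf{e}'_1$) is whatever it is, and the surviving contribution comes from the facets $F_j$ not containing $\mathbf{e}'_1$ together with the count $r_j$.

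The main computational step is then to determine, for the re-ordered list of facets $F_1, \ldots, F_{14}$ (sorted by $\prec_q$ relative to $\beta$), the integer $s$ and the integers $r_j$ for $s+1 \leq j \leq 14$, where $r_j = |\{F_i \cap F_j : \dim(F_i \cap F_j) = k-1,\ F_i \prec_q F_j,\ x_j \notin F_i \cap F_j\}|$ with $k = 3$ and $x_j$ the minimum nonzero vector of $F_j$. By the unnumbered theorem, the number of maximal chains $\UU$ with $\R(\UU) = \UU$ is $q^{(k-1)(k-2)/2}\sum_{i=s+1}^{14} r_i = q^{1}\sum_{i=s+1}^{14} r_i = 2\sum_{i=s+1}^{14} r_i$, and the target rank $7q^3 = 56$ forces $\sum_{i=s+1}^{14} r_i = 28$. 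So I would tabulate the fourteen $3$-dimensional subspaces of $\Fq^4$ (equivalently, the fourteen hyperplanes, identified with nonzero vectors in the dual up to scalar, i.e. with $\PP(\Fq^4)^\vee$), compute their pairwise intersections (each pair of distinct hyperplanes meets in a $2$-space), sort them in $\prec_q$-order, and for each $F_j$ count how many of its seven $2$-dimensional subspaces arise as $F_i \cap F_j$ for some earlier $F_i$ and avoid $x_j$. Then I would invoke the corollary following the unnumbered theorem (which cites \cite[Theorem 7.7.2]{BJO92}) to conclude $\widetilde{H_p}(\ring{\Delta_C}) = \ZZ^{2\sum r_i}$ for $p = k-1 = 2$ and $0$ otherwise, and finally the weak homotopy equivalence of Theorem \ref{thm:main} / diagram \eqref{eq:ho_cats} transfers this to the singular homology of the topological space associated to $\Delta_C$.

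The hard part will be the bookkeeping: correctly sorting fourteen subspaces under the somewhat delicate lexicographic order $\prec_q$ (Definition \ref{def:ordering}), which depends on minima of set differences $\min U \setminus V$ with the nonstandard order $0 \prec 1 \prec$ (everything else) on $\Fq$, and then accurately computing $r_j$ for each facet—this requires knowing, for each $2$-dimensional subspace $W$ of a given facet $F_j$, whether $W$ sits inside some earlier facet $F_i$ (equivalently whether $W = F_i \cap F_j$, equivalently whether the unique $3$-space $W \oplus \langle x \rangle$ for an appropriate $x$ is a facet, using that $\Delta_C$ is uniform in dimension $\leq 2$ but \emph{not} in dimension $3$ since $F$ is missing). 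The subtlety introduced by the missing facet $F$ is precisely what makes this example fall outside \cite[Theorem 6.10]{GPR21}: some $2$-space $W$ contained in $F$ will be a codimension-$1$ subspace of several actual facets but the ``obvious'' extension $W \oplus \langle x \rangle$ might equal $F$, which is not available. I expect that after the re-ordering, $F$ effectively plays the role of the forbidden ``first facet'' in the proof of Proposition \ref{pro:4.9}, so that the relevant count of facets not containing the special vector, times $q^{k(k-1)/2} = q^3 = 8$ possible completions of the lower part of the chain, divided appropriately, yields $7 \cdot q^3$; concretely one expects $7$ facets to contribute a full $q^3$-worth of good chains each after accounting for the missing $F$. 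I would present the final tally in a short table rather than grinding through all fourteen cases in prose, and then state the conclusion.
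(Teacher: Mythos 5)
Your route is genuinely different from the paper's, and as written it is not yet a proof. The paper does not invoke the chain-counting theorem of this section at all: it splits the fourteen facets according to whether they contain $\mathbf{e}_2$. The subcomplex $\Delta'$ generated by the seven facets containing $\mathbf{e}_2$ is a cone over $\langle\mathbf{e}_2\rangle$, so its punctured poset is contractible; for each of the seven facets $F_j$ with $\mathbf{e}_2\notin F_j$, every $2$-dimensional $U\subset F_j$ satisfies $U\oplus\langle\mathbf{e}_2\rangle\neq\langle\mathbf{e}_1,\mathbf{e}_3,\mathbf{e}_4\rangle$ (the unique nonface does not contain $\mathbf{e}_2$), hence $U\oplus\langle\mathbf{e}_2\rangle$ is a facet of $\Delta'$ and $U\in\Delta'$. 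Thus each such $F_j$ is glued along the full punctured $q$-sphere of its proper nonzero subspaces, whose only reduced homology is $\ZZ^{q^3}$ in degree $1$, and Mayer--Vietoris (as in the proof of Theorem 5.7 of \cite{GPR21}) adds a summand $\ZZ^{q^3}$ to $H_2$ at each of the seven steps, giving $\ZZ^{7q^3}$. Using the lexicographic counting theorem instead is a legitimate alternative idea, but the substance of the proposition is precisely the count you postpone.

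Concretely, the gaps are these. First, you never compute $s$ or the $r_j$; you obtain $\sum r_i=28$ by back-solving from the statement to be proved, which is circular. Second, the coordinate set-up is incorrect: with the paper's convention (smallest index of disagreement decides, $0\prec 1\prec\cdots$) the minimum nonzero vector of $\FF_2^4$ is $\mathbf{e}_4$, both in the standard basis and in your basis $\beta$, since $\beta$ only permutes the first two coordinates; $\mathbf{e}'_1$ is far from minimal, so your identification of $s$ and of the contributing facets is wrong. Third, the deferred bookkeeping is exactly where the difficulty lies: if one runs the formula literally in the standard coordinates, the minimum vector is $\mathbf{e}_4$, one gets $s=6$ and eight facets $F_7,\dots,F_{14}$ not containing $\mathbf{e}_4$, and for each of them one finds $r_j=4$ (the only dangerous hyperplane, $F_j\cap\langle\mathbf{e}_1,\mathbf{e}_3,\mathbf{e}_4\rangle$, always contains $\min F_j$ and is excluded, while the remaining four hyperplanes avoiding $\min F_j$ all extend by $\mathbf{e}_4$ to one of $F_1,\dots,F_6$), so the formula yields $q\cdot 32=64$ rather than $7q^3=56$. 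To land on $56$ by your method one must work in coordinates in which the minimum nonzero vector is a vector, such as $\mathbf{e}_2$, lying outside the unique nonface --- i.e.\ $\mathbf{e}_2$ must be made the \emph{last} vector of the ordered basis, not the first --- and one must then justify lexicographic shellability for the transported order and actually verify that each of the seven contributing facets has $r_j=4$. None of this is done, and the heuristic that ``$7$ facets contribute a full $q^3$-worth each'' is precisely the assertion requiring proof; so the proposal has a genuine gap, and its naive execution does not even reproduce the stated answer.
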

 
 \begin{proof}
 Let $J' := \{j \in \{1,\ldots, 14\}~|~ \mathbf{e}_2 \in F_j\}$ and $J := \{1, \ldots, 14\} \setminus J'$. Since all the $3$-dimensional spaces of $\F_2^4$ except $\langle \mathbf{e}_1, \mathbf{e}_3, \mathbf{e}_4\rangle$ are facets of $\Delta_{C}$, we have $|J'| = {{3}\brack{2}}_2 = 7$ and $|J|= 7$. Consider the subcomplex $\Delta^{\prime} := \langle F_j ~ | ~ j \in J'\rangle$. Since $\mathbf{e}_2$ is contained in all the facets of $\Delta^{\prime}$, the punctured $q$-complex $\ring{\Delta}^{\prime}$ is contractible (by \cite[Lemma 5.3]{GPR21}). 
 
 For any $j \in J$, let $U$ be a $2$-dimensional subspace of $F_j$. Since $\mathbf{e}_2 \notin F_j$, $\dim_{\F_2} U \oplus \langle \mathbf{e}_2 \rangle = 3$. Therefore, $U \oplus \langle \mathbf{e}_2 \rangle = F_s$ for some $s \in J'$ and hence $U \in \Delta^{\prime}$. This implies that $\ring{\Delta}^{\prime} \cup \ring{\Sigma}(F_{j}) = \ring{S}_q^{2}$ for any $j \in J$. Recall that 
 \[
\widetilde{H_p}(\ring{S}_q^{2}) = \begin{cases} 
\ZZ\strut^{q^{3}}& \text{if } p=1,\\
0 & \text{otherwise.} 
\end{cases}
\] 
Now using the suitable Mayer-Vietoris sequences and proceeding as in the proof of \cite[Theorem 5.7]{GPR21}, we obtain the desired result about the reduced homology groups of $\Delta_{C}$.
 \end{proof}
 
 \begin{rem}
 It is indeed possible to `rearrange' the $3$-dimensional subspaces of $\F_2^4$ such that the only nonface appears at the end. Corresponding to the total order we considered on the Grassmannian $\mathbb{G}_3(\F_2^4)$, the space $\langle \mathbf{e}_1, \mathbf{e}_2, \mathbf{e}_3 \rangle$ is the one that appears at the end. In our example $\langle \mathbf{e}_1, \mathbf{e}_3, \mathbf{e}_4 \rangle$ is the nonface. Considering an isomorphism of $\F_2^4$ which sends $\langle \mathbf{e}_1, \mathbf{e}_3, \mathbf{e}_4 \rangle$ to $\langle \mathbf{e}_1, \mathbf{e}_2, \mathbf{e}_3 \rangle$, we rearrange the $3$-dimensional subspaces so that the nonface is now moved to the end for the new $q$-complex which is isomorphic to the one we considered in the example. Note that the posets and the order topology corresponding to two isomorphic $q$-complexes are also isomorphic.  
 \end{rem}

\section{Conclusion and open question}\label{sec:5}
In this paper, we proved that shellable $q$-complexes have homotopy type of wedge sum of $q$-spheres. Moreover, we determined explicitly the singular homology groups of a class of shellable $q$-complexes that we call \emph{lexicographically shellable} $q$-complexes. This completes the study in \cite{GPR21}, whereas we leave open the question of shellability of order complexes of \emph{any} shellable $q$-complexes. 

Moreover, looking back at the motivation behind the work in \cite{GPR21} from a coding theory perspective, an unresolved question remains: What is the connection between the generalized rank weights of a vector rank metric code and the singular homology of the associated $q$-matroid complex? More details on this inquiry can be found in \cite[Introduction]{GPR21}. We must mention that a recent study \cite{JPV} has indeed established a link between the generalized rank weights of a rank metric code and a set of invariants, called as 'Virtual Betti numbers', of the associated $q$-matroids. These virtual Betti numbers are in fact the Betti numbers of the Stanley-Reisner rings of the classical matroids corresponding to the $q$-matroids \cite{JPV}. Thus, the question of relating the singular homology computed in this paper to the generalized rank weights is reduced to identifying the missing connection: How does the singular homology of a $q$-matroid complex relate to the $\mathbb{N}$-graded Betti numbers of the Stanley-Reisner rings of its associated classical matroid?

\bibliographystyle{plain}
\bibliography{bibliography}

% \begin{thebibliography}{AAAA}
	
%	\addtocontents{toc}{\hfill{}\\{\bf References} \hfill{\bf \thepage}\\}
%	\thispagestyle{empty}
%	\bibitem{GPR21}
%S. R. Ghorpade, R, Pratihar, and T. H. Randrianarisoa, \emph{Shellability and homology of $q$-complexes and $q$-matroids}, \href{arXiv:2102.13102}{https://arxiv.org/abs/2102.13102}.
%	\end{thebibliography}

\end{document}